\newtheorem{theorem}{Theorem}[section]
\newtheorem{lemma}[theorem]{Lemma}
\newtheorem{corollary}[theorem]{Corollary}
\newtheorem{conjecture}[theorem]{Conjecture}
\theoremstyle{definition}
\newtheorem{definition}[theorem]{Definition}
\numberwithin{equation}{section}
\newtheorem{Thm}{Theorem}[section]
\newtheorem{Lem}[Thm]{Lemma}
\newtheorem{Pro}[Thm]{Proposition}
\newtheorem{Cor}[Thm]{Corollary}   
\newtheorem{Def}[Thm]{Definition} 
\newenvironment{Pf}[1]
{\trivlist\item[]{\it #1\@. }}{\hspace*{\fill}$\Box$\endtrivlist}
\newcommand{\R}{\mathbb{R}}
\newcommand{\Z}{\mathbb{Z}}
\renewcommand{\epsilon}{\varepsilon}
\renewcommand{\rho}{\varrho}
\long\def\forget#1\forgotten{} 
\begin{document}














 
\title{Geodesic nets on non-compact Riemannian manifolds}
                                             
\author[Chambers, Liokumovich, Nabutovsky, Rotman]{ Gregory R. Chambers, Yevgeny Liokumovich, Alexander Nabutovsky and Regina Rotman}
\date{\today}
\maketitle

\begin{abstract}
A geodesic flower is a finite collection of geodesic loops based at the same point $p$ that satisfy the following balancing condition: The sum of all unit tangent vectors to all geodesic arcs meeting at $p$ is equal to the zero vector. In particular, a geodesic
flower is a stationary geodesic net.

We prove that in every complete non-compact
manifold with locally convex ends
there exists a non-trivial geodesic flower.

\end{abstract}

\medskip\noindent
\section{Main results.}
\medskip

We begin with the following definition:
\begin{Def}
Let $M$ be a complete non-compact Riemannian manifold, and $\delta$ a positive number. Assume that there exists a finite collection of disjoint connected closed piecewise smooth hypersurfaces $\Sigma_i$, $i=1,2,\ldots$,
that divide $M$ into a disjoint union
of open submanifolds $M_0$, $M_1$, \dots, $M_e$ 
(for some integer $e$) 
so that:
\begin{enumerate}
    \item $M_0$ is bounded, and $M_i$ are unbounded for all $i>1$;
    \item If $i>0$, then the boundary of $M_i$ is $\Sigma_i$. The boundary of $M_0$ is the union of
all hypersurfaces $\Sigma_i$;
    \item (Locally convex ends condition) For each $i\geq 1$ there exists a positive $\epsilon$
such that each minimizing geodesic connecting every pair of $\delta$-close points of $\Sigma_i$ in $M$ is, in fact, in
$M_i \cup \Sigma_i$.
\end{enumerate}

Then we say that $M$ is a Riemannian manifold with $\delta$-locally convex ends, and
$M_0$ its core. If $M$ has $\delta$-locally convex ends for some $\delta>0$, we say that $M$ has locally convex ends.
\end{Def}


Informally speaking, this condition means that each end of $M$ can be cut off by a smooth bounded
hypersurface that is locally convex to infinity.

We conjecture that:

\begin{conjecture} 
There exists a non-constant periodic geodesic on each complete non-compact Riemannian manifold 
with locally convex ends.
\end{conjecture}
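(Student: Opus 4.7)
The plan is to adapt the Birkhoff--Lyusternik--Fet min-max scheme for closed geodesics to the non-compact setting, with the locally convex ends condition providing the compactness input that is automatic in the compact case. The natural starting point is the authors' existence theorem for a non-trivial geodesic flower, which already establishes that the length functional has rich critical structure on a space of bouquets based at a point $p$; the task is to upgrade a flower to a smooth periodic geodesic.

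First, I would establish a \emph{trapping lemma}: for every $L > 0$ there is a compact subset $K_L \subset M$ such that every closed geodesic of length at most $L$, and more generally every curve produced by Birkhoff curve-shortening applied to a piecewise geodesic loop of length at most $L$ that meets the core $M_0$, lies in $K_L$. The $\delta$-local convexity rules out short-cuts across the core: if a loop enters the end $M_i$ and re-emerges at $\delta$-close points of $\Sigma_i$, Birkhoff replacement on that subarc either stays in $M_i \cup \Sigma_i$, in which case the penetration depth is controlled by the total length budget, or shortens the curve while leaving the endpoints essentially fixed. An induction along the finite collection of end subarcs comprising the loop should then give a uniform bound on how far any Birkhoff-shortened curve of length at most $L$ can reach into any end.

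Second, I would set up the min-max on the free loop space $\Lambda M$, or on the based loop space $\Omega_p M$ for $p \in M_0$. If some $\pi_k(M) \ne 0$ with $k \ge 1$, a sweepout representing a non-trivial class produces a positive min-max width $W > 0$; by the trapping lemma applied with $L = 2W$, a min-max sequence stays in a compact set and a convergent subsequence yields a non-constant closed geodesic. If $M$ is contractible, I would use the geodesic flower as the base of a two-parameter sweepout, rotating the balanced tangent configuration at $p$ through a one-parameter deformation that unfolds adjacent petals pairwise; the balancing condition should ensure that at the critical deformation parameter the resulting curve is smooth at $p$ and hence a genuine closed geodesic rather than another flower.

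The hardest step will be the trapping lemma, specifically extending the $\delta$-local convexity into a global statement controlling curves that oscillate between the core and several ends. The hypothesis only forbids short-cuts across the core between $\delta$-close boundary points; it does not directly prevent a curve from drifting to infinity along an end via many moderate excursions. One would need to identify a monotone quantity under Birkhoff shortening---perhaps a weighted sum over end subarcs of a penalty depending on penetration depth---that forces curves of bounded length to remain in a bounded region. Constructing such a monotone quantity under only the stated $\delta$-local hypothesis, or determining whether the conjecture in fact requires a strengthening of the end condition, is the central difficulty this approach must confront.
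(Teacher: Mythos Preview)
The statement you are attempting to prove is labelled a \emph{conjecture} in the paper and is not proved there. The paper's main theorem establishes only the existence of a non-trivial geodesic \emph{flower}, which is strictly weaker; the authors note that Bangert settled the two-dimensional case by methods that are ``strictly two-dimensional,'' and the higher-dimensional case remains open. So there is no proof in the paper to compare against, and your proposal should be read as an attempt at an open problem.

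There are two genuine gaps in your plan.

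First, the trapping lemma is almost certainly false in the generality you state it. The paper says explicitly that ``the locally convex ends condition does not prevent a sliding of cycles in the space of free loops to infinity, but prevents movement of free loops from infinity to the `compact core' of the manifold.'' The $\delta$-local convexity is oriented outward: a curve that lies entirely in an end $\bar M_i$ stays there under Birkhoff shortening, but nothing stops a curve that initially meets $M_0$ from being pushed out into an end and then drifting to infinity. The paper's own argument for the flower theorem is built precisely around this dichotomy (contract to a point in the core, or escape the core forever), and the escape alternative is essential, not a defect to be ruled out. Your proposed monotone penalty quantity would have to overcome exactly this, and you give no candidate; you yourself flag this as the central difficulty, and it is.

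Second, the upgrade from a flower to a periodic geodesic is not an argument. The balancing condition $\sum_j v_j = 0$ at the base point is much weaker than smoothness, which would require the tangent vectors to pair off as $v,-v$. Your ``one-parameter deformation that unfolds adjacent petals pairwise'' is not specified, and there is no mechanism offered by which the balancing condition would force a critical configuration of such a deformation to be a smooth closed geodesic rather than another flower. In the contractible case you have no homotopically non-trivial class in the loop space to run min-max on, and a ``two-parameter sweepout based at the flower'' is not defined; without a non-trivial cycle and a Palais--Smale-type compactness (which your trapping lemma was supposed to supply), the min-max value need not be realised by any critical point at all.
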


\par\noindent

To put this conjecture in proper perspective note that the well-known Fet-Lyusternik theorem asserts
that there exists a periodic geodesic on each closed Riemannian manifold. However, this is no longer
true in the non-compact case. The most obvious counterexamples are Euclidean
spaces (or, more generally, $\R^n$ endowed with any Riemannian metric of non-positive curvature.)
However, it is easy to exhibit examples with non-trivial topology. For example, one can
consider a surface of revolution in $\R^3$ obtained by rotating the hyperbola
$z={\frac{1}{x}}, x>0$ in the $XZ$-plane about the $Z$-axis. The fundamental group of this surface is $\Z$,
yet it is easy to see (using, for example, Clairaut's theorem) that there are no non-constant
periodic geodesics on this surface. It is easy to see why the standard existence proof developed
for the compact case does not work in this case: When we take a non-contractible closed
curve and start shrinking it, the curve slides to infinity, and does not converge to any
limiting closed curve.

Our main result asserts that the assumptions of this conjecture imply the existence of a closed geodesic net on the manifold, and, moreover of a geodesic flower.

\forget

\begin{theorem} If $\tilde M^n$ is the Riemannian universal covering of a closed Riemannian
manifold $M^n$, and $\tilde M^n$ has locally convex ends, then there exists a non-trivial
contractible periodic geodesic on $M^n$.
\end{theorem}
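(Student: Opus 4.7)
The plan is to reduce the problem to finding a single non-constant closed geodesic in the universal cover $\tilde M^n$. Since $\tilde M^n$ is simply connected, any closed geodesic $\tilde\gamma\subset\tilde M^n$ projects under the covering map $\pi\colon\tilde M^n\to M^n$ to a closed geodesic of the same length in $M^n$, and this projection is contractible in $M^n$ because its lift starting at $\tilde\gamma(0)$ closes up in $\tilde M^n$. Thus it suffices to produce a non-constant closed geodesic in $\tilde M^n$. The extra ingredient available here, beyond the hypotheses of the main flower theorem, is the cocompact, free, isometric action of $\Gamma=\pi_1(M^n)$ on $\tilde M^n$.

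I would attempt this via a Birkhoff-style min-max in the based loop space $\Omega\tilde M^n$. Since $\tilde M^n$ is a universal cover of the closed manifold $M^n$, one has $\pi_k(\tilde M^n)=\pi_k(M^n)$ for every $k\ge 2$, and hence $\pi_{k-1}(\Omega\tilde M^n)=\pi_k(M^n)$. Provided $M^n$ is not aspherical, this furnishes a non-trivial homotopy class of sweepouts of loops in $\tilde M^n$ over which to min-max the length functional. The locally convex ends condition is what keeps a min-max sequence bounded: a loop venturing deep into an end $M_i$ can be shortened by replacing portions whose endpoints lie on $\Sigma_i$ with short minimizing geodesics of $M$, which by hypothesis stay inside $M_i\cup\Sigma_i$; iterating, the sweepout can be assumed to lie in a bounded neighborhood of the core $M_0$. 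Standard compactness and Morse-theoretic arguments then extract a critical point of the length functional -- a non-constant closed geodesic in $\tilde M^n$ -- at the min-max level, which is positive thanks to non-triviality of the class.

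The main obstacle is the aspherical case: if $\pi_k(M^n)=0$ for all $k\ge 2$, then $\Omega\tilde M^n$ is weakly contractible and the above topological scheme collapses. Two routes seem promising. The first is to show that the locally convex ends condition on the universal cover of a closed manifold is actually incompatible with asphericity of $M^n$ -- the natural examples of non-compact $\tilde M^n$ with locally convex ends (such as $S^2\times\R$ covering $S^2\times S^1$) involve non-trivial $\pi_k(M^n)$ for some $k\ge 2$, and one could try to promote this observation to a theorem. The second, more direct route is to use the geodesic flower $F$ at $\tilde p$ provided by the main theorem together with deck translates $g\cdot F$ based at points $g\tilde p$ near a petal of $F$, and to concatenate and slide in the style of Bangert's work on closed geodesics so as to fold the multi-petal configuration into a single closed curve, which then shortens to a closed geodesic. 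The delicate point in either route is to ensure that the limiting object is a non-constant closed geodesic, rather than another geodesic flower or a curve collapsing to a point, while controlling the shortening process near the ends.
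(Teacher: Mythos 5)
Your reduction to producing a non-constant closed geodesic in $\tilde M^n$, and your treatment of the non-aspherical case, are essentially fine (although two details are off: min-max in the \emph{based} loop space $\Omega\tilde M^n$ produces geodesic \emph{loops} with a corner at the base point, not closed geodesics, so you need the free loop space --- and then the classical Fet--Lyusternik argument on the closed manifold $M^n$ itself, applied to $\Lambda_{contr}M^n$, already gives the result without any use of the ends condition; also your confinement argument misreads the convexity: the locally convex ends condition makes the \emph{ends} convex, so shortening keeps curves inside the ends --- it does not push a sweepout back toward the core, and indeed the condition does not prevent sliding to infinity). The genuine gap is the aspherical case, which is the entire content of the theorem, and there you offer only speculation. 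Route (1) --- that locally convex ends of $\tilde M^n$ might be incompatible with asphericity of $M^n$ --- is an unsupported conjecture, and the motivating examples for this statement (e.g.\ tori and other closed aspherical manifolds, whose universal covers are the case where $\Lambda_{contr}M^n$ is contractible and standard Morse theory gives nothing) are exactly the ones the theorem is meant to address, so one cannot expect vacuity. Route (2) is precisely the hard step: converting a geodesic flower into a closed geodesic is the open difficulty that forces the paper's main theorem to conclude only with flowers, and ``concatenate deck translates and slide in the style of Bangert'' is not an argument --- Bangert's technique is strictly two-dimensional, and no mechanism is given to prevent the folded configuration from collapsing or from converging to another flower.

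The argument intended for this statement is quite different and makes essential quantitative use of the covering-space hypothesis. One runs the same ``impossible filling'' scheme as for the main theorem, but with a \emph{weighted} length functional on cages, the edges being weighted by powers of a large constant $a\geq 3$. The merging lemma shows every critical point of the weighted functional is a geodesic flower, and the ``wide loop'' estimate shows that at such a critical flower the petal carrying the largest weight has base-point angle $\alpha$ with $\pi-\alpha=O(1/a)$, i.e.\ it is nearly a periodic geodesic. Because $\tilde M^n$ covers a closed manifold, every geodesic loop can be translated by the deck group into a fixed fundamental domain, and a compactness argument then shows: if there were \emph{no} non-constant periodic geodesics (equivalently, no contractible ones downstairs), then all geodesic loops of length $\leq L$ would have angle at most some $\alpha_0(L)<\pi$. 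Choosing $a$ large compared to this gap forces the top-weighted petal to collapse; re-weighting with successively larger values of $a$ collapses the petals one by one, so every cage can be filled, contradicting the non-extension lemma (Lemma 2.1). It is exactly this uniform angle gap, available only because of cocompactness of the deck action, that substitutes for the missing ``flower to closed geodesic'' step, and it is absent from your proposal.
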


One case, when $\tilde M^n$ has locally convex ends is that the domain $K^n$ exists already in $M^n$.

\begin{corollary} Let $M^n$ be a closed manifold, and $K^n$ a simply-connected open domain in $M^n$
such that its (not necessarily connected) boundary $\partial K^n$ is piecewise-smooth. Assume that for each pair of
sufficiently close in $M^n$ points $x,y$ of $\partial K^n$ the minimizing geodesic between them
is outside of $K^n$. Then there exists a non-trivial contractible periodic geodesic in $M^n$.
\end{corollary}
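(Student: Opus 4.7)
The plan is to deduce this corollary from the preceding theorem by pulling back the hypothesis to the Riemannian universal cover $\pi : \tilde M^n \to M^n$. If $\pi_1(M^n)$ is finite, then $\tilde M^n$ is closed, so the classical Fet--Lyusternik theorem already yields a non-constant periodic geodesic on $\tilde M^n$; since $\tilde M^n$ is simply connected, its projection to $M^n$ is a closed loop whose lift to $\tilde M^n$ closes up, hence is contractible in $M^n$. So I would assume $\pi_1(M^n)$ is infinite and $\tilde M^n$ is non-compact.

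Next, I would exhibit $\tilde M^n$ as a manifold with $\delta$-locally convex ends. Since $K^n$ is simply-connected and open, $\pi^{-1}(K^n)$ is a disjoint union of sheets, each mapped homeomorphically and locally isometrically onto $K^n$ by $\pi$. Pick one sheet $\tilde K$; it is relatively compact, and its boundary $\partial \tilde K$ is a piecewise smooth lift of $\partial K^n$ with the same (finite) number of components. I would take $M_0$ to be the union of $\tilde K$ with all bounded components of $\tilde M^n \setminus \overline{\tilde K}$, and declare the unbounded components to be the ends $M_1, \ldots, M_e$. After possibly regrouping components so that each boundary hypersurface $\Sigma_i$ is connected, conditions (1) and (2) of the definition are satisfied: $M_0$ is bounded, every $M_i$ is unbounded, and the $\Sigma_i$ are disjoint closed piecewise smooth hypersurfaces lying in $\partial \tilde K$.

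To verify item (3) I would choose $\delta > 0$ smaller than both (a) the injectivity radius of the covering, so that every open $\delta$-ball in $\tilde M^n$ is mapped isometrically by $\pi$ onto its image, and (b) a threshold below which every pair of points in $\partial K^n$ at distance $< \delta$ in $M^n$ is joined by a unique minimizing geodesic lying in $M^n \setminus K^n$. Then for any $\tilde x, \tilde y \in \Sigma_i$ with $d_{\tilde M}(\tilde x, \tilde y) < \delta$, the minimizing geodesic between them in $\tilde M^n$ is the unique lift of the minimizing geodesic in $M^n$ between $\pi(\tilde x), \pi(\tilde y)$; this lift avoids $\pi^{-1}(K^n)$ and in particular avoids $\tilde K$. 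Being a short arc with endpoints on $\Sigma_i$ that does not cross $\partial \tilde K$, it lies in $M_i \cup \Sigma_i$. Having verified that $\tilde M^n$ has $\delta$-locally convex ends, the preceding theorem delivers a non-trivial contractible periodic geodesic on $M^n$.

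The main obstacle is the combinatorial book-keeping of how connected components of $\tilde M^n \setminus \overline{\tilde K}$ pair with components of $\partial \tilde K$. If $\partial K^n$ is disconnected, distinct components of $\partial \tilde K$ may bound the same unbounded region, or one component of $\partial \tilde K$ may bound several regions, so the naive one-end-per-hypersurface assignment fails. I would repair this by enlarging $M_0$ to absorb bounded pieces and by grouping regions that share boundary components into a single end, using the cocompactness of the $\pi_1(M^n)$-action on $\tilde M^n$ to see that only finitely many ends remain and that their boundary hypersurfaces can be arranged to match the definition. Once this regrouping is handled, the rest of the argument is routine verification.
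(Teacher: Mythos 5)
Your proposal is correct and takes essentially the same route as the paper: lift $K^n$ to an isometric copy $\tilde K^n$ in the universal cover, observe that small metric balls in $\tilde M^n$ project isometrically to $M^n$ so that the hypothesis on $\partial K^n$ transfers to $\partial\tilde K^n$ and gives $\tilde M^n$ locally convex ends, then apply the preceding theorem. Your additional treatment of the finite fundamental group case (via Fet--Lyusternik) and of the bookkeeping of ends and boundary components only supplies details that the paper's brief argument leaves implicit.
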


Indeed, one can lift $K^n$ to its isometric copy $\tilde K^n$ in $\tilde M^n$, and conisider
the boundary $\partial\tilde K^n$ that will be isometric to $\partial K^n$. As $x, y\in\partial\tilde
K^n$ will be in a small metric ball in $\tilde M^n$ isometric to its projection to $M^n$,
the minimizing geodesic between $x$ and $y$ will be outside of $\tilde K^n$ , and the previous theorem
implies the existence of a periodic geodesic on $M^n$. The corollary can be illustrated
by the following picture:

Note, that if $M^n$ is not aspherical, then the existence of the contractible periodic geodesic
can be derived via Morse theory on (non-contractible) space $\Lambda_{contr}M^n$ of contractible
closed curves on $M^n$ similarly to the proof of the classical Fet-Lyusternik theorem
asserting the existence of a non-trivial (not necessily contractible) periodic geodesic on
each closed Riemannian manifold. Yet if $M^n$ is aspherical, then $\Lambda_{contr}M^n$
is contractible, and the standard Morse-theoretic approach does not yield any critical points 
of the length functional. On the other hand, closed aspherical manifolds include not only
already quite interesting case of Riemannian tori $T^n$, but can have very diverse fundamental groups
(cf.[Davis]), and their universal covering spaces do not need to be homeomorphic to $\mathbb{R}^n$.
In fact,  aspherical manifolds can even have fundamental groups with unsolvable word problem - cf. [We],
Corolloary on p. 106. Yet any closed Riemannian manifold with such fundamental groups always have a contractible periodic geodesic - cf. [We], section 3.2.)

In the case of a general complete non-compact manifold we will prove here only the existence of geodesic
nets.
\forgotten

Recall, that geodesic nets on $M^n$ are critical points on the space of immersed (multi)graphs in $M^n$.
More formally, a geodesic net $N$ is a finite collection of points $v_i$ on $M^n$ (vertices of the
net) and (not necessarily different) smooth geodesics $\gamma_j$ starting and ending at vertices
$v_i$ of the net, where both ends of $\gamma_j$ can be the same vertex, so that for each
vertex $v_i$ the sum of the unit tangent vectors at $v_i$ to all geodesics $\gamma_j$ starting 
at $v_i$ is equal to the zero vector in $T_{v_i}M^n$.
Here we orient all the tangent vectors to $\gamma_j$ at $v_i$ from $v_i$ towards the other end
of $\gamma_j$. The geodesics $\gamma_i$ are called {\it edges} of a geodesic net. All periodic
geodesics are geodesic nets. (One can choose the corresponding multigraph as the graph with one vertex and one edge.)
Further, any union of a finite set of periodic geodesics is a geodesic net.
Yet a geodesic loop is a geodesic net if and only if it is a periodic geodesic. 
Similarly to periodic geodesics, geodesic nets are rare: for a generic 
Riemannian metric on a closed manifold the set of
geodesic nets is countable \cite{St}. Density and equidistribution
results for geodesic nets in Riemannian manifolds were proved recently in
\cite{LS} and \cite{LiS}.

Recently O. Chodosh and C. Mantoulidis showed \cite{CM}
that geodesic nets arising from Almgren-Pitts Min-Max theory
on surfaces are closed geodesics.
Very little is known about the existence of geodesic nets that are not unions of periodic geodesics. 
The only exception is a result of J. Hass and F. Morgan \cite{HM} asserting that for each convex Riemannian metric
on $S^2$ close to a round metric there exists a $\theta$-graph shaped geodesic net. (The underlying graph
consists of two vertices and three edges. In the geodesic net all edges must have non-zero length, and the stationarity condition
means that all angles between edges at each of two vertices are equal to ${2\pi\over 3}$.). On the other hand, it has been
proven in [NR] that the length of a shortest geodesic net on a closed Riemannian manifold $M^n$ does not exceed
$c(n)vol(M^n)^{1\over n}$, where $vol(M^n)$ denotes the volume of $M^n$, and $c(n)$ is an (explicit) constant that
depends only on the dimension. It also does not exceed $c(n) diam(M^n)$, where $diam(M^n)$ denotes the diameter of $M^n$.
Later it had been proven in [R2] (see also a later improvement in [R3]) that these results also hold for a special class of geodesic nets,  namely {\it geodesic flowers}.
A {\it geodesic flower} is a geodesic net that consists of a finite number of geodesic loops based at the same point $p$, and the sum
of all unit tangent vectors at $p$ (as usually directed from $p$) is equal to the zero vector (in the tangent space $T_pM^n$).
A geodesic flower is {\it non-trivial} if at least one of these geodesic loops is non-constant.

Here is our main theorem:

\begin{Thm} Let $M^n$ be a complete non-compact manifold with locally convex ends. Then there exists a
non-trivial geodesic flower on $M^n$.
\end{Thm}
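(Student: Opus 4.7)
The plan is to adapt the min-max / curve-shortening arguments that produce geodesic flowers on closed Riemannian manifolds, developed by Nabutovsky and Rotman in [NR], [R2], [R3], to the non-compact setting at hand. The essential new ingredient is the use of the locally convex ends condition to prevent the loss of compactness --- the same loss of compactness that allows, for example, the hyperboloid of revolution to admit no closed geodesics.

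Fix a basepoint $p$ in the core $M_0$. Following the closed-case strategy, I would set up a min-max problem for a class of sweepouts by bouquets of loops based at $p$, parametrized by a cube (or simplex) and trivial on its boundary. Define the width as the infimum, over sweepouts homotopic to a fixed non-trivial one, of the maximum over the parameter of the total length of the bouquet. In the closed case one shows this width is strictly positive --- and in fact bounded below in terms of the diameter or volume of the ambient manifold --- by a direct construction; I would run the analogous construction inside the compact manifold-with-boundary $\ol{M_0}$, taking advantage of the fact that $\ol{M_0}$ is a compact piece of $M$ on which the closed-case geometry applies.

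The main step is a Birkhoff-type discrete shortening of the sweepout, adapted so that all curves remain in a uniformly compact region of $M$ throughout the process. Here condition (3) of the definition enters decisively. If an arc of a loop under shortening has both endpoints on $\Sigma_i$ at distance less than $\delta$, then the Birkhoff replacement by the minimizing geodesic lies in $\ol{M_i}$. By refining the discretization, one arranges that every arc to be replaced has $\delta$-close endpoints on at most one $\Sigma_i$, so that arcs inside $\ol{M_i}$ remain in $\ol{M_i}$, and arcs in $\ol{M_0}$ cannot stray into an end except through controlled, $\delta$-scale excursions. Combined with the elementary bound that a curve of length at most $L$ cannot penetrate distance greater than $L$ into an end, and with the pre-compactness of closed $L$-neighborhoods in the complete manifold $M$, this confines the entire shortening flow to a fixed compact region of $M$. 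Standard min-max theory then extracts a critical point realizing the positive width, which is a geodesic flower based at $p$, non-trivial because the width is strictly positive.

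The principal obstacle is executing this confined min-max in a way that is continuous in the sweepout parameter and that preserves the topological non-triviality of the family. The local nature of condition (3) means that each shortening step must be compatible with a sufficiently fine subdivision of the curves, and ``long'' excursions into an end (where the endpoints of an arc on $\Sigma_i$ need not be $\delta$-close) have to be handled by an additional subdivision argument tailored to the ends. The key analytic task is to show that the resulting limit is an honestly balanced bouquet of geodesic loops, rather than a degenerate concatenation of broken pieces escaping to infinity along a sequence of configurations in which more and more of the ``mass'' of the sweepout drifts into $\ol{M_i}$.
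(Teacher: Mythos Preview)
Your proposal shares the right geometric intuitions with the paper---a curve-shortening process on bouquets, with the $\delta$-local convexity of the $\bar M_i$ acting as a one-way barrier---but it has a genuine gap at the topological step, and the paper's argument is organized rather differently.

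The gap is the sentence ``show this width is strictly positive \ldots\ by a direct construction inside $\ol{M_0}$.'' In the closed case the positivity comes from a topological obstruction (the nonexistence of an extension of the identity to a cone, i.e.\ Gromov's filling technique), and that is precisely what has to be reproduced here. But $\ol{M_0}$ is a compact manifold with boundary, possibly diffeomorphic to a ball; its based-loop space is contractible, and there is no evident nontrivial relative sweepout class in the space of bouquets based at $p$. You have not said what the ``fixed non-trivial'' sweepout is, and without it the min-max value can simply be zero. The paper supplies this obstruction explicitly: it forms $S=\ol{M_0}\cup\bigcup_i C\Sigma_i$, takes the cone $CS$, and proves (Lemma~\ref{no extension}) that no map $(CS,\bigcup C_i)\to (M^c,\bigcup M_i^c)$ can extend the inclusion of $\ol{M_0}$, even after passing to the quotient $T=M^c/(\bigcup M_i^c)$. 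This $H_n(\,\cdot\,;\Z_2)$ obstruction is what plays the role of your positive width, and it does not live in $\ol{M_0}$ alone---the cones over the $\Sigma_i$ are essential.

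Relatedly, the paper does \emph{not} try to confine the flow to a compact region. Your confinement claim is fragile: local convexity of $\bar M_i$ traps things \emph{into} the ends, not out of them, so once a loop of a sweepout enters $M_i$ the shortening process can push it arbitrarily far out, and the ``length $\le L$ so depth $\le L$'' bound applies only to curves that still meet $\ol{M_0}$. The paper instead embraces this: it allows flowers to escape, compactifies each end by a point $q_i$, defines a \emph{weak} curve-shortening flow that is permitted to send escaping flowers to $q_i$, and shows that the induced ``filling of cages'' is continuous only after composing with the quotient map $M^c\to T$. That weaker continuity is exactly enough to contradict Lemma~\ref{no extension}. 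So the architecture is: (i) no geodesic flower $\Rightarrow$ a weak length-decreasing flow on flowers with no nontrivial stationary points; (ii) such a flow yields a weak filling of $(n{+}1)$-cages; (iii) a weak filling would give the forbidden extension $CS\to M^c$. Your outline captures the analytic content of (i), but not (ii)--(iii), which is where the theorem actually lives.
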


The literature on the existence of periodic geodesics on complete non-compact Riemannian manifold is scarce.
We would like to note
papers by V. Benci and F. Giannoni (\cite{BG}) as well as
L. Asselle and M. Mazzuchelli (\cite{AM}) providing 
non-trivial sufficient conditions for the existence
of non-constant periodic geodesic on non-compact Riemannian manifolds.
These conditions prevent sliding to infinity of a cycle in the
space of closed curves in a compact ``core" of the manifold. In particular, they require the free loop space of the complete non-compact
Riemannian manifold to be non-contractible and, moreover, have non-trivial homology classes in a high dimension.

In contrast, our result does not assume that complete non-compact Riemannian manifold has non-trivial topology
and are applicable to the case of manifolds diffeomorphic to $\mathbb{R}^n$. Further,
the locally convex ends condition does not prevent a sliding of cycles
in the space of free loops to infinity, but prevents movement of free loops
from infinity to the ``compact core" of the manifold.
On the other hand , the main theorem in the
paper of V. Bangert, [B], asserting the existence of a periodic
geodesic on each complete surface of finite area is also applicable
to surfaces diffeomorphic to the plane. (An earlier paper
of G. Thorbergsson [T] established this result in the case of arbitrary surfaces with at least three ends,
so, in fact, [B] dealt with the case of surfaces with either two or one ends.
The first step in Bangert's proof was to observe that all ends of surfaces
of finite area can be cut off by appropriate geodesic loops with angles measured at the infinite side less than $\pi$.
Therefore, all surfaces of finite area have locally convex ends (in the sense of our definition). 
In particular, Bangert's work
also implies that the conjecture above is true for complete surfaces.)
The technique of Bangert is strictly two-dimensional.
Note that for any $n>2$ it is still not known
whether or not every complete $n$-dimensional Riemannian manifold of finite volume has a non-trivial periodic geodesic.
It is also interesting to notice that if $M$ is a complete surface that has a simple periodic geodesic
bounding a domain with compact closure in $M$, then the convex ends condition is trivially
satisfied: One can regard this geodesic as a one element set of hypersurfaces $\{\Sigma\}$. 

Below we will assume that $M^n$ is a complete non-compact Riemannian manifold with locally
convex ends. We will be looking for a periodic geodesic or, more generally, a geodesic flower 
on $M^n$.

Our proof combines several main ideas.


\section{ Attempting an impossible filling.}

The first idea is yet another adaptation of Gromov's
technique from \cite{GrFill} based on attempting an impossible extension of the inclusion
of $M$ into a pseudo-manifold $W$ such that $\partial W=M$ to $W$. In our context this idea
works as follows. 

To explain the argument it will be convenient to define the following spaces
(the bar $\bar U$ over set $U$ denotes the closure of $U$):

\begin{itemize}
    \item $C_i$ is the cone over $\Sigma_i$, $i=1,\dots, e$;
    \item $S = M_0 \cup \bigcup_i C_i$;
    \item $CS$ is the cone over $S$;
    \item $M_i^c$ is the one point compactification of $\bar M_i$; 
    \item $M^c = M_0 \bigcup M_i^c$;
    \item $T=  M^c/(\bigcup_i M^c_i)$.
\end{itemize}

We have the following inclusion maps:
   $$ \phi: (\bar M_0,\bigcup_i \Sigma_i)\longrightarrow (M^c, \bigcup_i M^c_i) $$
   $$ j: (\bar M_0,\bigcup_i \Sigma_i)\longrightarrow (CS,\bigcup_i C_i)$$
   $$ \tau: \bar M_0/(\bigcup_i \Sigma_i)\longrightarrow T$$

Note that $\phi$ and $\tau$ induce isomorphisms on the $n$-th homology
of pairs with $\Z_2$ coefficients. Since $CS$ is contractible, $0=H_n(CS; \mathbb{Z}_2)=
H_n(CS, \bigcup_{i=1}^e p_i;\mathbb{Z}_2)=H_n(CS,\bigcup_{i=1}^e C_i;\mathbb{Z}_2)$, where $p_i$ denote tips of the cones $C_i$.

\forget

For each $i$ let $C_i$ denote the cone over $\Sigma_i$, and $S$ denote the union of all cones $C_i$ and $M_0$. 
We regard
$C_i$ 
as a topological space not endowed with any metric.
Further, let $CS$ denote the cone over $S$, and $M_i^c$ the one point compactification of $\bar M_i$. (The bar over $M_i$ denotes here and below the closure of $M_i$.)
Finally, let $M^c$ denote the union of $M_0$ and all $M^c_i$, or, less formally, the compactification of $M$ with $e$ points,
$c_1,\ldots ,c_e$ by $e$ points at ``infinity", one for each ``end" $M_i$. 

Note that $H_n(S;\Z_2)= H_n(S, \bigcup_i C_i;\mathbb{Z}_2)=H_n(\bar M_0,\bigcup_i\Sigma_i; \mathbb{Z}_2),$ where $n$ denotes the dimension of $M$. 
Similarly, $H_n(M^c, \bigcup_{i=1}^e M^c_i;\mathbb{Z}_2)=H_n(M,\bigcup_{i=1}^e M_i;\mathbb{Z}_2)=
H_n(\bar M_0,\bigcup_{i=1}^e\Sigma_i;\mathbb{Z}_2)=\mathbb{Z}_2$. Let $\phi: (\bar M_0,\bigcup_i \Sigma_i)\longrightarrow (M^c, \bigcup M^c_i)$
be the inclusion map. It is clear that $\phi$ induces the isomorphisms
between $n$th homology groups of these pairs with $\mathbb{Z}_2$ coefficients, where both of these groups are isomorphic to $\mathbb{Z}_2$.
Note that $M_0$ can be regarded as a subset of $S$, and $S$ is the base of the cone $CS$. Denote
the corresponding inclusion of pairs $(\bar M_0,\bigcup_i \Sigma_i)\longrightarrow (CS,\bigcup_i C_i)$ by $j$.
Note, that since $CS$ is contractible, $0=H_n(CS; \mathbb{Z}_2)=
H_n(CS, \bigcup_{i=1}^e p_i;\mathbb{Z}_2)=H_n(CS,\bigcup_{i=1}^e C_i;\mathbb{Z}_2)$, where $p_i$ denote tips of the cones $C_i$.
Consider the quotient map $q$ from $M^c$ to $T$. The restriction
of this map to $M_0$ is a homeomorphism. Alternatively, one could have defined $T$ as a quotient space of $\bar M_0$ by identifying all points
of surfaces $\Sigma_i$  into one point $m$. As all homology groups of the pairs $(\bar M_0, \bigcup\Sigma_i)$ and
$(M^c,\bigcup_i M^c_i)$ are isomorphic to the $n$th homology groups of the corresponding quotient spaces, both of which are $T$,
one can compose $\phi_*$ with this isomorphism, and obtain the identity isomorphism $\tau_{*}: H_n(\bar M_0/(\bigcup_i\Sigma_i);\mathbb{Z}_2)=H_n(T;\mathbb{Z}_2)\longrightarrow H_n(M^c/\bigcup_i M^c_i;\mathbb{Z}_2)=H_n(T; \mathbb{Z}_2)$. Here $\tau$ is the homeomorphism
$\bar M_0/(\bigcup_i \Sigma_i)\longrightarrow M^c/(\bigcup_i M^c_i)$, which is equal to the identity map on $M_0$ regarded as a subset
of both the domain and the target space, and sending the point corresponding to $\bigcup_i\Sigma_i$ to the point corresponding to
$\bigcup_i M^c_i$.

Now, the following lemma is obvious:
\forgotten

The following lemma easily follows:

\begin{Lem} \label{no extension}
Let $\phi': (\bar M_0,\bigcup_i \Sigma_i)\longrightarrow (M^c,\bigcup_i M^c_i)$
be homotopic to the inclusion $\phi$.
Then there exists no extension of
$\phi'$
to $(CS, \bigcup_i C_i)$.
In other words,
there is no continuous map $\psi: (CS, \bigcup C_i)\longrightarrow (M^c, \bigcup_i M^c_i)$
such that $\phi'=\psi\circ j$. 

\begin{center}
\begin{tikzcd}
(\bar M_0,\bigcup_i \Sigma_i) \arrow[r, "\phi'"] \arrow[d, "j"] & (M^c, \bigcup_i M^c_i)
 \arrow[r, "q"] & T \\
(CS,\bigcup C_i) \arrow[ru, dashrightarrow, "\psi"] \arrow[rru, dashrightarrow, "\tilde{\psi}"]& &
\end{tikzcd}
\end{center}

Moreover, the last assertion remains true if the continuity of $\psi$ is replaced by a weaker condition that
the composition $\tilde\psi:CS\longrightarrow T$ of $\psi$ with the quotient map $q: (M^c, \bigcup_i M^c_i)\longrightarrow M^c/(\bigcup_i M^c_i)=T$
is continuous.
\end{Lem}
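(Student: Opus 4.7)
The plan is a straightforward diagram chase in $n$-th homology with $\mathbb{Z}_2$ coefficients, exploiting the two computations already recorded in the excerpt: (i) $\phi_*$ (equivalently $\tau_*$) is an isomorphism between the relative groups, each of which is $\mathbb{Z}_2$; (ii) $H_n(CS, \bigcup_i C_i; \mathbb{Z}_2) = 0$, because $CS$ is contractible. Since the weaker version of the statement formally implies the stronger one, I would prove them in increasing order of generality, but the same mechanism drives both.

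For the first assertion, suppose for contradiction that a continuous $\psi \colon (CS, \bigcup_i C_i) \to (M^c, \bigcup_i M^c_i)$ with $\phi' = \psi \circ j$ exists. Applying $H_n(-,-;\mathbb{Z}_2)$ and using homotopy invariance gives
$$\phi_* \;=\; \phi'_* \;=\; \psi_* \circ j_* .$$
The right-hand side factors through $H_n(CS, \bigcup_i C_i; \mathbb{Z}_2) = 0$ and is therefore zero, contradicting the fact that $\phi_*$ is an isomorphism $\mathbb{Z}_2 \to \mathbb{Z}_2$.

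For the weaker assertion, in which only $\tilde\psi = q \circ \psi \colon CS \to T$ is assumed continuous (so $\psi$ itself may be a set-theoretic extension), I would push the whole diagram down to $T$. Because $\psi$ sends $\bigcup_i C_i$ into $\bigcup_i M^c_i$ and $q$ collapses $\bigcup_i M^c_i$ to the basepoint of $T$, the map $\tilde\psi$ is a continuous map of pairs $(CS, \bigcup_i C_i) \to (T,\mathrm{pt})$, so $\tilde\psi_* \colon H_n(CS, \bigcup_i C_i;\mathbb{Z}_2) \to \tilde H_n(T;\mathbb{Z}_2)$ is automatically zero. On the other hand, from $\phi' = \psi \circ j$ (as set maps) one gets the equality of \emph{continuous} maps of pairs $q \circ \phi' = \tilde\psi \circ j$. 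Applying $H_n$ and using homotopy invariance plus the excerpt's identification of $(q \circ \phi)_*$ with $\tau_*$ on $H_n(\bar M_0, \bigcup_i \Sigma_i;\mathbb{Z}_2) \cong \tilde H_n(\bar M_0/\bigcup_i \Sigma_i;\mathbb{Z}_2)$ shows that $(q \circ \phi')_* = \tilde\psi_* \circ j_*$ is in fact an isomorphism, the desired contradiction.

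The only point requiring care is the bookkeeping at the level of pairs: verifying that $\tilde\psi$ really is a continuous map of pairs when $\psi$ need not be, and that the chain of quotient/excision identifications used in the excerpt commutes with the homology maps in the diagram. Once those compatibilities are in hand, the argument is simply the observation that a nonzero homomorphism $\mathbb{Z}_2 \to \mathbb{Z}_2$ cannot factor through the zero group.
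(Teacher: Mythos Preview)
Your argument is correct and is essentially the same as the paper's: both derive a contradiction by observing that the induced map on $H_n(\,\cdot\,;\mathbb{Z}_2)$ would have to be simultaneously an isomorphism (coming from $\phi_*$/$\tau_*$) and factor through $H_n(CS,\bigcup_i C_i;\mathbb{Z}_2)=0$. The only cosmetic difference is that the paper passes to the quotient spaces $\bar M_0/(\bigcup_i\Sigma_i)$ and $CS/(\bigcup_i C_i)$ and phrases the factorization via maps $\tilde j$ and $\rho$ there, whereas you run the same computation directly in relative homology of pairs; these are equivalent via the standard quotient/excision identifications you allude to.
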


\begin{proof}
Indeed, if lemma is false, then $\phi'=\psi\circ j$, and, therefore, $q\circ\phi'=q\circ\psi\circ j=\tilde\psi\circ j$ is continuous, where
both maps are regarded as defined on $\bar M_0$.
As both maps send $\bigcup_i\Sigma_i$ to the point $m$ of $T$, they can be lifted to equal maps of $\bar M_0/(\bigcup_i\Sigma_i)$.
In other words, $q\circ\phi'=\tau^{-1}\circ q\vert_{\bar M_0}$, 
where, as we know, $\tau^{-1}$ 
induces the identity isomorphism of the $n$th homology groups with $\mathbb{Z}_2$
coefficients. Similarly, $\tilde\psi\circ j$ can be represented as $\mu\circ q\vert_{\bar M_0}$ for some $\mu$, which must coincide
with $\tau^{-1}$. But note that
$\mu$ can be decomposed as $\tilde j: \bar M_0/(\bigcup_i\Sigma_i)\longrightarrow CS/(\bigcup_i C_i)$, and the map
$\rho: CS/(\bigcup C_i)\longrightarrow T=M^c/(\bigcup_i M^c_i)$ induced by $\tilde\psi$.
As the $n$th homology with $\mathbb{Z}_2$ coefficients
of $CS/(\bigcup_i C_i)$ is trivial, the induced homomorphism $\tilde j_*$ is zero, and, therefore, $\mu_*$ is also trivial. But $\mu=\tau^{-1}$,
and we obtain a contradiction proving the lemma.
\end{proof}

\forget
We will also need space $W$, constructed as follows.

Now construct a space $W$ as follows. Consider cones $C\bar M_0$ with vertex $P_0$ over $\bar M_0$,  and $CC_i$ over all cones
$C_i$, $i=1,\ldots, e$, with vertices $P_i$.
If we would glue $C\bar M_0$ to $CC_i$ for all $i=1,2,\ldots, e$ identifying $C\Sigma_i$ in boundaries of both cones, we are going
to obtain $CS$. Instead, for each  $i$ we only identify $\Sigma_i$ in the boundary of both cones, and then fill the suspension
$S\Sigma_i$ of $\Sigma_i$ obtained by gluing the cones $C\Sigma_i$ with vertices $P_0$ and $P_i$ by the cone $CS\Sigma_i$ over this suspension
with the new cone vertex $Q_i$. Denote the resulting space by $W$. Observe, that $W$
is glued from the cones $C\bar M_0$, $CC_i$ and $CS\Sigma_i$. Note that the contractible set $CS\Sigma_i$ projects to the cone 
$C\Sigma_i$ with the tip at $Q_i$ by the map that sends each point $tQ_i+(1-t)A$, where $A=\tilde t P_i + (1-\tilde t)C$ or
$A=\tilde t P_0+(1-\tilde t)C$ for $C\in \Sigma_i$, to $t\tilde (1-(1-t)(1-\tilde t)) Q_i+(1-t)(1-\tilde t)C$. Thus, we obtain a surjective map
$\theta:W\longrightarrow CS$. We have an obvious inclusions of $C_i$ as bases of the cones $CC_i\subset W$, and $\bar M_0$ as the base
of the cone $C\bar M_0\subset W$, and, thus, an inclusion $J: (\bar M_0,\bigcup_i \Sigma_i)\longrightarrow (W,\bigcup_i C_i)$. Note that $\theta\circ J=j$.

Now note that $\theta$ is the identity on $S$ that is regarded as a subset of $W$. Therefore, we can define a partial inverse $\theta^{-1}$ on the base $S$ of $CS$.
It will map $S\subset CS$ to $S$ in the base of $W$. In particular, $\Sigma_i\subset S$ will be mapped to $\Sigma_i\subset S\Sigma_i\subset W$ , where the first inclusion send $\Sigma_i$ to the base of the suspension.
This partial inverse of $\theta$ can be easily extended to a map $\alpha:CS\longrightarrow W$ sending $C\Sigma_i\subset CS$ to $C\Sigma_i\subset CS\Sigma_i\subset W$, where the first of the two inclusions sends $\Sigma_i$ to the base of the suspension. Obviously, $\alpha\circ j=J$.

The previous lemma has the following immediate corollary:

\begin{Cor}
There is no (not necessarily continuous) map $\Psi: (W, \bigcup C_i)\longrightarrow (M^c, \bigcup_i M^c_i)$
such that 1) $\phi=\Psi\circ J$; 2)
the composition $\tilde\psi:W\longrightarrow T$ of $\Psi$ with the quotient map $q: (M^c, \bigcup_i M^c_i)\longrightarrow M^c/(\bigcup_i M^c_i)=T$
is continuous.
\end{Cor}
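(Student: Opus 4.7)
The plan is to reduce the corollary directly to Lemma \ref{no extension} by using the map $\alpha \colon CS \to W$ constructed above, which is a one-sided inverse of the collapsing map $\theta \colon W \to CS$ over the base $S$. The two structural identities that make this possible are $\theta \circ J = j$ and $\alpha \circ j = J$, which together exhibit the pair $(CS, \bigcup_i C_i)$ as, in effect, a retract of the pair $(W, \bigcup_i C_i)$ relative to $\bar M_0$.

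Suppose for contradiction that $\Psi \colon (W, \bigcup_i C_i) \to (M^c, \bigcup_i M^c_i)$ satisfies conditions (1) and (2), and set $\psi := \Psi \circ \alpha \colon CS \to M^c$. First I would verify that $\psi$ is a map of pairs, i.e.\ $\psi(\bigcup_i C_i) \subset \bigcup_i M^c_i$: by the construction of $\alpha$, each $C_i \subset S \subset CS$ is mapped into the corresponding copy of $C_i$ sitting inside $W$ as the base of $CC_i$, and $\Psi$ then sends this into $M^c_i$ by hypothesis. Using $\alpha \circ j = J$, one gets
\[ \psi \circ j \;=\; \Psi \circ \alpha \circ j \;=\; \Psi \circ J \;=\; \phi, \]
so $\psi$ factorizes $\phi$ through $j$ in exactly the sense forbidden by Lemma \ref{no extension} (applied with $\phi' = \phi$, which is trivially homotopic to itself).

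It remains to check the continuity hypothesis of the weaker version of that lemma. The map $\alpha$ is continuous by its explicit construction from the cone coordinates, and $q \circ \Psi$ is continuous by hypothesis (2); therefore
\[ q \circ \psi \;=\; (q \circ \Psi) \circ \alpha \colon CS \longrightarrow T \]
is continuous, which is precisely the weaker condition appearing in Lemma \ref{no extension}. That lemma then produces a contradiction, completing the proof.

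The only nontrivial point is the bookkeeping needed to confirm that $\alpha$ sends each $C_i \subset CS$ into the correct copy of $C_i \subset W$ and that $\alpha \circ j = J$ holds on the nose; this is the single place where the specific way $W$ is assembled from $C\bar M_0$, the cones $CC_i$, and the filled suspensions $CS\Sigma_i$ actually enters. Once these set-theoretic identifications are checked, the corollary follows formally from Lemma \ref{no extension}.
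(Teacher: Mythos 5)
Your proposal is correct and follows essentially the same route as the paper: the paper's own proof is precisely to precompose $\Psi$ with $\alpha\colon CS\to W$, using $\alpha\circ j=J$, and then invoke Lemma \ref{no extension} for the resulting $\psi=\Psi\circ\alpha$. Your additional verifications (that $\alpha$ respects the pairs and that $q\circ\psi=(q\circ\Psi)\circ\alpha$ is continuous) are exactly the details the paper leaves implicit.
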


Indeed, assume that such a map $\Psi$ exists. Precomposing it with $\alpha:CS\longrightarrow W$ we obtain a map $\psi$ satisfying the conditions of the previous lemma that asserts that such a $\psi$ cannot exist.

The space $W$ will play a prominent role in this paper, and we would like to endow it with a canonical triangulation
extending any (fine) triangulation of the pair $(\bar M_0,\bigcup_i\Sigma_i)$. Given such a triangulation we first
triangulate each $C_i$ using the cone triangulation over given triangulation of $\Sigma_i$ with
a new vertex $q_i$ (the tip of the cone), then triangulate $CC_i$ and $C\bar M_i$ using the cone triangulation
with verices $P_i$, $i\in\{0,\ldots ,e\}$. Finally we triangulate $CS\Sigma_i$ using the cone triangulation over the already
constraucted triangulation of the boundary with the new vertex $Q_i$ for each $i=1,\ldots ,e$.

Now we are going to explain how to prove our main theorem asserting the existence of a geodesic net by contradiction. 
Assume that $M$ does not have a geodesic net. Our strategy will be to bring this assumption to a contradiction
with the previous corollary. We will use the assumption to extend $\phi$ from $\bar M_0$ (regarded as a subset of the base of $W$) to a map $\Psi$ defined on the whole $W$, so that the restriction
of $\Psi$ to $C_i$ takes values in $M^c_i$, and the composition of $\Psi$ with the quotient map to $T=M^c/(\bigcup M^c_i)$
is continuous. The previous corollary would provide the desired contradiction.
\forgotten

Now we will sketch the plan of the proof of our main theorem by contradiction. We are going to assume that there are no geodesic flowers on $M$, and will try to extend the identity map $\phi$ to a map $\psi$ of $CS$ that cannot exist according to the previous lemma.
Choose a very fine triangulation of $\bigcup_i\Sigma_i$, extend it to a very fine triangulation of $\bar M_0$, and also extend it to a (not fine) triangulation of all $C_i$, $i\geq 1$ by adding new vertices $q_i$ (= tips of cones $C_i$) and triangulating all $C_i$ as cones over the chosen fine triangulations of bases of $C_i$. Finally, choose a vertex $P_0$ of the cone $CS$, and triangulate $CS$ as the cone over just constructed triangulation of the base.

We are going to attempt the impossible extension of $\phi$ to $\psi$. We map $P_0$ to any vertex $p$ in the fine triangulation of $\bar M_0$ in $M_0$,
$q_i$ to any vertex $v_i$ of the fine triangulation of $\Sigma_i$. Map all $1$-dimensional simplices of $S$ that connect $q_i$ with a vertex
$v$ of $\Sigma_i$ to a minimizing geodesic on $\Sigma_i$ in its intrinsic metric that connects $v_i$ and $v$. Observe that $1$-skeleta of
all $n$-dimensional simplices of $S$ with a vertex at $q_i$ will be on $\Sigma_i$. We are going to map all ``new" $1$-simplices of $CS$ with one vertex at $P_0$ to a minimal geodesic in the intrinsic metric of $\bar M_0$ that connects $p$ and this vertex (or $p$ and $v_i$, if the vertex is $q_i$).
Observe that $1$-skeleta of all $(n+1)$-dimensional simplices of $CS$ will be in $\bar M_0$.

\forget
We also extend the chosen triangulation of $M_0$ to a triangulation
of $M$. We start from extending $\phi$ to new $0$- and $1$-dimensional simplices of
$W$. The vertex $q_i$ of the cone $C_i$ can be mapped to any vertex $\tilde q_i$ of the triangulation of $M_i$. The vertex $P_0$ will be mapped to any fixed vertex $v_0$ of the triangulation of $M_0$, $P_i$ will be mapped to any vertex $\tilde Q_i$.
of the triangulation of $M_i$ for $i=1,\dots ,e$. We map each vertex $Q_i$ to some vertex of the chosen triangulation
of $\Sigma_i$.

Now we need to map all new $1$-dimensional simplices of $W$ to curves connecting the images of their endpoints.
The edges of $C_i$ will connect points in $\bar M_i$, ($i\geq 1$), and can be mapped as (arbitrary) minimizing geodesics
in the inner metric of $\bar M_i$. Similarly, all new $1$-dimensional simplices in $C\bar M_0$ must be mapped into curves between two points
of $\bar M_0$, and are chosen as minimizing geodesics in the inner metric of $M_0$. The new edges of $CC_i$ are mapped into minimizing
geodesics in $\bar M_i$ for $i=1,\ldots, e$. Finally, the new edges in each $CS\Sigma_i$ are mapped into minimizing geodesics in 
the inner metric of $\Sigma_i$.

Observe, that our choice of the triangulation of $W$ and the extension of $\Psi$ to the $1$-skeleton implies that the $1$-skeleton
of each $(n+1)$-dimensional simplex of $W$ (that is, a simplex of maximal dimension) will be mapped either to $\bar M_0$, or
to $\bar M_i$ for some $i\geq 1$. (These two possibilities are not mutually exclusive, as
the $1$-skeleta of some simplices of the maximal dimension are mapped to $\Sigma_i$.) Thus:

\begin{Lem}
If the $1$-skeleton of some $(n+1)$-dimensional simplex of $W$ is not mapped to the complement of $M_0$, then it is mapped to the closure of $M_0$.
\end{Lem}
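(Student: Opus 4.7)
The plan is a simple case analysis on the $(n+1)$-dimensional simplex $\sigma$, based on the structure of the cone triangulation and the choices we have just made for $\psi$ on vertices and $1$-simplices. Every $(n+1)$-simplex of the cone $CS$ has the form $\sigma = P_0 * \tau$ for some $n$-simplex $\tau$ of the base $S = \bar M_0 \cup \bigcup_i C_i$. Since each $C_i$ is triangulated as the cone with apex $q_i$ over the fixed triangulation of $\Sigma_i$, the top-dimensional $\tau$ must be one of two types: either $\tau$ lies entirely in $\bar M_0$, or $\tau$ lies in some $C_i$ and has vertex set $\{q_i, u_1, \dots, u_n\}$ with $u_1, \dots, u_n \in \Sigma_i$.

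In the first case every vertex of $\sigma$ other than $P_0$ lies in $\bar M_0$ and is sent to itself by $\psi$, while $P_0$ is sent to the chosen point $p \in M_0$. The edges of $\sigma$ not incident to $P_0$ are $1$-simplices of the fixed triangulation of $\bar M_0$, so their images lie in $\bar M_0$; the edges incident to $P_0$ were declared to be minimizing geodesics of the intrinsic metric of $\bar M_0$ and so also lie in $\bar M_0$. Hence the image of the $1$-skeleton of $\sigma$ is contained in $\bar M_0$.

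In the second case, the vertices of $\sigma$ are $\{P_0, q_i, u_1, \dots, u_n\}$, sent respectively to $p \in M_0$, to $v_i \in \Sigma_i$, and to themselves. The edges $[u_j, u_k]$ are $1$-simplices of the triangulation of $\Sigma_i \subset \bar M_0$ and map to themselves; the edges $[q_i, u_j]$ map to minimizing geodesics in the intrinsic metric of $\Sigma_i$, hence into $\Sigma_i \subset \bar M_0$; and the edges incident to $P_0$ map to minimizing geodesics in the intrinsic metric of $\bar M_0$. Once again, the image of the $1$-skeleton of $\sigma$ lies in $\bar M_0$.

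Combining the two cases, the $1$-skeleton of every $(n+1)$-simplex is mapped into the closure $\bar M_0$ of $M_0$, so the stated dichotomy always holds via the second alternative. No obstacle is expected: the whole content of the claim is already encoded in the construction of $\psi$ on the $0$- and $1$-skeleta, and the verification is essentially tautological. The ``complement of $M_0$'' alternative of the dichotomy is recorded because it will become pertinent at the next stage, when $\psi$ is extended over higher-dimensional simplices and some of them must, under the assumption that no geodesic flower exists, be forced into a neighborhood of an end $\bar M_i$.
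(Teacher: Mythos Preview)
Your argument concerns the wrong space. The lemma is stated for $W$, not for $CS$; these are two different complexes in the paper, arising from two different versions of the extension argument. The space $W$ is not the cone over $S$: it is built from the cone $C\bar M_0$ (apex $P_0$), the cones $CC_i$ over each $C_i$ (apexes $P_i$), and the cones $CS\Sigma_i$ over the suspensions $S\Sigma_i$ (apexes $Q_i$). Accordingly, the map in play is $\Psi$, not $\psi$, and its values on vertices differ from the ones you use: the cone vertex $q_i$ of $C_i$ is sent to a vertex $\tilde q_i$ of the triangulation of $M_i$ (not to a point $v_i\in\Sigma_i$), each $P_i$ is sent to a vertex of $M_i$, and all edges of $C_i$ and all new edges of $CC_i$ are sent to minimizing geodesics in the intrinsic metric of $\bar M_i$.

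As a consequence your conclusion that the $1$-skeleton of \emph{every} $(n+1)$-simplex lands in $\bar M_0$ is false for $W$: any top-dimensional simplex lying in $CC_i$ has its entire $1$-skeleton mapped into $\bar M_i$, which is contained in the complement of $M_0$. The lemma is a genuine dichotomy, not a vacuous one. The paper's argument is the straightforward case check over the three types of top-dimensional simplices of $W$: those in $C\bar M_0$ have $1$-skeleton in $\bar M_0$; those in $CC_i$ have $1$-skeleton in $\bar M_i$; those in $CS\Sigma_i$ have $1$-skeleton in $\Sigma_i\subset \bar M_0\cap \bar M_i$. Your case analysis is the right shape, but it must be carried out for $W$ with the map $\Psi$, and then the second alternative of the dichotomy actually occurs.
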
 

\forgotten
\par
A natural idea is to continue extending $\psi$ inductively to $2$-skeleta, then to $3$-skeleta, etc. Instead, we assume that $M_0$
does not have any geodesic flowers and {\it immediately} extend the inclusion of the $1$-skeleton of each $(n+1)$-dimensional simplex of $CS$ into $M$
to the map of the whole simplex. We do this using the idea of
``filling of cages" from [R1]. This idea will be described in details in the next two sections.  The description of ``filling of cages" in
the next two sections will immediately imply that the image of each $n$-simplex with vertex at $q_i$ in $S$ will be in $\bar M_i$, $i\geq 1$. 

Here we only note that we would like to be able to eventually extend
maps of the $1$-skeleton of the standard $(m+1)$-dimensional simplex $\Delta^{m+1}$  for each $m\in \{1,\ldots, n\}$ to its interior. The extension must map the interior
to $M^c$. If the $1$-skeleton is mapped to one of $\bar M_i$ (for example, to $\Sigma_i$), then the extension maps the whole simplex to
$M_i^c$.
The restriction of each of these extensions
to the $1$-skeleton of any $(l+1)$-dimensional face of $\Delta^{m+1}$ must be the extension of the restriction. We want these extensions only for maps , where the
length of the image of each side of $\Delta^{m+1}$ does not exceed $\tilde L=\max\{Diam(\bar M_0), \max_{j\geq 1} Diam(\partial M_j)\}$, where all the diameters are calculated with respect to the inner metrics. Finally, mention that very small cages must be filled by very small simplices.

Combining all the extensions for all $(n+1)$-dimensional simplices of the triangulation of $CS$, we will obtain
a map $\psi:CS\longrightarrow M^c$. We would like to argue that
Lemma 2.1 yields
a contradiction that refutes the assumption that there are no geodesic flowers on $M$.
Yet a minor technical difficulty is that the restriction of $\psi$ to $\bar M_0$ 
is not the identity map on $\Sigma_i\subset S$ as in the conditions of Lemma 2.1. (So, the composition of $j$ and $\psi$ is not the inclusion $\phi$.) This happens because the fillings of $1$-skeleta of very small simplices $\sigma^{n-1}$ of the triangulations of $\Sigma_i$ will be not $\sigma^{n-1}$ but very close very small $(n-1)$-dimensional simplices contained in a small collar of $\Sigma_i$ in $\bar M_i$.
For each $i$ there will be an obvious homotopy between the restriction of $\psi$ to $\Sigma_i$ and the inclusion map of $\Sigma_i$ to $M$ that will
move all points in the fillings of $1$-skeleta
of small simplices triangulating $\Sigma_i$ via normals to $\Sigma_i$ to $\Sigma_i$ inside $M_i$. This homotopy can be extended to a homotopy between the restriction of $\psi'$ to $\bar M_0$ and the inclusion of $\bar M_0$ to $M$. 
Hence, applying Lemma \ref{no extension} with $\phi = \psi|_{\overline{M}_0}$
homotopic to the inclusion $\phi$
we obtain the desired contradiction.
\forget
This homotopy (homotopies)
will be relative to the $1$-skeleton. This homotopy will move all points along a continuous family of minimizing geodesics in $M$, and the image
of this homotopy will be contained in a small neighbourhood of $\bar M_0$ (respectively,  $\cup\Sigma_i$). (Here the idea is that the image of each small simplex of $\Sigma_i$ under ``filling of cages" map will be contained in the same small convex metric ball of $\Sigma_i$ as the original simplex.) 
\forgotten

\forget
Now the idea is that in order to construct $\psi$ exactly as in Lemma 2.1 one can extend this homotopy between two maps of $\bar M_0$ to a homotopy
between two maps of the ambient $S$ that will be defined below. This ambient homotopy will be defined
on $S\times [0,1]$. Attaching $S\times [0,1]$ to $CS$, we obtain a larger cone (of height $2$) that can be rescaled back to $CS$. The restriction of the map $\psi$ of the larger cone to $\bar M_0$ in its
base is the inclusion map, the restriction of this map
to the collar (= $\bar M_0\times [0,1]$ in the glued $S\times [0,1]$) is the homotopy between the inclusion map and the ``filling of cages" map, the restriction to the``inner cone" (=the ``initial" $CS$) is $\psi'$, that is, the gluing of all filling of cages maps for the $1$-skeleta of all  $(n+1)$-dimensional simplices of $CS$. It remains
only to define $\psi$ on $S\times [0,1]$.

But first we need to define the map of the base of the ``longer" $CS$ on the complement of $\bar M_0$. (Recall, that this map was just defined on $\bar M_0$ as the inclusion.) This complement consists of cones $C_i=C\Sigma_i$ for all $i$. We can think of each $C\Sigma_i$ as the union
of a collar $\Sigma_i\times [0,1]$ and an inner cone $C\Sigma_i$. The inner cone is mapped using the restriction of $\psi'$ to $C_i$. In particular,
the restriction of this map to the boundary of the inner $C\Sigma_i$ is the map
of $\Sigma_i$ obtained from the $1$-skeleton of the fine triangulation of $\Sigma_i$
via the filling of cages. The map of the collar is a homotopy $H: \Sigma_i\times [0,1]\longrightarrow M$ between the inclusion (at $t=0$) and the ``filling of cages" map of $\Sigma_i$ to $M$ (at $t=1$).

We already know that the restriction of the map $\psi$ to the collar $S\times [0,1]$ to
$\bar M_0\times [0,1]$ must be a homotopy between the inclusion (at $t=0$)
and the ``filling of cages" map (at $t=1$). We define restriction of this homotopy to $\Sigma_i$ in the boundary of $M_0\subset S\times \{0\}$ as the same homotopy $H:\Sigma_i\times [0,1]\longrightarrow M$ as above.
The restriction of this homotopy to the inner cone of $C$ is the identity map at each moment of time. However, we make the collar between the inner cone in $C\Sigma_i$ and the outer boundary
of $C\Sigma_i$ to become thinner and thinner as $t$ increases. At each moment its width is $1-t$. At the moment $t=1$ its
width becomes zero. At each moment $t$, the collar $\Sigma_i\times [t,1]$ of $\sigma_i$ in $C\times \{t\}$
is mapped using the restriction of $H$ to $\Sigma_i\times [t,1]$.
This completes the construction of $\psi$.


We are going to consider this homotopy in reverse , that is from the identity map to
the constructed map of $\Sigma_i$. We are going to attach its domain , $\Sigma_i\times [0,1]$ to $C_i=C\Sigma_i$ obtaining a longer cone over $\Sigma_i$ that can be rescaled back to $C_i$. The base $\Sigma_i$ of this ``longer" cone is mapped to $M^c$ via
the identity map, the collar of the base is mapped using the mentioned homotopy,
and the rest is mapped using the "filling of cages". Note that while this "filling
of cages" is the usual continuous filling of the $1$-skeleta of small simplices in
$\Sigma_i$, or, more generally, $\bar M_0$ is a usual continuous filling,
it will not be continuous on ``larger" simplices with a vertex at $q_i$ (or at $P_0$ below). However, these discontinuities will disappear after composing with the quotient map that identifies the complement of $M_0$ to a point.


The missing details of this sketch of the proof will be explained 
and will become clear at the end of the next section.
\forgotten

\medskip\noindent
\section{Cages and their fillings.}

\subsection{Cages, flowers, fillings.}
The content of this section follows [R1], adapting one of the ideas to our
situation. 
For each $i=2,3,\ldots, n+1$ define an {\it $i$-cage} in $M$ as a map of the $1$-skeleton of the standard $i$-dimensional
simplex $\sigma^i$ to $M$, where each edge is mapped into a broken geodesic. If this map is constant, we call the cage {\it a constant cage}.
We extend the class of all constant cages, and, thus, all cages by allowing constant cages that map the $1$-skeleton of $\sigma^i$ to
one of $e$ points at infinity of $M^c$. Thus, formally speaking, cages map the $1$-skeleta of simplices to $M^c$, but the image of each non-constant cage
is required to be in $M$. We are assuming that the vertices of $\sigma^i$ form a totally ordered set (the ordering might come from
a numbering of the vertices by numbers $1,\ldots ,N+1$.) Denote
the space of all $i$-cages in $M$ by $Cage_i$. Note that for each $j$-dimensional face of $\sigma^i$ the restriction
of an $i$-cage $k$ to the $1$-skeleton of $\sigma^i$ is a $j$-cage. We will call such $j$-cages {\it subcages} or {\it $j$-subcages}
of $k$.

We will also need a special class of $i$-cages that we will call {\it flowers}. Consider the vertex $v$ of an $i$-cage with the highest number.
Assume that {\it all} edges (=$1$-simplices) incident to $v$ are mapped to the same point. Thus, such cages can be also defined as maps of  
${i(i-1)\over 2}$ circles to $M$. In fact, some of these circles can also be mapped to a point. We use notation $Flwr_k$ 
for maps of the wedge of $k$ loops to $M$. As we just observed, there are natural inclusions  $Flwr_{i(i-1)\over 2}\subset Cage _i$, and $Flwr_j\subset Flwr_i$ for $j\leq i$. Recall, that a set $C \subset M$ is $\delta$-locally convex, if for all $x,y\in C$ that are $\delta$-close in $M$ all minimizing
geodesics in $M$ between $x$ and $y$ are contained in $C$.
For each $L \in (0, \infty)$ let $Cage_i^L$ denote the subset of $Cage_i$ formed
by all cages of length $\leq L$, $Cage^\infty_i$, by definition, coincides with $Cage_i$. Similarly, $Flwr^\infty_j=Flwr_j$, and for each $L\in (0,\infty)$ $Flwr_j^L$ denotes a subset of $Flwr_j$ formed by all $j$-flowers of length $\leq L$.
\begin{Def}
Given $\delta>0$, and $L\in (0,\infty]$ a {\it strong filling of $(n+1)$-cages} of length $\leq L$ is a family of continuous maps $H_i:Cage^L_i\times [0,1]\longrightarrow Cage_i$ for $i\in\{2,\ldots,n+1\}$ such that:
\begin{enumerate}
    \item For each $i$-cage $k$ $H_i(k,0)=k$ and $H_i(k,1)$ is a constant cage. In other words, $H_i$ is a contraction of $Cage^L_i$ inside $Cage_i$ to its subspace formed
by all constant cages.
    \item If the image of an $i$-cage $k$ is in a $\delta$-locally convex subset $V$ of $M$ (in particular, $k$ might be in the closure of $M_j$ for some $j$), then its trajectory $H(k,t),\ t\in [0,1)$ will be in $V$.
    \item If $k$ is a constant $i$-cage, then $H_i(k,t)=k$ for all $t \in [0,1]$.
    \item If $k\in Flwr^L_j\subset Flwr^L_{i(i-1)\over 2}\subset Cage^L_i$, $(j\leq {i(i-1)\over 2})$, then for $H_i(k,t)\in Flwr_j$ for all $t \in [0,1]$.
\end{enumerate}
\end{Def}

The importance of this notion lies in the following proposition:

\begin{Pro}
Assume that $M$ admits a strong filling of cages for some $\delta>0$ and $L\in (0,\infty]$.
Let $k$ be an $i$-cage in $M$ of length $\leq L$, $L<\infty$, for some $i=2,3,\ldots ,n+1$. Then:
\begin{enumerate}
    \item There exists a continuous map $\phi=\phi(k)$ of the $i$-simplex $\sigma^i$ to $M^c$ extending $k$.
Further, the dependence of $\phi$ on $k$ is continuous
(that is, the restriction of $\phi(k)$ to the $1$-skeleton 
of $\sigma^i$ is equal to $k$). 
    \item Let $\sigma^j$ be a $j$-dimensional face of $\sigma^i$, $2\leq j<i$, $k_j$ the corresponding $j$-subcage of $\sigma^i$. Then $\phi(k_j)$ 
coincides with the restriction of $\phi(k)$ to $\sigma^j$.
    \item If the
image of $k$ (or one of its $j$-dimensional subcages $k_j$) is in a closed $\delta$-locally convex subset of $M$ (in particular, it might be in the closure of $M_m$ for some $m=1,\ldots ,e$), the same will be true for the image
of $\phi$ (correspondingly, the restriction of $\phi$ to the $j$-dimensional face $\sigma^j$ of $\sigma^i$ corresponding to $k_j$.)  
\end{enumerate}
\end{Pro}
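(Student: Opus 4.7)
The plan is to construct $\phi_i(k)$ by induction on $i\in\{2,\ldots,n+1\}$, using the contraction $H_i$ at each step to realize $\sigma^i$ as the cone on its boundary under a nullhomotopy assembled from lower-dimensional fillings. All three properties will be packaged into a single inductive hypothesis, since face-compatibility is needed to glue the construction at every stage.

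Base case $i=2$: the $1$-skeleton of $\sigma^2$ is the whole boundary $\partial\sigma^2$, so the path $t\mapsto H_2(k,t)$ yields a continuous map $\partial\sigma^2\times[0,1]\to M^c$ that equals $k$ on $\partial\sigma^2\times\{0\}$ and is constant on $\partial\sigma^2\times\{1\}$. Collapsing the latter factor to a point identifies the domain with $\sigma^2$ and defines $\phi_2(k)$. Property (3) follows because, by condition (2) of the strong filling, $H_2(k,t)$ stays in any closed $\delta$-locally convex $V$ containing the image of $k$ for $t<1$, and closedness absorbs the limiting constant point at $t=1$.

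Inductive step: suppose $\phi_j$ has been built for $2\le j<i$ with the three properties. Given an $i$-cage $k$ and a time $t\in[0,1]$, each $(i-1)$-face $\sigma^{i-1}_\alpha$ of $\sigma^i$ carries the $(i-1)$-subcage $k_\alpha(t)$ of $H_i(k,t)$, and by induction $\phi_{i-1}(k_\alpha(t))$ is defined on $\sigma^{i-1}_\alpha$. Inductive face-compatibility forces these pieces to agree on every shared $(i-2)$-face, gluing them into a continuous map $F_{(k,t)}:\partial\sigma^i\to M^c$. At $t=1$, since $H_i(k,1)$ is constant, each of its $(i-1)$-subcages is the same constant cage, hence $F_{(k,1)}$ is constant at a single point of $M^c$. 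Therefore the total map $\partial\sigma^i\times[0,1]\to M^c$ factors through the quotient cone $\cong\sigma^i$, defining $\phi_i(k)$.

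The three properties for $\phi_i$ then follow: (1) at $t=0$, $F_{(k,0)}$ restricts to $k$ on each edge of $\partial\sigma^i$ by the inductive hypothesis, and continuity in $k$ is inherited from continuity of $H_i$ and of $\phi_{i-1}$; (2) the construction makes the restriction of $\phi_i(k)$ to each $(i-1)$-face equal to $\phi_{i-1}$ of the corresponding subcage, and lower-dimensional face-compatibility then follows by transitivity; (3) condition (2) of the strong filling keeps $H_i(k,t)$ (and hence each subcage) in any closed $\delta$-locally convex $V$ containing the image for $t<1$, whence induction puts each $\phi_{i-1}(k_\alpha(t))$ in $V$, and closedness handles the limit $t=1$. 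The main technical point is that properties (1), (2), (3) must be carried through the induction together: (1) and (2) for $\phi_{i-1}$ are both needed to make $F_{(k,t)}$ well-defined and continuous. The "constant at infinity" clause is absorbed harmlessly by working in $M^c$, where a constant cage at a point of $M^c$ extends trivially to the full simplex by the constant map.
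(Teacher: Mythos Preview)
Your proof is correct and follows essentially the same route as the paper: induction on $i$, using the homotopy $H_i(k,\cdot)$ to realize $\sigma^i$ as the cone on $\partial\sigma^i$, filling $\partial\sigma^i$ at each time $t$ by gluing the inductively-constructed $\phi_{i-1}$ on the $(i-1)$-faces, and collapsing the top of the cone to the constant cage $H_i(k,1)$. Your treatment is slightly more explicit than the paper's about why the face-compatibility hypothesis is needed at each stage and why closedness of $V$ handles the point $t=1$; the one step you leave implicit (that $\phi_{i-1}$ of a constant cage is the constant map, needed for ``$F_{(k,1)}$ is constant'') is exactly what the paper invokes as ``property (3) of cage fillings'' and follows immediately from your inductive construction.
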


\begin{proof}
The proof uses the induction with respect to $i$. To prove the base note that $2$-cages are boundaries
of $2$-simplices, and for each $2$-cage $k$ map $H_2$
provides the extension of the map of the boundary of the $2$-simplex to its interior. Moreover, the second property of $H_2$ implies that
if $k$ is in the closure of $M_m$, then its whole trajectory will be in $M^c_m$.

Now we prove the induction step. Assume that the proposition holds for all $i\leq I$. In order to prove it for $i=I+1$ consider
an $i$-cage $k$ and the one-parametric family of $i$-cages $k_t=H_i(k, t)$, $t\in [0,1]$. Each $i$-cage $k_t$ has $(i+1)$ of $(i-1)$-subcages
$k_t(l), l=1,\ldots ,i+1$ corresponding to the $(i-1)$-dimensional faces of the $i$-simplex $\sigma^i$. Applying the induction assumption
extend each $k_t(l)$ to the map $\phi(k_t(l))$ of the $(i-1)$-dimensional simplex $\sigma^{i-1}$. Together these $i+1$ maps provide
the extension of $k_t$ to {\it the boundary} of the $\sigma^i$. When $t$ varies in $[0,1)$, these extensions provide the extension
of $k$ to $\sigma^i$ minus a point $C$ at the center of $\sigma^i$, which is then mapped to $H_i(k,1)$. The continuity of the constructed extension at 
$C$ follows from property (3) of cage fillings and the continuity of $H_{i-1}$. This completes the proof of (1).

Now observe that for $j=i-1$ (2) follows immediately from the construction of $k$, 
and for $j=i-l$ immediately follows from this observation and the induction assumption.
Finally, (3) follows from property (2) of fillings of cages.
\end{proof}

\begin{Def} A (weak) filling of cages in $Cage^L_{n+1}$ is defined exactly as the strong filling with the only distinction that the requirement of continuity
of maps $H_i$ is replaced by a weaker requirement that only the compositions of $H_i$ with the quotient map $M^c\longrightarrow T=M^c/(\bigcup_i M^c_i)$ are continuous.
\end{Def}

\begin{Pro} \label{from cages to simplices}
Assume that $M$ admits a (weak) filling of cages for some $L\in (0,\infty]$ and $\delta>0$.
Let $k\in Cage^L_i$ be an $i$-cage in $M$ for some $i=2,3,\ldots ,n+1$. Then:

\begin{enumerate}
    \item There exists a map $\phi=\phi(k)$ of the $i$-simplex $\sigma^i$ to $M^c$ extending $k$. The composition $\tilde\phi$ of $\phi$ with the quotient map $M^c\longrightarrow T$ is continuous.
Further, the dependence of $\phi$ on $k$ becomes continuous
after projecting to $T$.
\item Let $\sigma^j$ be a $j$-dimensional face of $\sigma^i$, ($2\leq j<i)$, $k_j$ the corresponding $j$-subcage of $\sigma^i$. Then $\phi(k_j)$ 
coincides with the restriction of $\phi(k)$ to $\sigma^j$.
\item Moreover, if the
image of $k$ (or one of its $j$-dimensional subcages $k_j$) is in a closed convex or $\delta$-locally convex subset of $M$ (in particular, it might be in the closure of $M_m$ for some $m=1,\ldots ,e$), the same will be true for the image
of $\phi$ (correspondingly, the restriction of $\phi$ to the $j$-dimensional face $\sigma^j$ of $\sigma^i$ corresponding to $k_j$.) 
\end{enumerate}
\end{Pro}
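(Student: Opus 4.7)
The plan is to run the same induction on $i$ as in Proposition 3.2, but to reinterpret every continuity claim as continuity after composition with the quotient map $q: M^c \to T$. The construction of the extension $\phi(k): \sigma^i \to M^c$ is literally identical: realise $\sigma^i$ as the cone with apex $C$ over $\partial \sigma^i$; for each $t \in [0,1)$ identify the ``sphere of radius $1-t$'' around $C$ with the boundary map assembled from the extensions $\phi(k_t(l))$ of the $(i-1)$-subcages of $k_t = H_i(k,t)$ (using the induction hypothesis at dimension $i-1$); send $C$ itself to the single point $p \in M^c$ that is the image of the constant cage $H_i(k,1)$. The base case $i=2$ is immediate: a $2$-cage is the boundary map of $\sigma^2$, and $H_2(k,\cdot)$ directly yields a map of $\sigma^2$ into $M^c$, whose composition with $q$ is continuous by definition of weak filling.

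For the induction step, assume (1)--(3) hold in dimensions $\le i-1$ and that the induced map $k\mapsto q\circ\phi(k)$ on cage space is continuous. With $\phi(k)$ defined as above we have to verify (i) continuity of $q \circ \phi(k)$ on $\sigma^i$; (ii) compatibility with faces; (iii) preservation of closed $\delta$-locally convex sets; and (iv) continuous dependence of $q \circ \phi(k)$ on $k$ after projection. Away from $C$, (i) is immediate from the induction hypothesis together with continuity of $q \circ H_i$ in $t$. Assertion (ii) is formal from the cone construction and the analogous induction hypothesis, since the restriction of $\phi(k)$ to a face $\sigma^j$ is assembled out of extensions of subcages that coincide with $\phi(k_j)$. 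For (iii), property (2) of the weak filling keeps $H_i(k,t)$ inside $V$ for $t \in [0,1)$, so by induction each $\phi(k_t(l))$ lies in $V$, while the limit point $p$ at $C$ lies in the closure $\ol V = V$. Assertion (iv) follows by a diagonal argument combining the induction hypothesis on continuous dependence with the joint continuity of $q \circ H_i$.

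The main obstacle, and the whole point of the weak version, is verifying continuity at the cone vertex $C$. One must show that $q(\phi(k)(x)) \to q(p)$ as $x \to C$. Writing $x$ in cone coordinates on the ``sphere of radius $1-t$'', we have $\phi(k)(x) = \phi(k_t(l))(y)$ for some face label $l$ and some $y \in \sigma^{i-1}$. Continuity of $q \circ H_i$ gives $q \circ k_t(l) \to q \circ k_1(l)$ as $t \to 1$; since $k_1$ is a constant cage at $p$, so is each subcage $k_1(l)$, and property (3) of the weak filling applied in dimension $i-1$ forces $H_{i-1}(k_1(l),\cdot)$ to be constant, whence $\phi(k_1(l))$ is the constant map to $p$. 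The induction hypothesis (iv) then yields $q \circ \phi(k_t(l))(y) \to q(p)$ uniformly in $y$ and $l$, which is exactly the continuity of $q \circ \phi(k)$ at $C$. The delicate point that makes this work is that even though $H_i(k,t)$ can jump between different ends of $M$ as $t \to 1$, such jumps are invisible after applying $q$, since $q$ collapses all of $\bigcup_i M^c_i$ to a single point of $T$.
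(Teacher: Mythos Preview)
Your proposal is correct and follows exactly the approach the paper intends: the paper's own proof of this proposition is the single sentence ``This proposition can be proven exactly as the previous one,'' and you have faithfully carried out that adaptation of the proof of Proposition~3.2, replacing each continuity assertion by continuity after composing with $q$. Your added discussion of continuity at the cone vertex $C$, and the observation that discontinuities of $H_i$ occur only within $\bigcup_i M_i^c$ and hence vanish under $q$, supplies precisely the extra detail the weak case requires.
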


This proposition can be proven exactly as the previous one.

{\it Strong and weak fillings of flowers} are defined exactly as strong/weak fillings of cages. In fact, the definition of fillings of cages
implies that its restriction to flowers is a filling of flowers. In section 3.2 we will demonstrate that, vice versa, given a filling of flowers we can 
easily extend it to fillings of cages.

We would like to finish this section by observing that combining the previous proposition with the results in the previous section we obtain the following
proposition:

\begin{lemma} \label{no filling of cages}
Let $M$ be a complete non-compact $n$-dimensional manifold
with locally convex ends.
There exists $\delta_0>0$, $L_0>0$, such that for all $\delta \in (0, \delta_0]$ and $L\in [L_0, \infty)$ there is no weak filling of cages of length $\leq L$.
\end{lemma}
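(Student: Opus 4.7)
The plan is to argue by contradiction, formalizing the sketch given at the end of Section 2. Suppose that for arbitrarily small $\delta_0>0$ and arbitrarily large $L_0>0$ one could find $\delta\in(0,\delta_0]$ and $L\in[L_0,\infty)$ admitting a weak filling of $(n+1)$-cages of length $\leq L$. I would fix the thresholds so that $\delta_0$ is smaller than the $\epsilon$ furnished by the locally convex ends condition at each $\Sigma_i$, and smaller than the convexity radii of the intrinsic metrics on $\bar M_0$ and on each $\Sigma_i$. Set $L_0=\tilde L:=\max\{\diam(\bar M_0),\max_{j\geq 1}\diam(\Sigma_j)\}$, with diameters measured intrinsically. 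Fix such a filling for the rest of the argument.

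Next I would build the candidate extension $\psi':CS\to M^c$ that Lemma \ref{no extension} forbids. Triangulate $\bigcup_i\Sigma_i$ so that every simplex has diameter much smaller than $\delta$, extend to an equally fine triangulation of $\bar M_0$, and triangulate $CS$ as the cone with apex $P_0$ over $S$, having first coned each $\Sigma_i$ to a tip $q_i$. Assign $\psi'(P_0)$ to be any vertex $p\in M_0$ and each $\psi'(q_i)$ to be any vertex $v_i\in\Sigma_i$. Map every $1$-simplex to a minimizing geodesic between the images of its endpoints, taken intrinsically in $\bar M_0$ or in $\Sigma_i$ according to which stratum contains the edge. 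The choice of $L_0$ guarantees that every such edge has length $\leq L$, so the $1$-skeleton of every top-dimensional simplex of $CS$ is an $(n+1)$-cage of length $\leq L$; and by construction the $1$-skeleton of every $(n+1)$-simplex sitting in $C_i$ lies on $\Sigma_i\subset\bar M_i$, a closed $\delta$-locally convex subset of $M$. Apply Proposition \ref{from cages to simplices} to each such cage. Property (2) makes the per-simplex fillings agree on shared subfaces, property (3) forces fillings of $\Sigma_i$-cages into $M_i^c$, and gluing produces a map $\psi':CS\to M^c$ with $\psi'(C_i)\subset M_i^c$ such that $q\circ\psi'$ is continuous.

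The final step is to recognize $\phi':=\psi'\circ j:\bar M_0\to M^c$ as homotopic (as a map of pairs) to the inclusion $\phi$. Every small simplex of the triangulation of $\Sigma_i$ is filled by a simplex of comparable diameter, which by property (3) lives in $\bar M_i$ inside a $\delta$-convex ball centered near one of its vertices. Inside each such ball the normal exponential from $\Sigma_i$ provides a geodesic deformation from the filled simplex back to its image on $\Sigma_i$; these local homotopies glue across shared faces via property (2) and extend through the fine triangulation of $\bar M_0$ (where filled simplices are likewise contained in convex balls around their vertices) to give a pair-homotopy $\phi'\simeq\phi$ that keeps each $\Sigma_i$ in $\bar M_i\subset M_i^c$. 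Lemma \ref{no extension} now contradicts the existence of $\psi'$, completing the proof.

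The main obstacle will be the last homotopy: one must coherently glue the normal-direction deformations near different $\Sigma_i$ with the deformation across $\bar M_0$ relative to the $1$-skeleton, while keeping $\Sigma_i$ mapped into $\bar M_i$ throughout and respecting the way $\psi'$ behaves at the cone tips $q_i$ and at $P_0$. This works precisely because $\delta_0$ was chosen smaller than the convexity radii of both $\bar M_0$ and of each $\Sigma_i$, so every filled simplex lies in a geodesically convex ball where a straight-line geodesic homotopy to the original simplex is canonical; the locally convex ends condition keeps the motion on the correct side of each $\Sigma_i$.
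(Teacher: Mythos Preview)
Your argument is essentially the paper's own proof: assume a weak filling exists, build the triangulation of $CS$ and the map on its $1$-skeleton as in Section~2, extend via Proposition~\ref{from cages to simplices}, verify the restriction to $\bar M_0$ is pair-homotopic to $\phi$ (you spell out the normal-direction homotopy the paper only sketches), and invoke Lemma~\ref{no extension} for the contradiction. One small slip: with each edge of length $\leq\tilde L$ an $(n+1)$-cage has total length up to $\binom{n+2}{2}\tilde L$, so you should take $L_0=\binom{n+2}{2}\tilde L$ rather than $L_0=\tilde L$ to guarantee every cage you build lies in $Cage^L_{n+1}$.
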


\begin{proof}
Consider the triangulation of $CS$ and a map $\psi$
from the $1$-skeleton of $CS$ to $M^c$ described in the previous section. 
If there exists a filling of cages, then by Proposition \ref{from cages to simplices} 
we can extend $\psi$ to a map defined on all of $C$, such that
the composition $q \circ \psi$ is continuous. Moreover,
the restriction of $\psi$ to $\overline{M}_0$ is homotopic
to the inclusion map $\phi$. 
By Lemma \ref{no extension} we obtain a contradiction.
\end{proof}

\subsection{From cages to flowers.}

Recall that some edges of a cage can be mapped by means of a constant map. Of course, in this case 
both endpoints are mapped to the same point. It can happen that a set of edges forming a spanning tree of a cage is being mapped
to the same point $p$ of $M$. In this case all vertices of the cage are mapped to $p$, and all other edges become loops based at $p$.
(Some of these loops can also be constant.) Recall, that we call such cages with one vertex {\it flowers}, and their non-constant loops {\it petals}.

Our next observation is that:
\begin{lemma} There exists  a deformation retraction  of the space of $i$-cages to the space of flowers with at most
${i(i-1)\over 2}$ petals with the following properties:
\par\noindent
(1) The image of a cage in $M$ does not change during this deformation. 
\par\noindent
(2) If the lengths of all edges
of a cage do not exceed $l$, then the lengths of all edges of the cage during the deformation (including the flower at the end of
the deformation) do not exceed $3l$. Therefore, the lengths of all cages during the deformation do not exceed ${i(i-1)\over 2}l$.
\end{lemma}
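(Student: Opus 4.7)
The plan is to collapse a spanning tree of the combinatorial $1$-skeleton of $\sigma^i$ continuously, pushing the remaining edges into loops at the collapsed vertex. Because a flower is defined in terms of the highest-numbered vertex $v = v_{i+1}$, I take the star spanning tree centered at $v$, whose edges are $e_k = v v_k$ for $k = 1, \dots, i$; the $\binom{i+1}{2} - i = \frac{i(i-1)}{2}$ non-tree edges will become the petals.

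Given a cage $f\colon \mathrm{sk}_1(\sigma^i) \to M$, set $p = f(v)$ and let $\gamma_k\colon[0,1]\to M$ be the broken geodesic $f|_{e_k}$ parametrized proportionally to arc length with $\gamma_k(0) = p$ and $\gamma_k(1) = f(v_k)$; for $1 \le j < k \le i$ let $\alpha_{jk} = f|_{v_j v_k}$. Define a homotopy $t \mapsto f_t$, $t \in [0,1]$, by
\[ f_t(v) = p, \qquad f_t(v_k) = \gamma_k(1-t), \]
mapping each tree edge $e_k$ to $\gamma_k|_{[0, 1-t]}$ (reparametrized to $[0,1]$) and each non-tree edge $v_j v_k$ to
\[ \gamma_j|_{[1-t, 1]}^{-1} \ast \alpha_{jk} \ast \gamma_k|_{[1-t, 1]}, \]
also reparametrized. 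At $t = 0$ the tails are constant and $f_0 = f$; at $t = 1$ every vertex and every tree edge is sent to $p$, so $f_1$ is a flower based at $v$ whose (at most) $\frac{i(i-1)}{2}$ petals are the loops $\gamma_j \ast \alpha_{jk} \ast \gamma_k^{-1}$.

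The remaining checks are routine. Continuity of $(f, t) \mapsto f_t$ follows from continuity of restriction and concatenation of arc-length parametrized broken geodesics. For property (1), the tree edge $e_k$ contributes $\gamma_k|_{[0, 1-t]}$, while for each $k$ the existence (using $i \ge 2$) of a non-tree edge incident to $v_k$ ensures that the complementary piece $\gamma_k|_{[1-t, 1]}$ is also covered; hence $f_t(\mathrm{sk}_1(\sigma^i)) = f(\mathrm{sk}_1(\sigma^i))$ for all $t$. For property (2), the length of $e_k$ at time $t$ equals $(1-t)\mathrm{len}(\gamma_k) \le l$, while a non-tree edge has length at most $t\,\mathrm{len}(\gamma_j) + \mathrm{len}(\alpha_{jk}) + t\,\mathrm{len}(\gamma_k) \le 3l$. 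If $f$ is already a flower based at $v$ then $\gamma_k \equiv p$, so the tails are constant and $f_t$ equals $f$ up to reparametrization of edges, which ensures the subspace of flowers is preserved (consistent with condition (4) of the filling definition).

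I do not expect a major obstacle; the only mild delicacy is continuity at $t = 1$, when all non-root vertices simultaneously coalesce at $p$. This is handled by the arc-length reparametrization, whose total length depends continuously on $t$, so the edge maps converge uniformly as $t \to 1$.
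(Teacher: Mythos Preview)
Your construction is exactly the one in the paper: slide every vertex $v_k$ toward the top vertex $v_{max}$ along the edge $v_k v_{max}$, shrinking those $i$ edges to points while the eliminated tails get appended to both ends of each remaining edge $v_jv_k$. The only slip is a misplaced inverse in your intermediate formula---the edge from $f_t(v_j)=\gamma_j(1-t)$ to $f_t(v_k)=\gamma_k(1-t)$ should read $\gamma_j|_{[1-t,1]}\ast\alpha_{jk}\ast(\gamma_k|_{[1-t,1]})^{-1}$, consistent with your (correct) $t=1$ petal $\gamma_j\ast\alpha_{jk}\ast\gamma_k^{-1}$---but the argument and all the verifications are otherwise identical to the paper's.
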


\par\noindent
{\bf Remark.} This deformation is unconditional, that is, it does not require any assumptions about $M$.

\begin{proof}
The idea is very simple. All vertices of a cage are numerated.
We just move all vertices but the one with the maximal number to the vertex with the maximal number, $v_{max}$, along the corresponding edge of the cage.
The speed is constant, and chosen so that the all vertices will collide with the maximal one at the moment $t=1$. All edges
between $v_i$ and $v_{max}$ shrink and become constant at $t=1$. On the other hand the segments $v_i(0)v_i(t)$ and $v_j(0)v_j(t)$ that
are being eliminated from $v_iv_{max}$ and $v_jv_{max}$, correspondingly, are being added to the edge $v_iv_j$
at both ends of $v_iv_j$. So, at the moment $t$
the edge between $v_i(t)$ and $v_j(t)$ will be the join of the three segments $v_i(t)v_i$, $v_iv_j$ and $v_jv_j(t)$.
\end{proof}

We make these deformation retractions the initial ``halves" of
$H_i$, and it will remain to ``fill" only the resulting flowers. (This means that given an $i$-cage we use the interval
$[0,{1\over 2}]$ of time to deform this cage to a flower with
the same image as in the proof of the previous lemma. We are going
to use the remaining time to "fill" the resulting flower.) So, we obtain the following corollary.

\begin{corollary}
Given a weak (correspondingly, strong) filling of flowers in $Flwr_j$, $j\leq {n(n+1)\over 2}$, there exists a weak (correspondingly strong) filling of $(n+1)$-cages. If $L$ is finite, then given a weak (correspondingly, strong)
filling of flowers $k$ in $Flwr^{nL}_j$, $j\leq {n(n+1)\over 2}$ via intermediate flowers $H(k,t)$ with length $\leq nL$, there exists a weak (correspondingly, strong) filling of $(n+1)$-cages of length $\leq L$ (via cages of length $\leq nL$).
\end{corollary}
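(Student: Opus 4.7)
The plan is to define $H_i$ by concatenation: use the first half $t \in [0, 1/2]$ to contract an $i$-cage to a flower via the lemma just proved, and use the second half $t \in [1/2, 1]$ to apply the hypothesised filling of flowers. Concretely, let $D_i : Cage^L_i \times [0,1] \to Cage_i$ be the deformation from the preceding lemma, and for each $j \le n(n+1)/2$ let $G_j : Flwr^{nL}_j \times [0,1] \to Flwr_j$ be the given filling of $j$-flowers. For $i \in \{2, \ldots, n+1\}$ set
\[
H_i(k, t) = \begin{cases} D_i(k, 2t), & 0 \le t \le 1/2, \\ G_{j(k)}\bigl(D_i(k, 1),\, 2t-1\bigr), & 1/2 \le t \le 1, \end{cases}
\]
where $j(k) \le \binom{i}{2} \le n(n+1)/2$ is the number of petals of the flower $D_i(k,1)$; the two branches agree at $t = 1/2$ since $G_{j(k)}(\cdot, 0)$ is the identity.

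Next I would verify the four axioms of a filling of $(n+1)$-cages. Axiom (1), that $H_i(k,0) = k$ and $H_i(k,1)$ is constant, is immediate from the corresponding properties of $D_i$ and $G_j$. For axiom (3) (constant cages are fixed), every edge of a constant $k$ is constant, so $D_i(k, t) = k$ throughout, and then $G$ fixes constant flowers. For axiom (4) (flowers remain flowers), if $k \in Flwr^L_j$ then all edges of $k$ incident to $v_{\max}$ are already constant, so $D_i(k,t) = k$ and hence $H_i(k,t) = G_j(k, 2t-1) \in Flwr_j$ by the flower filling's axiom (4). Axiom (2) (preservation of $\delta$-locally convex sets) uses the fact that $D_i$ never enlarges the image of $k$ in $M$: the deformation only transfers subsegments between adjacent edges, so if $k$ has image in a $\delta$-locally convex set $V$ then so does $D_i(k,t)$ for every $t$, after which $G$ preserves $V$ by its own axiom (2).

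Continuity of the concatenation is inherited from the continuity of $D_i$ and $G_j$ (in the weak case, after post-composition with the quotient $q : M^c \to T$, since $D_i$ is continuous in the honest sense and only the flower-filling piece needs the projection). For the length bookkeeping: the preceding lemma shows that the total length of $D_i(k,t)$ is at most $(i-1)$ times the length of $k$, hence at most $nL$ for $i \le n+1$; in particular $D_i(k,1)$ lies in $Flwr^{nL}_{j(k)}$, where the hypothesised filling $G_{j(k)}$ applies and by hypothesis keeps subsequent cages of length $\le nL$.

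The main obstacle, such as it is, lies in axiom (4): one must check that on the subspace $Flwr^L_j \subset Cage^L_i$ the deformation $D_i$ genuinely acts as the identity (or at worst by trivial reparametrisation), since otherwise the interpolation $H_i(k,t)$ for $t \in [0, 1/2]$ could leave the $j$-flower stratum. This is immediate once one notes that in a flower the edges incident to $v_{\max}$ are already constant maps, so the prescribed ``sliding of vertices along these edges'' moves nothing in $M$; but it must be spelled out carefully because the formal definition of $D_i$ manipulates the combinatorial cage. Once that is handled, the remaining axioms, continuity, and length bounds are bookkeeping, and the corollary follows.
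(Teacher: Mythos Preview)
Your proposal is correct and follows essentially the same approach as the paper, which also prescribes using the deformation of the preceding lemma on $[0,\tfrac12]$ and the given flower filling on $[\tfrac12,1]$; your verification of the four axioms and the length bound $(i-1)L \le nL$ is more detailed than the paper's one-line justification. One cosmetic point: rather than indexing by the petal count $j(k)$, it is cleaner to apply the single filling $G_{\binom{i}{2}}$ throughout (using the inclusions $Flwr_j \subset Flwr_{\binom{i}{2}}$), so that no question of continuity in $k$ arises when the number of nonconstant petals jumps.
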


{\bf Remark:} Our definition of filling of cages does not contain any restrictions for lengths of intermediate cages. Yet below we will be discussing a weak length non-increasing
filling of flowers of bounded length. The last assertion means that the existence of a weak filling of $(n+1)$-cages follows from the existence of a weak filling of flowers of length bounded by a larger value $nL$ via flowers of length bounded by the same constant. (The exact value of this larger value is not important for us, any $c(n,L)$ instead of $nL$
will work for us.) 

Combining  the previous corollary with Lemma \ref{no filling of cages} we see that:

\begin{lemma} \label{no filling of flowers}
Let $M$ be a complete non-compact $n$-dimensional manifold
with locally convex ends.
There exists $\delta_0>0$, $L_0>0$, such that for all $\delta \in (0, \delta_0]$ and $L \in [L_0, \infty)$ there does NOT exist a weak filling of flowers of length $\leq L$.
\end{lemma}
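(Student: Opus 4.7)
The plan is to argue by contradiction, reducing the statement directly to Lemma \ref{no filling of cages} via the preceding corollary, which promotes a weak filling of flowers into a weak filling of $(n+1)$-cages at the price of a bounded multiplicative expansion of edge lengths. Since both of these ingredients are already in hand, the proof is essentially bookkeeping.

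Concretely, I would take $\delta_0^{\mathrm{cage}}$ and $L_0^{\mathrm{cage}}$ to be the constants furnished by Lemma \ref{no filling of cages}, and then set $\delta_0 := \delta_0^{\mathrm{cage}}$ and $L_0 := n\, L_0^{\mathrm{cage}}$ for the present statement. The exact factor is unimportant; any constant absorbing the length expansion $c(n)$ from the cage-to-flower deformation of Lemma 3.? works. Suppose, toward a contradiction, that for some $\delta \in (0, \delta_0]$ and some $L \in [L_0, \infty)$ a weak filling of flowers of length $\leq L$ exists. Applying the preceding corollary with input length parameter $L/n$ then yields a weak filling of $(n+1)$-cages of length $\leq L/n$, with intermediate cages staying of length $\leq L$. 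Since $L/n \geq L_0^{\mathrm{cage}}$ and $\delta \leq \delta_0^{\mathrm{cage}}$, this contradicts Lemma \ref{no filling of cages}.

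There is no genuine obstacle in the argument: the deep content lives in Lemma \ref{no filling of cages} (which itself rests on Lemma \ref{no extension} and the impossible-filling diagram of Section 2), while the cages-to-flowers reduction only uses that the deformation retraction of cages onto flowers does not enlarge the image of the cage. This latter property is automatic, because the deformation slides vertices along the existing edges, leaving the set-theoretic image unchanged; hence any $\delta$-locally convex region containing the original cage continues to contain every intermediate cage along the deformation. Matching the $\delta$'s is therefore free, and only the linear length expansion needs to be absorbed in the choice of $L_0$.
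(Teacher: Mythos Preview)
Your proposal is correct and is exactly the argument the paper gives: the lemma is stated immediately after the sentence ``Combining the previous corollary with Lemma \ref{no filling of cages} we see that:'', and your write-up simply makes the bookkeeping (the choice $L_0 = n\,L_0^{\mathrm{cage}}$ to absorb the length expansion, and the free matching of $\delta$'s since the cage-to-flower deformation preserves images) explicit. There is nothing to add.
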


\subsection{Flows on cages and fillings.}
Everything that we will say in this section about cages can be verbatim repeated for flowers. We will start from the following definition.

\forget
\begin{definition}
Given $\delta>0$ a curve-shortening flow on $n$-cages on $M$ is a family of continuous maps $F_i: Cage_i\times [0,\infty)\longrightarrow Cage_i$ for all $i\in\{2,\ldots, n\}$ such that for all $k, t, s>0$ $F_i(F_i(k,t),s)=F_i(k,t+s)$ and
that satisfies the following properties:
\par\noindent
(1) $F_i$ is length non-increasing, i.e. for each $k\in Cage_i$ and $t_1, t_2$ such that $t_1<t_2$, the length of $F_i(k,t_2)$ does
not exceed the length of $F_i(k,t_1)$. Moreover, if $k$ is not a stationary cage (in particular, not a constant cage), then the length of $F_i(k,t)$ is strictly less than
the length of $k$ for all positive $t$.
\par\noindent
(2) Let $k$ be a non-stationary $i$-cage. Then there exists an open neighbourhood $U$ of $k$ in the space of $i$-cages and positive $\lambda$ and $\epsilon$ such that for each $i$-cage $k_1\in U$ 
$
length(k_1)-length(F_i(k_1,\lambda))
\geq \epsilon.$
\par\noindent
(3) If an $i$-cage $k$ is a $j$-flower, then for all $t$ $F_i(k,t)$ is a $j$-flower.
\par\noindent
(4) If $k$ is stationary (possibly constant), then $F_i(k,t)=k$ for all $t$.
\par\noindent
(5) For each point $p$ there exists $r=r(p)>0$ such that if an $i$-cage $k$ is contained in the metric ball of radius $\rho\leq r$ centered at $p$,
then for some $t=t(\rho,p)>0$ $F_i(k,t)$ is a constant cage (= a point). Moreover, $\lim_{\rho\longrightarrow 0}t(\rho,p)=0$. 
\par\noindent
(6) If $C$ is a convex or $\delta$-locally convex subset of $M$, and an $i$-cage $k\subset  C$, then $F_i(k,t)\subset C$ for all $t.$
\end{definition}

Note that the only place where the chosen value of $\delta$ appears in this definition
is property (6). Essentially, we want the flow to preserve the ends $\bar M_i$, $i>1$,
under the condition of the main theorem.

Observe, that property (2) implies that for each compact set $W$ in $M$ there exists $t_W$ such that for all $t\geq t_W$ the image of $F_i(k,t)$ will be {\it outside} $W$ providing that the following two conditions hold: (1) There are no non-trivial stationary cages in the closure $\tilde{W}$ of the $length(k)$-neighbourhood of $W$; and (2) none of cages $F_i(k,t)$ are points (=constant cages). 
Indeed, if this were not true, we would have a sequence of $i$-cages $F_i(k, t_n)$ of bounded length in the $length(k)$-neighbourhood of $W$ with
$t_n\longrightarrow\infty$. Using Ascoli-Arzela theorem we find a limit of a subsequence that we can denote $k_\infty$ in the closure of the $length(k)$-neighbourhood of $W$, and $length(k_\infty)$ is the limit of the decreasing sequence $length(F_i(k,t_n))$ as $n\longrightarrow \infty$.
Condition (2) implies that $k_\infty$ is stationary or a point. If it is a point, this point must be in $\tilde{W}$. Also, for all sufficiently large $n$
$F_j(k,t_n)$ will be sufficiently close to this point to ensure that $F_j(F_j(k,t_n),t)$ will become a point for a finite value of $t$ (property (5)). This conclusion contradicts our assumptions. Thus, we proved the following lemma:

\begin{lemma}
If $\delta>0$, and $F$ is a curve-shortening flow on $n$-cages on a complete non-compact Riemannian manifolds $M$ defined for this value of $\delta$, $i\leq n$, $k$ an $i$-cage, and $M$ does not contain a non-trivial stationary $i$-cage, then there is the following alternative:
\par\noindent
(1) $F_i(k,t)$ is a point for all $t\geq t_0$, where $t_0$ is some real number;
\par\noindent
(2) For each compact domain $W\subset M$ there exists a real $t_0$ such that $F_i(k,t)$ is outside of $W$ for all $t\geq t_0$.
\end{lemma}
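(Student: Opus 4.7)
The plan is to argue by contradiction, formalizing the heuristic sketch given in the paragraph immediately preceding the lemma. Note first that by the semigroup property together with property (4), alternative (1) is equivalent to saying ``$F_i(k,t)$ becomes a constant cage at some finite time $t_0$''. Assume that both alternatives fail. Then $F_i(k,t)$ is never a point, and there exist a compact set $W \subset M$ and a sequence $t_n \to \infty$ with $F_i(k, t_n) \cap W \neq \emptyset$ for every $n$. By property (1), each cage $F_i(k, t_n)$ has length at most $L := \operatorname{length}(k)$, so it lies in $\tilde W$, the closure of the $L$-neighborhood of $W$; by Hopf-Rinow, $\tilde W$ is compact.

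Next I would extract a limit cage $k_\infty$. Reparametrize each edge of each $F_i(k,t_n)$ with constant speed to get a family of uniformly Lipschitz maps of the $1$-skeleton of $\sigma^i$ into the compact set $\tilde W$. Ascoli-Arzel\`a produces, after passing to a subsequence, a limit $k_\infty \subset \tilde W$. Using the monotonicity of length from property (1) and lower semicontinuity of length under uniform convergence, we get $\operatorname{length}(k_\infty) = \ell_\infty := \lim_{t\to\infty}\operatorname{length}(F_i(k,t))$.

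Now I would show that $k_\infty$ must be stationary. If not, property (2) provides a neighborhood $U$ of $k_\infty$ in $\operatorname{Cage}_i$ and constants $\lambda, \epsilon > 0$ such that $\operatorname{length}(k_1) - \operatorname{length}(F_i(k_1, \lambda)) \geq \epsilon$ for all $k_1 \in U$. For $n$ large, $F_i(k, t_n) \in U$, and the semigroup property gives $\operatorname{length}(F_i(k, t_n)) - \operatorname{length}(F_i(k, t_n + \lambda)) \geq \epsilon$, contradicting the fact that $\operatorname{length}(F_i(k, t))$ converges to $\ell_\infty$ as $t \to \infty$. The hypothesis of the lemma rules out non-constant stationary $i$-cages, so $k_\infty$ is a constant cage at some point $p \in \tilde W$. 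Applying property (5), choose $\rho \leq r(p)$; for $n$ large enough, $F_i(k, t_n) \subset B(p, \rho)$, so $F_i(k, t_n + t(\rho, p)) = F_i(F_i(k, t_n), t(\rho, p))$ is a constant cage, contradicting the assumption that alternative (1) fails.

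The main obstacle is the Ascoli-Arzel\`a step: one must work in a topology on $\operatorname{Cage}_i$ in which a uniformly bounded sequence of cages has a convergent subsequence with a limit that is still an $i$-cage (i.e.\ a broken-geodesic map of the $1$-skeleton). Since nothing in the definition of curve-shortening flow forbids the number of break-points on an edge from growing without bound, one should either enlarge the state space to Lipschitz $1$-skeleton maps (and verify that properties (2) and (5) are robust under this enlargement, since they are formulated for elements of $\operatorname{Cage}_i$) or impose/extract the regularity needed from the specific flow in use. Once $k_\infty$ is secured inside $\operatorname{Cage}_i$, the remainder of the argument is a direct bookkeeping exercise with properties (1)-(5).
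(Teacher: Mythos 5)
Your proposal is correct and follows essentially the same route as the paper's own argument: contradiction, extraction of an Ascoli--Arzel\`a limit $k_\infty$ of the cages $F_i(k,t_n)$ in the closed $\mathrm{length}(k)$-neighbourhood of $W$, property (2) forcing $k_\infty$ to be stationary or a point, and property (5) closing the case of a point. The compactness/topology caveat you flag for the Ascoli--Arzel\`a step is a genuine subtlety that the paper also passes over without comment.
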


In the second case we will say that $F_i$ drags $k$ to infinity. Exactly the same proof works in the case when $F$ is a curve-shortening flow on flowers.

Now we are going to prove the existence of the weak filling of cages (or, equivalently, flowers) assuming the existence of a curve-shortening flow
on cages (or even only on flowers).

\begin{lemma} Suppose there exists a weak curve-shortening flow $F$ on $(n+1)$-cages on a complete non-compact Riemannian manifold $M^n$.
Assume that $M^n$ does
not contain a non-trivial stationary $i$-cage for all $i \leq n+1$.
Then there exists 
a weak filling $H_i:Cage_i\times [0,1]\longrightarrow M^c$ for $i\in\{1,\ldots ,n+1\}$. 
\end{lemma}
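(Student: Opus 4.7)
The plan is to construct $H_i$ directly from the flow $F_i$ by reparametrizing the time interval $[0,\infty)$ to the half-open interval $[0,1)$ and then extending continuously, after composition with the quotient $q\colon M^c\to T$, at $s=1$. Fix a continuous increasing bijection $\sigma\colon[0,1)\to[0,\infty)$ and set $H_i(k,s)=F_i(k,\sigma(s))$ for $s\in[0,1)$. Properties (2)--(4) of a weak filling are inherited on this range from the corresponding properties of the flow, and weak continuity on $Cage_i\times[0,1)$ is inherited from the weak continuity of $F_i$.

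To define $H_i(k,1)$, invoke the preceding alternative lemma, which applies because $M^n$ admits no non-trivial stationary $i$-cage. For each $k$, either (a) $F_i(k,t)$ is a constant cage for all sufficiently large $t$, so that the limit $p(k)=\lim_{t\to\infty}F_i(k,t)\in M$ exists; or (b) $F_i$ drags $k$ to infinity. In case (b), length-monotonicity of the flow bounds the diameter of the image of $F_i(k,t)$ uniformly in $t$; since $M_0$ is bounded and the ends $M_j$ are pairwise disjoint, this forces $F_i(k,t)\subset\bar{M}_{j(k)}$ for a unique $j(k)\geq 1$ once $t$ is large enough, so $F_i(k,t)$ converges in $M_{j(k)}^c$ to its point at infinity $\infty_{j(k)}$. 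In both cases, define $H_i(k,1)$ to be the constant cage at the limit $p(k)\in M^c$.

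The main obstacle is verifying weak continuity of $H_i$ at $s=1$, which requires controlling the branching between cases (a) and (b) for nearby cages. If $p(k_0)$ lies in the interior of $M_0$, then for $t$ large, $F_i(k_0,t)$ is the constant cage at $p(k_0)$, contained in a small ball inside $M_0$; fixed-time weak continuity of $F_i$ together with property (5) (small cages collapse in bounded time) forces all $k$ near $k_0$ into case (a) with $p(k)$ close to $p(k_0)$ in $M^c$, so continuity at $(k_0,1)$ already holds in $M^c$. Otherwise $p(k_0)\in\bar{M}_j\subset M_j^c$ (possibly $p(k_0)=\infty_j$) for some $j\geq 1$. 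Then $F_i(k_0,t)$ eventually lies deep inside $\bar{M}_j$, and fixed-time weak continuity forces $F_i(k,t)$ into $\bar{M}_j$ for $k$ near $k_0$ and a suitable finite $t$; property (6) then traps the whole future trajectory inside the $\delta$-locally convex set $\bar{M}_j\subset M_j^c$. Since $q$ collapses $M_j^c$ to the single point $m\in T$, we get $q(H_i(k,s))=m=q(H_i(k_0,1))$ for $(k,s)$ sufficiently near $(k_0,1)$, and weak continuity holds. The decisive structural fact is that any discontinuous branching in asymptotic behavior occurs inside $\bigcup_{j\geq 1}M_j^c$, which $q$ collapses to a single point; this is precisely why the filling is only weak, not strong.
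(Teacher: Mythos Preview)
Your approach differs from the paper's in one essential respect. Rather than reparametrizing by a bijection $\sigma\colon[0,1)\to[0,\infty)$ and arguing about asymptotics, the paper first uses compactness of the set of cages of length $\leq L$ lying in a compact neighbourhood of $\bar M_0$ to produce a \emph{uniform finite time} $t(L)$ with the property that for every such cage $k$, either $F_i(k,t(L))$ is already a constant cage in $\bar M_0$ or $F_i(k,t(L))$ lies entirely outside $M_0$. It then sets $H_i(k,s)=F_i(k,\lambda(k)\,s)$ for $s<1$, where $\lambda(k)$ depends only on the length of $k$, and defines $H_i(k,1)$ according to this dichotomy at that single finite time. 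This reduces the weak-continuity check at $s=1$ to the behaviour of $F_i$ near one fixed time, avoiding any passage to the limit $t\to\infty$. Your route trades this compactness step for a direct pointwise limit analysis; both strategies are legitimate, and yours is arguably more elementary in that it skips the uniform-time estimate.

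Your argument is essentially correct, but there is a gap in the boundary case. When $p(k_0)\in\Sigma_j=\partial M_0\cap\partial M_j$, the claim that ``$F_i(k_0,t)$ eventually lies deep inside $\bar M_j$'' is false: the trajectory converges to a point on $\Sigma_j$, not to the interior of $M_j$. Consequently you cannot conclude $F_i(k,t_0)\subset\bar M_j$ for all $k$ near $k_0$---a nearby cage may lie partly in $M_0$ at time $t_0$---and your trapping argument via property~(6) applied to $\bar M_j$ breaks down. The repair is to argue exactly as in your first case: trap the future trajectory in a small convex metric ball $B$ about $p(k_0)$ using property~(6) for $B$ rather than for $\bar M_j$. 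Since $q$ is continuous and $q(p(k_0))=m$, the image $q(H_i(k,s))$ then lies in the small neighbourhood $q(B)$ of $m$ for all $(k,s)$ near $(k_0,1)$, which is precisely what weak continuity requires.
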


\begin{proof}
Choose a positive integer $L$. We will construct $H_i$ for all $i$-cages of length $\leq L$.
Afterwards, we will somewhat modify this construction to make it continuous of $i$-cages of all length.

The basic idea if obvious: We will define $H_i(k,t)$ as $F_i(k,\lambda(k)t)$ for an appropriate $\lambda(k)$ that continuously depends on $k$ and $t\in [0,1)$. In other words,
we follow $F_i$ for time $\lambda(k)$ and then stop. $H_i(k,1)$ will coincide with $F_i(k,\lambda(k)t)$, if $F_i(k,\lambda(k))$ is a point (= a constant cage) in $M_0$ 
and will be the point at infinity of $M_i^c$, $i>0$ otherwise. 

The problem is how to determine $\lambda(k)$, so that this definition would make sense, and will work. Consider the domain $W$ that is defined
as the closed 
$L$-neighbourhood of $\bar M_0$. Recall, that for each $i$-cage $k$ $F_i$ either contracts it to a point in a finite time, or moves $k$ to infinity so that it eventually leaves forever any compact domain in $M$. 
\par
Consider the set $O_i(L)$ of $i$-nets $k$ of length $\leq L$ such that $F_i$ contracts $k$ to a point in (open set) $M_0$. If $F_i$ moves an $i$-net $k$ completely outside of $M_0$ at any moment of time, then $k$ cannot return to $M_0$ and is not in $O_i(L)$. Therefore, if $k\in O_i(L)$, then $k$ is contained in the interior of $W$, and, moreover, for each $t$ $F_i(k,t)$ is also in the interior of $W$. Consider the closure $\bar O_i(L)$ of $O_i(L)$.
If $k\in \bar O_i(L)\setminus O_i(L)$, it cannot be dragged to infinity by $F_i$ as this would force all sufficiently close $i$-nets in $O_i(L)$
to be dragged outside of $W$. By the same reason, $F_i(k,t)$ is contained in $W$ for all $t$. Similarly, $k\in \bar O_i(L)$ cannot be contracted by $F_i$ to a point outside of $\bar M_0$. Therefore,
all $i$-nets in $\bar O_i(L)$ are contracted by $F_i$ to points (=constant $i$-nets) in $\bar M_0$. For each $k\in \bar O_i(L)$ denote the minimal $t$ such that $F_i(k,t)$ is a constant net by $t(k)$. It is easy to see that $t(k)$ is upper-semicontinuous. Therefore, there exists $t_1(L)=\max_{k\in \bar O_i(L)}t(k)$. 

Consider the set $U_i(L)$ of $i$-nets $k$ of length $\leq L$ and contained in the interior of $W$ with the following property: There exists $t$ such that $F_i(k,t)$ is contained in the complement of $M_0$. Consider the infimum of such values of $t$, and denote it by $T(k)$. It is clear that for all $t\geq T(k)$ $F_i(k,t)$ will be outside of $M_0$ (that is, $F_i(k,t)$ will be in one of the closed sets $\bar M_j$, $j\geq 1$). We would like
to prove that $t_2(L)=\sup_{k\in U_i(L)}T(k)$ is finite. We are going to prove this
by contradiction. Assume the existence of a sequence $k_m\in U_i(L)$
with unbounded sequence of $T(k_m)$. Consider a convergent subsequence $\{k_{m_l}\}$ and denote its limit by $k_\infty$. If $F_i$ drags $k_\infty$ to infinity,
or to a point outside of $\bar M_0$, we see that at some finite moment of time $t$ $F_i(k_{m_l},t)$ will be outside of $\bar M_0$ for all sufficiently large values of $l$, 
and we obtain a contradiction. Similarly, it is easy to see that $F_i$ cannot contract $k_\infty$ to a point in $M_0$, as in this
case $F_i(k_{m_l}, t)$ will be in $M_0$
for all sufficiently large values of $t$ and $l$. If $F_i$ contracts $k_\infty$ to a point $p$ in $\bar M_0\setminus M_0$,
then $F_i(k_{m_l},t)$
will be in the ball of radius $r$ centered at $p$ mentioned in property (5) of the definition of curve-shortening flows for all sufficiently large
values of $l$. Therefore, $T(k_{m_l})\leq t+t(r,p)$ for $t(r,p)$ defined in property (5) and all sufficiently large $l$, and we again obtain a
contradiction. 

Define $\lambda(k)$ as $t(L)=\max\{t_1(L), t_2(L)\}+1$. 
Thus, $\lambda(k)$ is a function of only the length of $k$, and can be majorized by the same value
for all nets $k$ of length $\leq L$. Observe, that if $k\not\in \bar O_i(L)$, then either $F_i$ drags $k$ to the infinity, or $F_i$ contracts $k$ to a point outside of $M_0$.  
%

The discontinuity of $H_i(k,t)$ is a priori possible only at $t=1$, and for $k\not\in \bar O_i(L).$ Moreover, if $F_i(k,t(L)-0.5)$ is a
stationary net, then $H_i(k,t)$ is continuous at $(k,1)$. In particular, we have continuity of $H_j$ at $(k,1)$ for all $k\in \bar O_i(L)$.
If $k$ has length $\leq L$ but is not in $W$, or $k$ is in the interior $U^0_i(L)$ of $U_i(L)$ regarded as a subspace of the space of all $i$-nets of length $\leq L$ in $W$, then the composition of the quotient map that sends the complement of $M_0$ to a point
$q$ will map $F_i(U,(t(L),\infty))$ to $q$ for some open neighbourhood $U$ of $k$. Thus, if $F_i$ drags $k$ to infinity, or contracts
it to a point outside of $\bar M_0$, then $k\in U^0_i(L)$, and we have the continuity after composing with the quotient map. It remains to
consider only $i$-nets $k$ of length $\leq L$ in $W$ that are neither in $\bar O_i(L)$, nor in $U^0_i(L)$. We are going to prove by contradiction
that such $i$-nets do not exist. Indeed, if $k$ is such a net, then
$F_i$ contracts $k$ to a point in $\bar M_0\setminus M_0$, 
yet there exist arbitrarily close $i$-nets $k_j$ 
of length $\leq L$ in $W$ such that $F_i(k_j,t)$ intersects $M_0$ for all $t$. This implies that $F_i$ contracts each $k_j$ in a finite time
to a point in $M_0$, and, therefore, $k\in \bar O_i(L)$ - a contradiction.

Note that if we would replace $t(L)$ by any larger value, the above discussion will still remain valid. Moreover, the composition with the quotient map will be the same. Therefore, we can choose any a continuous function $\Lambda(L)$ on $(0,\infty)$ such that 
$\Lambda(L)\geq t(L)$,
and define $\lambda(k)$ as $\Lambda($length$(k))$. 
We can use this $\lambda(k)$ in the definition of $H_i(k,t)$ instead of $t(L)$.

\end{proof}
\forgotten

In order to prove our main theorem, we would like to consider curve-shortening
flow on cages of length $\leq L$ for some $L$, yet we need a weaker notion. Assume that $M$ is a complete non-compact manifold with locally convex ends (as in Definition 1.1).
Recall that $M^c\setminus M$ is a finite collection of points. Namely, it contains
one point $q_i$ for each end $M_i$. Let $cage_i^L$ denote $Cage^L_i\setminus\cup_j \{q_i\}$, where $q_j$ are regarded as constant $i$-cages.
One can map some pairs $(k,t)$ outside of the open $L$-neighbourhood
of $\bar M_0$ to points in $M^c\setminus M$ (regarded as constant cages not in $cage^L_i$) subject to the following conditions:

\begin{definition}
Assume that $M$ is as in definition 1.1, and $L$ is finite. A weak $L$-curve-shortening flow on $n$-cages on $M$ is a family of  maps $F_i: cage^L_i\times [0,\infty)\longrightarrow Cage^L_i=cage^L_i\bigcup M^c\setminus M$ for all $i\in\{2,\ldots, n\}$ such that for all $k, t, s>0$ $F_i(F_i(k,t),s)=F_i(k,t+s)$ as long as the images of all $F_i$ in this formula are in $Cage^L_i$ and
that satisfies the following properties:
\par\noindent
(0.1) $F_i$ is continuous on $F_i^{-1}(cage^L_i)$;
\par\noindent
(0.2) Assume that for some $k\in cage^L_i$ and for all positive $t$ $F_i(k,t)=q_j$ for some $j$. Then $k$ is contained in $M_j$ and has empty intersection with the closed $L$-neighbourhood of $\bar M_0$. On the other hand, there exists $\epsilon_1>0$ such that for each $k\in cage^L_i$
contained in $M_j$, $j>0$, such that $k$ does not intersect the closed
$(L+\epsilon_1)$-neighbourhood of $\bar M_0$, $F_i(k,t)=q_j$ for all $t>0$.
\par\noindent
(0.3) If $F_i(k,t)=q_j$, then for all $T>t$ $F_j(k,T)=q_j$.
\par\noindent
(1) $F_i$ is length non-increasing, i.e. for each $k\in cage_i$ and $t_1, t_2$ such that $t_1<t_2$, the length of $F_i(k,t_2)$ does
not exceed the length of $F_i(k,t_1)$. Moreover, if $k$ is not a stationary cage (in particular, not a constant cage), then the length of $F_i(k,t)$ is strictly less than
the length of $k$ for all positive $t$. 
\par\noindent
(2) Let $k$ be a non-stationary non-constant $i$-cage. Then there exists an open neighbourhood $U$ of $k$ in the space of $i$-cages and positive $\lambda$ and $\epsilon$ such that for each $i$-cage $k_1\in U$ 
$
length(k_1)-length(F_i(k_1,\lambda))
\geq \epsilon.$
\par\noindent
(3) If an $i$-cage $k$ is a $j$-flower, then for all $t$ $F_i(k,t)$ is a $j$-flower (possibly $q_j$ regarded as a $j$-flower).
\par\noindent
(4) If $k$ is stationary (possibly a point), then $F_i(k,t)=k$ for all $t$.
\par\noindent
(5) For each point $p$ different from points $q_j$ there exists $r=r(p)>0$ such that if an $i$-cage $k$ is contained in the metric ball of radius $\rho\leq r$ centered at $p$,
then for some $t=t(\rho,p)>0$ $F_i(k,t)$ is a constant cage (= a point). Moreover, this point is not one of the points $q_j$, and $\lim_{\rho\longrightarrow 0}t(\rho,p)=0$. 
\par\noindent
(6) If $C$ is either $\bar M_j$ for $j\geq 1$ or a convex metric disc centered at 
a point $x\in \bar M_0$ of radius $<conv(x)$, and $k\in C$, then $F_i(k,t)$ is in $C$
for all values of $t$. Here $conv(x)$ denotes the convexity radius of $M$ at $x$.
\end{definition}

\begin{lemma} Let $F$ be a curve-shortening flow on $(n+1)$-cages of length $\leq L$ on a complete non-compact Riemannian manifold $M^n$ with locally convex ends $M_i$, $i=1,2,\ldots$.
Assume that $M^n$ does
not contain a non-trivial stationary $i$-cages for all $\leq n+1$.
Then there exists 
a weak filling $H_i:Cage^L_i\times [0,1]\longrightarrow M^c$ for $i\in\{1,\ldots ,n+1\}$. 
\end{lemma}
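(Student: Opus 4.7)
The plan is to define $H_i(k,t) = F_i(k,\,\Lambda(\mathrm{length}(k))\,t)$ for $t\in[0,1)$, where $\Lambda\colon(0,L]\to(0,\infty)$ is a continuous majorant of a uniform time bound to be constructed below, and to set $H_i(k,1)$ equal to the terminal constant cage that $F_i$ has produced by time $\Lambda(\mathrm{length}(k))$---either an actual point of $M$ or one of the ideal points $q_j\in M^c\setminus M$. Axioms (1)--(4) of the weak filling then follow directly from the flow axioms (4), (6), (3), the semigroup identity, and $H_i(k,0)=k$. The real content is (a) producing a $\Lambda$ such that $H_i(k,1)$ is always well-defined as a constant cage, and (b) verifying that $q\circ H_i$ is continuous, where $q\colon M^c\to T$ is the quotient map collapsing $\bigcup_i M^c_i$ to a point.

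The key analytic lemma I would prove first is a dichotomy: for each $k\in cage^L_i$, either (A) $F_i(k,t)$ is a constant cage in $M_0$ for all large $t$, or (B) $F_i(k,t)=q_j$ for some $j\geq 1$ and all large $t$. Suppose neither alternative held; then by (0.2) every $F_i(k,t)$ would meet the closed $(L+\epsilon_1)$-neighborhood of $\bar M_0$, so by Ascoli--Arzel\`a together with length non-increase (property 1) some subsequence converges to a cage $k_\infty$. Property (2) forces $k_\infty$ to be stationary, hence (by the standing assumption) a constant cage; but then property (5) would contract the nearby $F_i(k,t)$ to a single point in bounded additional time, contradicting the assumed failure of (A). I would then split $cage^L_i$ into the set $O_i(L)$ of type-(A) cages with contraction time $t(k)$ and the set $U_i(L)$ of type-(B) cages with escape time $T(k)$, establish upper semicontinuity of these times (using properties (5), (0.2), (0.3) and continuity of $F_i$ on $F_i^{-1}(cage^L_i)$), and show by another Ascoli--Arzel\`a argument that the suprema $t_1(L)$ of $t(\cdot)$ on $\bar O_i(L)$ and $t_2(L)$ of $T(\cdot)$ on $\bar U_i(L)$ are finite: a sequence violating either bound would subsequentially converge to a cage violating the dichotomy. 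Any continuous $\Lambda$ with $\Lambda(\ell)\geq\max(t_1(\ell),t_2(\ell))+1$ then works.

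The remaining task, and the step I expect to require the most care, is continuity of $q\circ H_i$. The only locus of possible discontinuity is at $t=1$ for cages $k$ lying on the common boundary of $\bar O_i(L)$ and $\bar U_i(L)$: along one approach the terminal value is a point of $\bar M_0$, along the other it is some $q_j$. The point is that such a boundary $k$ must itself belong to $\bar O_i(L)$, because if $F_i(k,t_0)=q_j$ for some $t_0$, continuity of $F_i$ on the open set $F_i^{-1}(cage^L_i)$ combined with axioms (0.2) and (0.3) would place an entire neighborhood of $k$ into $U_i(L)$, contradicting that $k$ is a limit of $O_i(L)$. Hence the terminal value of $H_i(k,1)$ is a point of $\bar M_0\subset M^c$, and after composing with $q$ it collapses to the same distinguished point of $T$ as all the nearby $q_j$'s, giving continuity of $q\circ H_i$. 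The preservation of flowers, of each $\bar M_j$, and of small convex metric balls required by the filling definition is then inherited verbatim from flow axioms (3) and (6), so no further work is needed.
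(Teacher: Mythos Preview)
Your approach matches the paper's: show each cage eventually becomes constant under the flow, extract a uniform time bound by compactness, and define $H_i$ by rescaling the flow to $[0,1)$. The paper organizes this into three short steps and uses a single constant $T=T(i)$ rather than your length-dependent $\Lambda$, phrasing the key dichotomy at time $T$ as ``either $F_i(k,T)$ is a point, or its image misses $\bar M_0$ entirely.''

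One imprecision to fix: your dichotomy ``(A) constant cage in $M_0$'' versus ``(B) equal to some $q_j$'' is not exhaustive. The flow can contract $k$ to a point of some $\bar M_j$ still lying inside the $(L+\epsilon_1)$-neighbourhood of $\bar M_0$, and such a cage falls into neither of your classes; so $O_i(L)\cup U_i(L)$ need not be all of $cage^L_i$. This gap propagates into your continuity argument. You conclude that a boundary cage has terminal value ``a point of $\bar M_0$\ldots which collapses under $q$ to the distinguished point,'' but interior points of $M_0$ do \emph{not} collapse under $q$. The missing observation is that a terminal point in the open set $M_0$ would, by property~(5) and continuity, force an entire neighbourhood of $k$ into $O_i(L)$, so at a genuine boundary cage the terminal point must lie on $\partial M_0=\bigcup_i\Sigma_i$, where $q$ does collapse it. The paper sidesteps this altogether by allowing the constant cage to land anywhere in $M$ and splitting at time $T$ according to whether the result is already a point or merely lies outside $\bar M_0$; continuity of $q\circ H_i$ is then immediate because anything outside $\bar M_0$, point or not, already projects to the distinguished point of $T$.
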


\begin{proof}

If $k=q_j$, the $H_i(k,t)=q_j$ for all $t$. Now, it is sufficient to consider only cages $k\in cage^L_i$.

\par\noindent

{\it Step 1. For each $i$-cage $k$ of length $\leq L$ there exists $t=t(k)$ such that $F_i(k,t)$
is a point (possibly, one of the points $q_j$, $j\geq 1$).}

Indeed, the alternative is that there exists an unbounded increasing sequence $t_m$ such
that $F_i(k, t_m)$ is not a point. Therefore, these cages of length $\leq L$ are in the closed $(L+\epsilon_1)$-neighbourhood of $\bar M_0$. Pass to the limit of an appropriate subsequence. This limit, $k_\infty$, cannot be a point, as in this case
for a sufficiently large $F_i(k,t_m)$ will be in a small neighbourhood of this point,
and the flow will contract $F_i(k,t_m)$ to a point different from $q_j$ in a finite time. If $k_\infty$ is a non-trivial closed curve,then property (2) implies that
the flow will simultaneously decrease the lengths of curve $F_i(k, t_m)$ for all sufficiently large $m$ in time
$\lambda$ by at least $\epsilon>0$. Yet lengths of $F_i(k, t)$ decrease to the infimum equal to the length
of $k_\infty$, and we obtain a contradiction.

{\it Step 2. For each $i$ here exists $T=T(i)$ such that for all $i$-cages $k$ of length $\leq L$ either
$F(k,T)$ is a point, or its image does not intersect $\bar M_0$.}

Assume that there exists an infinite sequence of $i$-cages $k_m$ of
length $\leq L$
and an unbounded increasing sequence of times $t_m$ so that $F_i(k_m,t_m)$ is
not a point. This implies that all $k_m$ are in the (compact) closed $(L+\epsilon_1)$-neighbourhood of $\bar M_0$.
Passing to a subsequence, if necessary, we can assume that the sequence $k_m$
converges to an $i$-cage $k_*$. If $k_*$ is a point (that cannot be one of points $q_j$), then all sufficiently
large values of $m$ $k_m$ will be in a fixed small neighbourhood of this point.
Now property (5) of the definition above implies that the flow contracts all of them
to points in a uniformly bounded time, and we obtain a contradiction. If $k_*$ is not a point, but the flow contracts $k_*$ to a point $p$ different from all $q_j$, then
at some moment of time $t$ and all sufficiently large $m$ $F_i(k_m,t)$ will be
in a fixed small neighbourhood of $p$, and property (5) in the definition again yields
the contradiction. 
It remains to consider the case when the flow contracts $k_*$ to one of the points $q_j$. In a finite (possibly zero) time the flow moves $k_*$ out of the closed $L$-neighbourhood of $\bar M$, yet we choose this moment of time $t_*$ so that $F_(k_*,t_*)$ is not yet $q_j$. Therefore, for some $m_0$ and all $m>m_0$ $F(k_m,t_*)$
are also not in the closed $L$-neighbourhood of $\bar M_0$. Therefore, none of
$F_i(k_m,t_*)$ intersect $\bar M_0$. For $l=1,\ldots , m_0$ choose $t_l=t(k_l)$ so that $F_i(k_l,t_l)$ is a point. Now define $T$ as $\max\{t_*, t_1, \ldots, t_{m_0}\}$.

{\it Step 3.} Now we can define the weak fillings as follows. For each $i$
define $H_i(k,t)$ as $F_i(k, T(i)t)$ for all $t<1$. If $F_i(k,T(i))$ is a point different from all points $q_j$, we define $H_i(k,1)$ as the constant
cage $p$. Finally, if $H_i(k,T(i))$ does not intersect $\bar M_0$, it is contained
in some $M_j$ for some $j=j(k)>1$. In this case we define $F_i(k,1)$ as $q_j$.

\end{proof}

In the next section we are going to prove that:

\begin{theorem} Let $M^n$ be a complete $n$-dimensional Riemannian manifold with $\delta$-locally convex ends for some positive $\delta$. Assume that there is no
non-trivial geodesic flower on $M^n$. Then there for each finite $L$ and each $N$ there exists
a weak curve-shortening flow on $N$-cages of length $\leq L$ on $M^n$.

\end{theorem}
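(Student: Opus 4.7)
The plan is to build the flow by iterating a Birkhoff-type one-step shortening that combines edge subdivision, geodesic replacement, and vertex relaxation, and then grafting on a cutoff mechanism that routes cages escaping into the ends to the points $q_j$. The hypothesis of no non-trivial geodesic flower will enter through property (2) of the definition, via a local uniform length drop.

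First fix $\rho_0 \in (0, \delta)$ smaller than the infimum of the convexity radius over the closed $(L+1)$-neighborhood $W^+$ of $\bar M_0$. For $k \in cage_i^L$ define a one-step map $\Phi(k)$ by: (i) subdivide each edge of $k$ into consecutive arcs of length at most $\rho_0$; (ii) replace each arc by the unique minimizing geodesic between its endpoints; (iii) move each vertex $v$ a small fixed fraction of the way along $\sum_j T_j(v)$, where $T_j(v)$ is the unit tangent at $v$ to the $j$th incident edge, capping the displacement at half the shortest incident edge length. All three operations are continuous in $k$, length non-increasing, and preserve the combinatorial type of $k$, so $\Phi$ respects flower structure and satisfies (1), (3), (4) tautologically. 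Since each subdivision segment has length less than $\delta$, the locally convex ends condition forces $\Phi(k) \subset \bar M_j$ whenever $k \subset \bar M_j$, and the convexity radius bound gives (5) and (6). Define $F_i(k,t)$ for $t \in [n, n+1]$ by linearly interpolating from $\Phi^n(k)$ to $\Phi^{n+1}(k)$ along the minimizing geodesics of step (ii); this makes $F_i$ a continuous semigroup on $F_i^{-1}(cage_i^L)$.

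Property (2) is where the hypothesis enters. If $k$ is non-stationary and non-constant, then either some edge is not a geodesic (so (ii) produces a strict length drop) or some vertex is unbalanced (so (iii) does), and in either case $\mathrm{length}(k) - \mathrm{length}(\Phi(k)) \geq \epsilon(k) > 0$ with $\epsilon$ a continuous function of $k$; the bound $\epsilon(k)/2$ then propagates to a neighborhood of $k$. Any non-trivial stationary cage all of whose vertices collapse to a single point of $M$ is a geodesic flower, hence ruled out by hypothesis; stationary cages with multiple distinct vertex-images (genuine geodesic nets) may still appear as fixed points of $\Phi$, but they are consistent with property (4) and do not obstruct the existence of the flow itself.

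To handle the ends and define the jump to $q_j$, augment the construction as follows: at each integer time $n$, check whether $\Phi^n(k) \subset M_j \setminus W^+$ for some $j \geq 1$, and if so set $F_i(k, t) = q_j$ for all $t \geq n$. Cages that from the outset lie in $M_j$ outside the closed $(L + \epsilon_1)$-neighborhood of $\bar M_0$ are sent to $q_j$ immediately, enforcing (0.2) for a suitable $\epsilon_1 > 0$. The locally convex ends condition ensures that once $\Phi^n(k)$ has crossed $\Sigma_j$ and entered $M_j$ by more than $\rho_0$, each subsequent application of $\Phi$ keeps it inside $\bar M_j$, so the routing is consistent with property (0.3). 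The main obstacle I expect is verifying continuity of $F_i$ across the boundary between "eventually contracts inside $W^+$" and "eventually routed to $q_j$" required by (0.1): I would handle this by choosing $\epsilon_1$ small enough relative to the uniform length drop of (2) that the strict monotonicity of $\mathrm{length}(\Phi^n(k))$ prevents a cage from re-entering $W$ once its orbit has penetrated the collar $M_j \cap (W^+ \setminus W)$, so borderline cases are excluded by monotonicity. An Arzel\`a-Ascoli compactness argument on cages of length at most $L$ inside $W^+$ then controls the limiting behavior of nearby orbits and yields the remaining properties.
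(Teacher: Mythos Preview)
Your overall architecture matches the paper's: Birkhoff-type subdivision and geodesic replacement, followed by moving the determining points along the negative gradient of length (the sum of incident unit tangents), together with a cutoff that routes cages far from $\bar M_0$ to the points $q_j$. The paper actually carries this out on \emph{flowers} rather than general cages (and then invokes the cage-to-flower reduction), but that is a matter of convenience rather than substance.

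There is, however, a genuine gap in your step (iii). Your rule ``cap the displacement at half the shortest incident edge length'' is exactly where the construction breaks down near the degenerate strata of $cage_i^L$. Consider a flower whose base point carries one very short petal of length $\eta$ together with several long petals that do \emph{not} satisfy the stationarity condition. Your cap forces the base-point displacement to be $O(\eta)$, so the length drop from step (iii) is $O(\eta)$ per iteration; as $\eta\to 0$ you lose the uniform decrease required by property~(2), even though the limiting flower (with the short petal collapsed) is non-stationary and, by hypothesis, not a geodesic flower. Worse, the unit tangents to the short petal at the base point have no continuous limit as $\eta\to 0$, so continuity of $\Phi$ across this stratum is also in jeopardy. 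The paper confronts precisely this issue: it covers the space of piecewise-geodesic flowers by open sets indexed by which petals are ``short,'' on each set defines a different vector field (short petals are \emph{excluded} from the sum at the base point, and points on short petals are moved by the parallel transport of the base-point vector plus a contraction-towards-base term), and glues these fields with a subordinate partition of unity. This stratified construction is what guarantees that the first variation stays strictly negative uniformly on compact sets of non-stationary flowers. Your proposal does not supply a mechanism of this kind, and the capping trick you use instead actively destroys the uniform length drop you need.

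A secondary issue: your step (i), ``subdivide each edge into consecutive arcs of length at most $\rho_0$,'' does not fix the number of subdivision points, so $\Phi$ jumps when an edge length crosses an integer multiple of $\rho_0$. The paper avoids this by fixing once and for all the number $N$ of interior subdivision points per petal (so that each becomes $N{+}1$ equal segments), which makes the Birkhoff deformation manifestly continuous on $cage_i^L$.
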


Combining this theorem with Lemma 3.4 and the flower version of the previous lemma,
we obtain Theorem 1.2. Thus, it remains only to prove Theorem 3.7. We are going to do this in the next section.

\forget
Observe that:
\par\noindent
(1) The set of $i$-cages that are contracted to a point is open. Correspondingly, the set
of $i$-cages that are being moved to infinity is closed.
\par\noindent
(2) The set of $i$-cages that are contracted by $F_i$ to a point in (open set) $M_0$ 
is open.
Correspondingly, the set of $i$-cages that are either dragged to infinity of contracted
to a point outside $M_0$ 
is closed. The intersection of this set with the
set of $i$-cages of length $\leq L$ and contained in $W$ is compact. Denote this set by $C_1(L)$.
\par\noindent
(3) The set of $i$-cages that are contracted by $F_i$ to a point in $\bar M_0$ 
is closed.
Denote the set of such $i$-cages that are, in addition, contained in $W$ and have length $\leq L$ by $C_2(L)$. Note that the Ascoli-Arzela theorem implies that $C_2(L)$ is compact.
\par\noindent For each $i$-cage $k$ that is being contracted to a point by $F_i$ let
$t(k)$ denotes the time of the contraction to a point, i.e. the minimal $t$ such that
$F_i(k,t)$ is a constant $i$-cage. Note that $t(k)$ is an upper-semicontinuous function.
Therefore, it attains a maximum on $C_2(L)$. Denote this maximum by $t_2(L)$.
\par\noindent
(4) For all $i$-cages $k$ such that for all sufficiently large $t$ $F_i(k,t)$ is outside of 
$\bar M_0$,
let $T(k)$ be the infimum of times $t$ such that 
$F_i(k,t)$ is outside of 
$\bar M_0$. 
Note, the $T(k)$ is also an upper semi-continuous function. 
(Indeed, if $F_i(k,t)$ is in the (open) complement to 
$\bar M_0$, then the same will be true for all sufficiently close to $k$ $i$-cages.)
Further, this set contains compact set $C_1(L)$.
Denote the maximum of $T(k)$ on $C_1(L)$ by $t_1(L)$.

\par
Now we can define $\lambda(L)$ as $\max\{t_1(L), t_2(L)\}+1$. 
Now define $H_i(k,t)$ as $F_i(k,\lambda(L) t)$, if $t<1$;
$F_i(k,\lambda(L))$, if $t=1$,
and  $F_i(k,\lambda(L))$ is a point; and the point at infinity of $M_i^c$, $i>1$, if $t=1$, and $F_i(k,\lambda(L))$ is not a point.

If $F_i(k,\lambda(L))$ is not a point, then it is not contained in $W$. This is the key point. Indeed, if (the image of) $k$ has empty intersection with $M_0$, then it is contained in the closure of one of $M_i$, $i\geq 1$. Property (6) of a curve-shortening flow implies that $F_i(k,t)$
will stay in $\bar M_i$ (and outside of $M_0$) for all values of $t$. If $k$ intersects $M_0$ and has length $\leq L$ then it is contained in $W$. Now $F_j(k,t)$ either eventually contacts $k$ to a point in $W$, or leaves the interior of $W$ forever (and never again intersects $M_0$).
At the moment of time $\lambda(L)$ one of these two events will have already happened.

As the length of $F_i(k,\lambda(L))$ does not exceed $L$,
in the last case
$F_i(k,\lambda(L))$ is outside of $M_0$. The discontinuity is possible only for $t=1$ and only in the last case. Clearly, the discontinuity disappears after we take the composition with the quotient map collapsing the part of $M$ outside of $M_0$ to a point.
\forgotten

\forget
Recall that a triangulation of a subset of $M$ is fine if for each simplex $\sigma^i$ of the triangulation and each point $x\in \sigma^i$
$\sigma^i$ is contained in a metric ball $B(x,r)$ centered at $x$ of radius $r$ less than the convexity radius of $M$ at $x$, $conv(x).$
We will call such simplices {\it small}. When we fill the $1$-skeleton, $k$, of a small $1$-simplex in $\bar M_0$ as above,
at no time it can exit the convex ball $B(x,r)$. Therefore, the flow $F_i$ will contract $k$ to a point in $B(x,r)$, and $H_i(k,*)$ will be continuous on $[0,t]$. Further, $H_i$ will be a continuous function of both arguments on the product of a small neighbourhood of $k$ and $[0,1]$, and the map $\phi(k)$ constructed in Proposition 3.4 and filling $k$ with an $i$-simplex will be continuous.
Recall, that pairs of points in $B(x,r)$ can be connected by unique minimizing geodesics that continuously depend on the endpoints.
Therefore, there is an obvious homotopy between the identity map of $\sigma^i$, and the map $\phi(k)$ (or, more precisely, the composition of the map of $\sigma^i$  to the standard simplex and then $\phi(k)$). This homotopy will move the image of each point along the unique minimizing geodesic connecting it with the corresponding point.

This provides the missing details in the proof of main theorem by contradiction at the end of the previous section modulo the construction of a curve-shortening flow for flowers.
This flow will be constructed in the next section.
\forgotten


\forget
\subsection{ Adaptation of the idea of filling of cages to our situation.}

\par\noindent
{\it 1.} Proposition 2.5 implies that the remaining part of the construction of a continuous map $\Psi$ from $W$ to $M^c$
in section 2.1 (and, thus, of our main theorem) would follow from the following assertion:
{\it 
Assume that $M$ is a complete non-compact $n$-dimensional Riemannian manifold with locally convex ends without non-constant geodesic nets.
Then $M$ admits a strong filling of cages.} The only missing part will be a numeration of all vertices
in all $i$-cages for all $i$.This can be achieved simply by fixing some enumeration
of the considered triangulation of $M$, and enumerating vertices in each $i$-cage in accordance with the order of vertices in the chosen triangulation.

However, the above assertion may not be true since there may exist
a non-contractible cage in $M$ that every length decreasing
deformation pushes further and further into one of the ends of $M$.
Also, we do not need $\Psi$ to be continuous. 
Recall, that only the composition of $\Psi$ with the quotient map from $M^c$ to $T=\bar M_o/(\bigcup \Sigma_i)$ obtained by identifying $\bigcup_i M^c_i$ to a point is required to be continuous. Below we are able to construct
the (weak) filling of cages that develops discontinuities away from $\bar M_0$. 
As the result, $\Psi:W\longrightarrow M^c$
will have discontinuities, but only at points in $\Psi^{-1}(M^c\setminus \bar M_0)$. Moreover, for each point $w\in W$ , where $\Psi$
is discontinuous, and each open neighbourhood $V$ of the point $m=q(M^c\setminus M_0)$ of $T$ there exists an open neighbourhood $U$ of $w$, such that $q(\Psi(U))\subset V\bigcup\{m\}$. As, the result the composition
of $\Psi$ and the quotient map $q$ becomes continuous at $w$.

We are going to be more specific about discontinuities of the cage filling that we are going to allow. Assume that the condition (1) in the definition of strong filling of $(n+1)$-cages is relaxed as follows:
Instead of demanding that $H_i(k,1)$ is a constant cage, we demand that either $H_i(k,1)$ is a constant cage in $M$, or $H_i(k,1)$ is a cage in one of the ends $M_j$ outside of some
open neighbourhood $V$ of $\bar M_0$.
Then we can define a discontinuous filling of cages $\tilde H_i$ as follows: First, rescale all $H_i$ so that the time interval would become $[0,{1\over 2}]$ rather than 
$[0,1]$. Then extend $\tilde H_i$ to the interval $[0,1)$ as a constant map on $[{1\over 2}, 1)$. Finally, define $\tilde H_i(k,1)$ as $H_i(k,1)$, if $H_i(k,1)$ is a constant cage in $V$, 
and as $c_l$, if $H_i(k,1)$ is in 
$(M \setminus V) \cap M_l$. 
Recall that $c_l$ denotes the point used for the one-point compactification of $\bar M_l$. We need to verify that the maps $\Phi$ defined using $\tilde H_i$ (instead of $H_i$) as in the proof of the Proposition 2.5 becomes continuous after we compose
it with the quotient map $q:M^c\longrightarrow T$.

Indeed, there are no discontinuities in $t\in [0,{1/2}]$ part of the filling corresponding to $H_j$, as well as in the $t\in [{1/2}, 1)$ part. The
discontinuities that arise at $t=1$ are
due to fillings of the interior ``halves" of some simplices by constant maps to $c_i$, and we need to verify that this process does not create any ``unwanted" discontinuities
near various types of points $(k,w)$, where $k$ is a $j$-cage, and $w\in\sigma^j$.
Recall that the discontinuities are allowed to happen only at the points $w$ in $\phi^{-1}(M^c\setminus V)$. Now, note that if $H_j(k,1)$ is outside of $V$, then
(a) for all $j$-cages $k'$ sufficiently close to $k$, $H_j(k',1)$ is outside of $\bar M_0$; (b) For all $j'$-subcages of $k$, $j'<j$, $H_{j'}(k',t)\in M^c_l$ for some
$l\in 1,\dots,e$; (c) the same
will be true for all $j'$-subcages of all cages $k'$ sufficiently close to $k$. These two observations easily imply the continuity of $q\circ \Phi$.

\bigskip\noindent
\forget
{\it 2.} Note that all maps $H_i$ in the definitions of strong filling and filling of $(n+1)$-cages, in fact, depend on $n$. So, it would be more appropriate to denote
these maps as $H^{n+1}_i$ to emphasize the dependence on the dimension of the simplices generating the continuous class of cages. Further, note that
we are going to apply the previous proposition, or rather its strengthening discussed in the previous subsection, to construct $\Psi:W\longrightarrow M^c$ filling
certain $(n+1)$-cages.
Therefore, whenever all maps
$\Phi:Cage_i\times \sigma^i\longrightarrow M^c$ need to be continuous (in the usual sense or a weak sense discussed above in subsection 2.2.A.1), when $i\leq n$, there is no need for any kind 
of continuity, when $i=n+1$. When $i=n+1$, it is just sufficient to define $\phi(k)$ on the (image to $M$ of the) $1$-skeleton $k$ of each considered (singular)
$(k+1)$-dimensional simplex relevant to the continuation to $W$.
Finally, we will need  to fill either $(n+1)$-cages that are either in $\bar M_0$ or in $\bar M_i$ for some $i\geq 0$.
For $(n+1)$-cage that are in $\bar M_0$ we have an upper bound for the length of each edge, namely, twice the diameter of $\bar M_0$ in the inner metric.
All $(n+1)$-cages in $\bar M_i$ can be just mapped by $\Psi$ to the point $c_i$. Of course, these creates the discontinuity as some of the relevant $(n+1)$-simplices
share a face (of dimensions up to $n$) with an $(n+1)$-cage with the image in $\bar M_0$ that has a non-empty intersection with $M_i$. Yet the property 2 of the definition of (strong)
filling of cages implies that the filling of any $j$-cage (or subcage) outside in $M_0$ will be outside of $M_0$ and, therefore, will be mapped to the same point $m\in T$ as $c_i$.
So, these discontinuities will disappear after taking the composition with the quotient map $q:M^c\longrightarrow T$.
Summarizing, it is sufficient for us to learn how to define fillings of $(n+1)$-cages with the image in $\bar M_0$ with edges of bounded length. Note, however,
that the construction of $\Phi$ involves fillings subcages of $(n+1)$-cages $H_{n+1}(k,t)$. Even when we start from $k$ in $\bar M_0$ and edges of controlled length,
$H_{n+1}(k,t)$ need not be in $\bar M_0$, and, in general, do not satisfy the same bound for the length of edges. Yet they will be at a bounded distance from $\bar M_0$ and will satisfy
{\it some} upper bound for the length of edges. This motivates the following definition:
\par

\begin{Def} For $r\geq 0$, $l$ we call $k$-cages $(k,l,r)$-small, if they are contained in the closed $r$-neighborhood of $\bar M_0$, and
the length of each edge does not exceed $l$.
\end{Def}

Observe, that for each triple $k,l,r$ the sets of all $(k,l,r)$-cages where each edge is parametrized proportionally to the arclength is compact.

Now we formally define {\it filling of $(k,l,r)$-cages} as follows:

\begin{Def} Let $N_i$ denote the set of all $(k,l,r)$-small $i$-cages. For each $j=1,\ldots, k-1$ let
$H^k_{k-j+1}:N_j\times [0,1]\longrightarrow Cage_{k-j+1}$ be a continuous map such that 
\par\noindent
(a) $H^k_{k-j+1}(*,0)$ are identity maps;
\par\noindent
(b) For each $c\in N_j$ $H^k_{k-j+1}(c, 1)$ is either a constant cage or a cage outside $1$-neighborhood of $\bar M_0$;
\par\noindent
(c) $N_{j+1}=\bigcup_{t\in [0,1]} H^k_{k-j+1}(N_j,t).$
\par\noindent
(d) If for some $c,t$ $H^k_{k-j+1}(c,t)$ is outside of $M_0$, then for each $t'>t$, $H^k_{k-j+1}(c,t')$ will be outside of $M_0$.
\par\noindent
This collection of maps is called a strong filling of $(k,r,l)$-small cages. If, instead of requiring that $H_{k-i+1}^k$ are continuous we require only that the composition of $H_{k-i+1}^k$ with the map of $Cage_{jk-i+1}$ induced by the quotient map from $\bar M_0$ to $T=\bar M_0/\cup\Sigma_i$ is continuous, then this
collection of maps is called a {\it (weak) filling of $(k,l,r)$-small cages}.

\end{Def}

\bigskip\noindent
\forgotten
\forget
\subsection{From cages to flowers.} 

Note that some edges of a cage can be mapped by means of a constant map. Of course, in this case 
both endpoints are mapped to the same point. It can happen that a set of edges forming a spanning tree of a cage is being mapped
to the same point $p$ of $M$. In this case all vertices of the cage are mapped to $p$, and all other edges become loops based at $p$.
(Some of this loops can also be constant.) Recall, that we call such cages with one vertex {\it flowers}, and their non-constant loops {\it petals}.

Our next observation is that:
\begin{lemma} There exists  a deformation retraction  of the space of $i$-cages to the space of flowers with at most
${i(i-1)\over 2}$ petals with the following properties:
\par\noindent
(1) The image of a cage in $M$ does not change during this deformation. 
\par\noindent
(2) If the length of all edges
of a cage do not exceed $l$, then the lengths of all edges of the cage during the deformation (including the flower at the end of
the deformation) do not exceed $3l$. 
\end{lemma}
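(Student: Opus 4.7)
The plan is to construct the deformation explicitly by sliding every vertex toward the one with the largest index along the corresponding edge of the cage, while simultaneously ``absorbing'' the freed-up segments into the edges that do not touch the sink vertex. Writing the vertices of the $i$-cage as $v_0,v_1,\ldots,v_i$ with the ordering induced from the standard simplex, set $v_\ast=v_i$. For each $j<i$, parametrize the edge $e_{j\ast}$ from $v_j$ to $v_\ast$ proportionally to arclength as $\gamma_{j\ast}\colon[0,1]\to M$ with $\gamma_{j\ast}(0)=v_j$ and $\gamma_{j\ast}(1)=v_\ast$, and define $v_j(t):=\gamma_{j\ast}(t)$, so that at time $t=1$ every $v_j$ has collided with $v_\ast$.

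At each time $t\in[0,1]$ I declare the deformed cage to have two kinds of edges. First, for each $j<i$ the edge from $v_j(t)$ to $v_\ast$ is the tail $\gamma_{j\ast}|_{[t,1]}$, which collapses to the constant loop at $v_\ast$ when $t=1$. Second, for each pair $j<k<i$, the edge from $v_j(t)$ to $v_k(t)$ is the broken geodesic obtained by concatenating the reverse of $\gamma_{j\ast}|_{[0,t]}$, the original edge from $v_j$ to $v_k$, and $\gamma_{k\ast}|_{[0,t]}$. This is a cage at every $t$; at $t=1$ every vertex sits at $v_\ast$, the $i$ edges that were incident to $v_\ast$ have collapsed to constant loops, and the remaining $\binom{i}{2}=\frac{i(i-1)}{2}$ edges become loops at $v_\ast$, so the terminal object is a flower with at most $\frac{i(i-1)}{2}$ petals.

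To verify property (1), observe that the image in $M$ of the deformed cage at time $t$ is the union of the tails $\gamma_{j\ast}([t,1])$, the heads $\gamma_{j\ast}([0,t])$ (which appear as subsegments of the three-piece edges between non-sink vertices), and the original edges $e_{jk}$ for $j,k<i$. These pieces together exhaust the original image and are contained in it, so the image is independent of $t$. For property (2), the length of each tail edge is at most the original length $l$, and each three-piece edge has length at most $l+l+l=3l$; this is exactly the bound claimed.

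Continuity of the whole procedure in $t$ and in the initial cage is automatic since the three concatenated pieces depend continuously on their data. This is an entirely topological construction that uses no geometric input from $M$; the only feature to watch is that the ``edges'' of a cage are broken geodesics rather than single geodesics, but this is compatible with the definition of a cage used earlier. I do not anticipate any real obstacle in carrying out this plan; the content of the lemma is essentially bookkeeping about how to route the deformation.
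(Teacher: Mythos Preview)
Your proposal is correct and is essentially identical to the paper's own proof: both slide every vertex other than the maximal one along its edge toward $v_{\max}$, shrink the edges incident to $v_{\max}$ to constants, and replace each remaining edge $v_jv_k$ by the three-piece concatenation $v_j(t)v_j\cdot v_jv_k\cdot v_kv_k(t)$, from which properties (1) and (2) follow immediately.
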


\par\noindent
{\bf Remark.} This deformation is unconditional, that is, it does not require any assumptions about $M$.

\begin{proof}
The idea is very simple. All vertices of a cage are numerated.
We just move all vertices but the one with the maximal number to the vertex with the maximal number, $v_{max}$ along the corresponding edge of the cage.
The speed is constant, and chosen so that the all vertices will collide with the maximal one at the moment $t=1$. All edges
between $v_i$ and $v_{max}$ shrink and become constant at $t=1$. On the other hand the segments $v_i(0)v_i(t)$ and $v_j(0)v_j(t)$ that
are being eliminated from $v_iv_{max}$ and $v_jv_{max}$, correspondingly, are being added to the edge $v_iv_j$
at both ends of $v_iv_j$. So, at the moment $t$
the edge between $v_i(t)$ and $v_j(t)$ will be the join of the three segments $v_i(t)v_i$, $v_iv_j$ and $v_jv_j(t)$.
\end{proof}

We make these deformation retractions the initial ``halves" of
$H^k_{k-j+1}$, and it will remain to ``fill" only the resulting flowers. (This means that given a cage from $N_i$ we use the interval
$[0,{1\over 2}]$ of time to deform this cage to a flower with
the same image as in the proof of the previous lemma. We are going
to use the remaining time to ``fill" the resulting flower.)
Let $Flwr_k$ denote the subset of $Cage_k$ formed by all flowers. (In other words, we consider only cages such that their restriction on a spanning tree of the
complete graph with $(k+1)$ vertices is constant.) Similarly, we call elements
of $Flwr_k$ $(k,l, r)$-small, if they are $(k,l,r)$-small as cages.
We call flowers
{\it $N$-flowers (or $N-(k,l,r)$-small flowers)},
if there are at most
$N$ non-constant loops (=petals). Recall, that for each element of $Flwr_k$,
$N\leq {k(k-1)\over 2}$.

The definition of filling of $(k,l,r)$-small cages can be easily amended
to become a definition of filling of $(k,l,r)$-small flowers:
We replace all occurrences of ``cage" or ``cages" by ``flower" or, correspondingly, ``flowers", and replace $Cage_{k-j+1}$ by $Flwr_{k-j+1}$.

Thus, the main lemma in the previous section follows from its analog for flowers:

\begin{lemma} If a complete Riemannian manifold $M^n$ with locally convex ends has 
no geodesic flowers
then for all $r$ and $l$ it
admits a filling of $(n+1,l,r)$-small flowers.
\end{lemma}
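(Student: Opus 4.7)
The plan is to build $F_i$ as a Birkhoff-type curve-shortening semi-flow on the space of $i$-cages of length $\leq L$, augmented by an ``eject to infinity'' rule for cages carried deep into an end. The no-non-trivial-geodesic-flower hypothesis enters only at the very end, to rule out non-trivial stationary orbits of the flow.

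\textbf{Step 1 (basic shortening).} Fix $\epsilon>0$ smaller than $\delta$ and smaller than the convexity radius at every point of the compact closed $(L+1)$-neighborhood of $\bar M_0$. Given a cage $k$, subdivide each edge into pieces of length $\leq\epsilon$ and replace consecutive pieces by the unique minimizing geodesic between their endpoints; simultaneously, move each vertex in the direction opposite to the sum of unit tangent vectors of the incident edges, by an amount proportional to the norm of that sum. This is a length-non-increasing procedure whose stationary points are exactly the geodesic cages. Passing to a continuous-time limit of iterates with shrinking step size (equivalently, solving an autonomous gradient-like ODE on the space of cages whose right-hand side vanishes on geodesic cages) produces a semi-flow $F_i$ satisfying the semigroup property, continuity (0.1), the length-decrease conditions (1)--(2), the fixed-point condition (4), and the small-cage contraction (5). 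The flower condition (3) is automatic: averaging vertices never separates vertices that are already identified as one.

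\textbf{Step 2 (locally convex ends and ejection).} If $k\subset\bar M_j$ for some $j\geq 1$, then adjacent subdivision points on its edges are $\delta$-close, so the replacing minimizing geodesics lie in $\bar M_j$ by the locally convex ends hypothesis; the vertex motion also preserves $\bar M_j$ because its direction is a convex combination of tangent vectors along edges in $\bar M_j$. The analogous reasoning inside a convex metric ball around a point of $\bar M_0$ yields condition (6) in both cases. For the ejection, declare $F_i(k,t):=q_j$ whenever $k\subset M_j$ has empty intersection with the closed $(L+\epsilon_1)$-neighborhood of $\bar M_0$, with $\epsilon_1$ chosen below the collar thickness of the locally convex ends, and propagate this rule forward in time via (0.3). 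Continuity on $F_i^{-1}(cage^L_i)$ is unaffected because the cutoff only moves images at infinity.

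\textbf{Step 3 (no non-trivial stationary cages).} Condition (2) amounts to ruling out non-trivial stationary cages, since otherwise a sequence of non-stationary cages converging to one would violate the local uniform decrease rate. Suppose for contradiction that $k$ is such a non-trivial stationary cage. Then each edge of $k$ is a geodesic and the sum of unit tangents at every vertex is zero. Apply the cage-to-flower sliding deformation of Section 3.2 to $k$: the image in $M$ does not change, so the result is a flower whose petals are concatenations of geodesic arcs, i.e.\ geodesic loops, based at a common basepoint. Because the sliding identifies all original vertices with one, the sum of unit tangents at this basepoint equals the sum over the original vertices of the local tangent sums, which is zero. We have produced a non-trivial geodesic flower, contradicting the hypothesis. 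Hence all non-trivial orbits of $F_i$ must either contract to a point in $\bar M_0$ or be eventually ejected, as needed.

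\textbf{Main obstacle.} The crux is Step 1: upgrading a naive discrete Birkhoff-style shortening (which does not satisfy the semigroup law on the nose) to a genuine continuous-time semi-flow with a locally uniform strict length-decrease rate on non-stationary cages. This requires either a delicate limiting procedure with diminishing step sizes or an explicit gradient-like vector field on the (infinite-dimensional) space of cages with a solid existence-uniqueness theory for its integral curves. A secondary delicate issue is arranging the ejection rule to respect both the semigroup law and the continuity requirement (0.1) across the boundary of the $(L+\epsilon_1)$-neighborhood of $\bar M_0$, where the flow's image transitions between actual cages and points at infinity.
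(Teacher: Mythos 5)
Your overall strategy (a gradient-like shortening flow with an ejection-to-infinity rule, plus compactness to get uniform stopping times) is in the spirit of the paper, but Step~3 contains a genuine gap that breaks the argument. You construct the flow on the space of \emph{cages}, so you must exclude non-trivial \emph{stationary cages}, while the hypothesis only excludes geodesic \emph{flowers}; your bridge between the two is false. Sliding a stationary geodesic cage to a flower (the deformation of Section 3.2) keeps the image fixed, so each petal of the resulting flower is a concatenation of geodesic arcs of the original cage with corners at the old vertices $v_i$, $v_j$; a concatenation of geodesic arcs is not a geodesic loop, and stationarity of a flower requires every petal to be geodesic at its interior points (first variation along the petal), not merely that the tangent vectors balance at the basepoint. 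The balancing condition at the original vertices gives you no smoothness at those corners, so no non-trivial geodesic flower is produced and no contradiction is reached: a stationary geodesic net modelled on a complete graph with several distinct vertices is simply not ruled out by ``no geodesic flowers.'' Note that the lemma you are proving is about flowers only, and the paper exploits exactly this: the curve-shortening flow is constructed directly on the space of flowers (a basepoint together with the subdivision points of the petals, identified with a subset of $(M^n)^K$), so that by the first variation formula the only non-trivial stationary objects of the flow are geodesic flowers, which the hypothesis excludes. The cage-to-flower sliding is used only to reduce the \emph{filling} of cages to the \emph{filling} of flowers, never to convert a stationary cage into a stationary flower.

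Two secondary points. First, your claim in Step~1 that property (3) is automatic is also incorrect for the naive vertex-motion rule: in a flower all abstract vertices sit at one point but have different sets of incident non-constant edges, hence different velocity vectors, so moving each vertex by the sum of unit tangents to its incident edges separates them and destroys the flower structure; the paper avoids this by defining the flow on flowers (one basepoint moved by the sum of tangents over all petals) and by a partition-of-unity treatment of the degenerate strata, in which short petals are dragged along by parallel transport of the basepoint's velocity. Second, the ``main obstacle'' you flag---upgrading discrete Birkhoff steps to a continuous semi-flow on an infinite-dimensional space---is sidestepped in the paper: after an initial Birkhoff deformation, flowers of length at most $L$ are determined by finitely many points, the flow is the flow of a vector field on a domain in $(M^n)^K$ interrupted periodically by Birkhoff re-subdivision and by the ejection rule, and the uniform stopping times needed for the filling come from Arzel\`a--Ascoli compactness arguments that use the absence of geodesic flowers.
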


An obvious idea is to observe that the number of petals of $(n+1,r,l)$-flowers
does not exceed ${k(k-1)\over 2}$, and to prove this lemma using the inductive
proof with respect to the number of petals. The base of the induction
involves flowers with just one petal, or equivalently, closed
cures and says that: 

\begin{lemma} If a complete Riemannian manifold $M^n$ with locally convex ends has no geodesic flowers then for all $r$ and $l$ it
admits a filling of $1-(n+1,l,r)$-small flowers.
\end{lemma}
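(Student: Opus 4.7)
The plan is to derive the filling of $1$-$(n+1,l,r)$-small flowers from an appropriate curve-shortening semiflow on the compact space $N_1$ of such flowers, where a $1$-flower is simply a based loop (possibly constant). The key ingredient is a variant of Birkhoff's curve-shortening process producing a continuous semiflow $F_t$ on $N_1$ that (i) is length non-increasing, with strict length drop on an open neighborhood of any non-stationary loop, (ii) fixes exactly constant loops and smooth closed geodesics (the stationary $1$-flowers), and (iii) respects the decomposition of $M$ by its locally convex ends, in the sense that if a loop lies in $\bar M_j$ for some $j\ge 1$ then its entire orbit also lies in $\bar M_j$. Property (iii) is where the $\delta$-locally convex ends hypothesis enters: each Birkhoff replacement step joins two $\delta$-close points of $\bar M_j$ by the short minimizing geodesic between them, which by hypothesis is automatically contained in $\bar M_j$.

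The core analytic step is a uniform stopping time. I claim there exists $T>0$ such that for every $\gamma\in N_1$, either $F_T(\gamma)$ is a constant loop or $F_T(\gamma)$ lies entirely outside the closed $1$-neighborhood of $\bar M_0$. Suppose this fails; then there exist $\gamma_n\in N_1$ and $t_n\to\infty$ with $F_{t_n}(\gamma_n)$ non-constant and meeting the closed $1$-neighborhood of $\bar M_0$. Since lengths remain bounded by $l$, these flowed loops stay in a fixed compact subset of $M$, and Arzela--Ascoli extracts a limit loop $\gamma_\infty$. The uniform strict length-drop property (i) forces $\gamma_\infty$ to be stationary, hence by (ii) either a point or a non-trivial closed geodesic. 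A point contradicts a uniform positive lower bound on lengths of $F_{t_n}(\gamma_n)$ near $\gamma_\infty$; a non-trivial closed geodesic, viewed as a based loop at any point on it, is a non-trivial geodesic flower with one petal (its two unit tangent vectors at the basepoint sum to zero), contradicting the hypothesis.

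Given such a $T$, I define $H^{n+1}_{n+1}(\gamma,t)=F_{Tt}(\gamma)$ for $t\in[0,1)$; at $t=1$, I set $H^{n+1}_{n+1}(\gamma,1)=F_T(\gamma)$ when the latter is a constant loop in $\bar M_0$, and otherwise equal to the constant cage at the point $q_{j(\gamma)}\in M^c\setminus M$ corresponding to the unique end $\bar M_{j(\gamma)}$ containing $F_T(\gamma)$ (unique by property (iii)). Continuity on $N_1\times[0,1)$ is immediate from continuity of $F_t$, and continuity at $t=1$ after composition with the quotient $q\colon M^c\to T$ holds because once a flowed loop exits $\bar M_0$ it can never return. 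Properties (a)--(d) in the definition of a filling of $(n+1,l,r)$-small flowers are then verified directly from the construction. The main obstacle is the uniform stopping-time argument: one must carry out Birkhoff's construction so that the neighborhood length-drop holds uniformly enough to permit passage to the limit of an arbitrary sequence in $N_1$. This uniformity, together with the end-invariance (iii), is precisely what converts the non-existence of geodesic flowers into the finite uniform stopping time $T$.
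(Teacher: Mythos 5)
Your overall architecture --- a Birkhoff-type, length-nonincreasing semiflow on based loops that preserves the locally convex ends, a uniform stopping time $T$, and a filling obtained by reparametrizing the flow and sending escaped loops to the point at infinity of their end, with continuity only after composing with $q\colon M^c\to T$ --- is the route the paper intends. The gap is in the step you yourself call the core analytic step. Having negated uniformity, you take loops $\gamma_n$ and times $t_n\to\infty$, extract a limit $\gamma_\infty$ of the \emph{flowed} loops $F_{t_n}(\gamma_n)$, and assert that the neighborhood length-drop property forces $\gamma_\infty$ to be stationary. That deduction is valid only when the $\gamma_n$ are one fixed loop: there $t\mapsto \mathrm{length}(F_t(\gamma))$ is monotone, hence convergent, and an $\epsilon$-drop in time $\lambda$ each time the orbit passes near a non-stationary limit contradicts convergence. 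With varying $\gamma_n$, the numbers $\mathrm{length}(F_{t_n}(\gamma_n))$ are not a monotone sequence along a single orbit, and the only consequence of the length-drop property is $\mathrm{length}(F_{t_n+\lambda}(\gamma_n))\le l-\epsilon$, which contradicts nothing; a non-stationary limit of the time-$t_n$ loops is perfectly consistent with your hypotheses. The ``point'' case has the same defect: if $\gamma_\infty$ is a point, the loops $F_{t_n}(\gamma_n)$ are merely short and will collapse shortly \emph{after} time $t_n$, which is compatible with their being non-constant \emph{at} time $t_n$; the ``uniform positive lower bound on lengths'' you invoke is not supplied by the negated statement (it is available only for a single orbit that must remain non-constant at arbitrarily late times).

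The repair is the two-step structure the paper uses in its lemma producing weak fillings from the curve-shortening flow. Step 1: for each individual loop $\gamma$, show there is a finite $t(\gamma)$ after which $F_t(\gamma)$ is a point or stays outside the relevant neighbourhood of $\bar M_0$; here the monotone-length and Arzela--Ascoli argument is sound, and the limit really is a non-trivial periodic geodesic, i.e.\ a one-petal geodesic flower, contradicting the hypothesis. Step 2: for uniformity, pass to a limit $\gamma_*$ of the \emph{initial} loops $\gamma_n$ (these lie in the compact set of loops of length $\le l$ in the closed $r$-neighbourhood of $\bar M_0$), apply Step 1 to $\gamma_*$, and transfer its conclusion to $\gamma_n$ for large $n$ using continuity of the flow on compact time intervals together with two quantitative properties you should build into the flow: a loop contained in a sufficiently small ball about a point $p\in M$ collapses within a time that tends to $0$ with the radius, and a loop that has left $\bar M_0$ (or its $1$-neighbourhood) can never re-enter, by the $\delta$-local convexity of the ends. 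This bounds the stopping times $t(\gamma_n)$, giving the contradiction; with the uniform $T$ in hand, your definition of $H$ and the verification of continuity after composing with the quotient go through as you describe.
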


Substituting here the definitions of $1-(n+1,l,r)$ flowers and their fillings we see that the statement of the lemma can be  ``translated" as:

\begin{lemma} Let $M$ is a complete non-compact manifold with locally
convex ends and core $M_0$. Assume that there are no non-constant
geodesic flowers on $M$.
For each $l, r$ denote the set of closed curves of length $\leq l$ in the 
closure of the $r$-neighbourhood of $M_0$ by $\Lambda(l,r)$, and the space
of all closed curves on $M$ by $\Lambda M$.
Then there exists a continuous map $\alpha:\Lambda(l,r)\times [0,1]\longrightarrow \Lambda M$ such that for each $\gamma$ $\alpha(\gamma,0)=\gamma$
and $\alpha(\gamma, 1)$ is either a constant curve, or a closed curve outside
of the $1$-neighbourhood of $\bar M_0$.
\end{lemma}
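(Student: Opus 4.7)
The plan is to construct $\alpha$ by running a length non-increasing Birkhoff-type curve-shortening semiflow $\Phi:\Lambda M\times[0,\infty)\to\Lambda M$ for a uniformly bounded time $T=T(l,r)$. The hypothesis that $M$ has no non-trivial geodesic flower covers the single-petal case, so in particular $M$ has no non-constant closed geodesic.

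First, I would set up $\Phi$. Pick a mesh $\eta>0$ smaller than $\delta$ and smaller than a uniform lower bound for the convexity radius on the compact neighborhood $W:=\overline{N_{l+1}(\bar M_0)}$, and interpolate continuously between consecutive rounds of Birkhoff subdivision and geodesic replacement. The resulting $\Phi$ should satisfy: (a) $\Phi_0=\mathrm{id}$, and $t\mapsto\ell(\Phi_t(\gamma))$ is non-increasing, strictly so unless $\gamma$ is a closed geodesic; (b) the only fixed curves of $\Phi$ are closed geodesics and constants; (c) each $\bar M_i$, $i\geq 1$, is $\Phi$-invariant, since $\eta<\delta$ forces the inserted minimizing geodesics between $\delta$-close points of $\Sigma_i$ to stay in $\bar M_i$ by the locally convex ends condition; (d) a curve of sufficiently small diameter in $W$ collapses to a point in time $\tau_0\to 0$ as the diameter shrinks.

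Next, for each $\gamma\in\Lambda(l,r)$ I would run a Lusternik--Fet type subsequence argument. Either $\Phi_t(\gamma)$ becomes constant in finite time, or there exist $i\geq 1$ and $t_0$ with $\Phi_{t_0}(\gamma)\subset\bar M_i$. In the latter case, (c) gives $\Phi_t(\gamma)\subset\bar M_i$ for all $t\geq t_0$; and if this trajectory revisited the compact set $\bar M_i\cap\overline U$ (where $U:=N_1(\bar M_0)$) at arbitrarily large times, Arzel\`a--Ascoli together with (a), (b) and continuity of $\Phi$ would produce a fixed-point limit, necessarily constant by the no-flower hypothesis, and (d) would then contract $\Phi_{t_k}(\gamma)$ to a point, a contradiction. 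Hence $\Phi_t(\gamma)$ eventually stays in $\bar M_i\setminus\overline U$, and in particular is disjoint from $U$. So in either case there is a finite $T_1(\gamma)$ past which $\Phi_T(\gamma)$ belongs to $A\cup B$ for all $T\geq T_1(\gamma)$, where $A$ is the open set of constant curves and $B$ the open set of curves disjoint from $\overline U$.

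Finally, by Arzel\`a--Ascoli the space $\Lambda(l,r)$ is compact, and a neighborhood-covering argument --- at each $\gamma_*$ one finds a neighborhood $V_{\gamma_*}$ and a time $T_{\gamma_*}$ such that $\Phi_T(\gamma')\in A\cup B$ for all $\gamma'\in V_{\gamma_*}$ and all $T\geq T_{\gamma_*}$, using (c), (d), and openness of $A\cup B$ --- produces a uniform $T=T(l,r)<\infty$ with $\Phi_T(\gamma)\in A\cup B$ for every $\gamma\in\Lambda(l,r)$. Setting $\alpha(\gamma,s):=\Phi_{Ts}(\gamma)$ then yields a continuous map with $\alpha(\gamma,0)=\gamma$ and $\alpha(\gamma,1)\in A\cup B$, as required. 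The main obstacle is Step 1: constructing $\Phi$ as a genuinely continuous semiflow that respects the locally convex ends (property (c)) and contracts small curves uniformly (property (d)). A secondary technical point in Step 3 is the forward-invariance needed to uniformize $T_1(\gamma)$ over $\Lambda(l,r)$, which is ensured by the stability of the constant-contraction mechanism of (d) and of the condition ``contained in $\bar M_i$ at distance $>1$ from $\bar M_0$'' under $C^0$-small perturbation of the initial curve.
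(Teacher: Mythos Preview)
Your proposal is correct and follows essentially the same approach as the paper: run a Birkhoff curve-shortening process with mesh smaller than $\delta$ (so that the ends $\bar M_i$ are invariant), use Arzel\`a--Ascoli together with the absence of closed geodesics to show that each curve either contracts to a point or eventually leaves the $1$-neighbourhood of $\bar M_0$, and then use compactness of $\Lambda(l,r)$ to obtain a uniform stopping time $T(l,r)$. The only step you leave implicit is the justification of your dichotomy ``either $\Phi_t(\gamma)$ becomes constant or it enters some $\bar M_i$'': this requires the same Arzel\`a--Ascoli argument you give afterwards (a curve intersecting $M_0$ for all $t$ stays in a compact set and would subconverge to a closed geodesic), which is exactly how the paper handles it.
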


This lemma will be proven in the next section, and its proof does not use any
novel ideas. We use the Birkhoff curve shortening process, and observe,
that if the application of this process to a curve $\gamma\in\Lambda(l,r)$ 
enters the closed $1$-neighborhood $(\bar M_0)_1$ for arbitrarily large 
values of time $t$ then all these closed curve cannot be too short (as 
otherwise the Birkhoff curve shortening flow will quickly shrink the short 
curve to a point). Then one can pass to a convergent subsequence that must 
converge to a non-constant periodic geodesic yielding a contradiction.
Therefore, we see that for each $\gamma$ there exists $t(\gamma)$
such that the Birkhoff curve shortening flow will also shrink $\gamma$
to a point in time $\leq t(\gamma)$ or for each time $T\geq t(\gamma)$
the application of the Birkhoff curve-shortening flow to $\gamma$
yields a closed curve outside of $(\bar M_0)_1$ at the moment $T$.
Now the compactness argument implies the existence of a uniform value
of $t=t(l,r)$ such that this $t$ has this property for all $\gamma\in \Lambda(l,r)$. Now one can define $\alpha$ as the result of the application of the Birkhoff
flow at the moment $t(l,r)$.

The induction step will be provided by the following lemma proven
in the last three sections of the paper. Its proof involves a combination of 
several old and new ideas:

\begin{lemma}Let $M^n$ be a complete non-compact Riemannian manifold with
locally convex ends and without non-constant geodesic flowers.
Denote the set of all $N-(n+1,l,r)$-small flowers by $Flwr_{n+1,N,l,r}$.
For each $l, r\geq 0, 2\leq N\leq {n(n+1)\over 2}$ there exist 
$L=L(n,l,r), R=R(n,l,r)$ and a continuous map
$\alpha^N:Flwr_{n+1,N,l,r}\times [0,1]\longrightarrow Flwr_{n+1, N, L, R}$
such that for each $\gamma\in Flwr_{n+1,N,l,r}$,
$\alpha^N(\gamma,0)=
\gamma$ and $\alpha^N(\gamma,1)\in Flwr_{n=1,N-1,L,R}$. (In other words,
$\alpha^N(\gamma,1)$ has at least one petal less than the original upper
bound $N$.)
\end{lemma}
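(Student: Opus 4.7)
The plan is to exhibit $\alpha^N$ as a time-rescaling of a length-non-increasing \emph{flower flow} $F_t$ on the space of $N$-flowers, built from simultaneous Birkhoff-type curve-shortening on every petal combined with a balancing motion of the common base point. Under the hypothesis that $M$ carries no non-trivial geodesic flower, $F_t$ has no stationary points other than constant flowers, so on the compact set $Flwr_{n+1,N,l,r}$ every trajectory must either drive one of the petals to zero length in bounded time or escape into an end. The instant at which the first petal's length hits $0$ produces an $(N-1)$-flower, which is the target output.

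First I would define $F_t$ concretely. Given a flower with base point $p$ and petals $\gamma_1,\dots,\gamma_N$, I would discretize each petal into subarcs shorter than the local convexity radius, replace successive pairs by minimizing geodesics, and then move $p$ in the direction $-\sum_i \dot\gamma_i(0^+)/|\dot\gamma_i(0^+)|$ (the negative of the obstruction to being a geodesic flower), reattaching petal endpoints via short minimizing geodesics. Routine arguments, in the spirit of [R1], show that $F_t$ is continuous in the initial flower, length-non-increasing, strictly length-decreasing away from stationary flowers and constant maps, preserves the $N$-flower structure, and leaves each $\delta$-locally convex set, in particular each $\bar M_j$, invariant under the flow.

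Next I would apply a compactness argument. Since $Flwr_{n+1,N,l,r}$ is compact and, by hypothesis, contains no non-trivial fixed points of $F_t$, there is a uniform time $T_1 = T_1(n,l,r)$ such that for every $\gamma$ in this space, at time $T_1$ either (a) some petal of $F_{T_1}(\gamma)$ has collapsed to a point, so that $F_{T_1}(\gamma) \in Flwr_{n+1,N-1,L,R}$ for suitable $L,R$; or (b) $F_{T_1}(\gamma)$ has left the closed $r$-neighborhood of $\bar M_0$ and hence sits entirely inside some end $\bar M_j$. In case (b) I would postcompose with a straight-line contraction of the flower within $\bar M_j$ toward any fixed interior point; local convexity keeps this contraction inside $\bar M_j$, and it annihilates every petal, \emph{a fortiori} producing an $(N-1)$-flower. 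The map $\alpha^N$ is obtained by concatenating these pieces and linearly rescaling time to $[0,1]$.

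The main obstacle is continuity of $\alpha^N$ at the interface between cases (a) and (b), and, more generally, at initial flowers whose trajectories graze either the zero-length locus of a petal or the threshold separating core from end. I would handle this by using a slightly enlarged window $T_1 + \varepsilon$ so that the two cases are cleanly separated along trajectories, and by blending the ``petal-collapse'' map with the ``end-contraction'' map through a continuous cutoff indexed by a proxy for the position of the flower relative to $\bar M_0$. Verifying that the maximal length $L=L(n,l,r)$ and radius $R=R(n,l,r)$ traversed along $F_t$ starting from $Flwr_{n+1,N,l,r}$ are finite, which is what lets the target space $Flwr_{n+1,N-1,L,R}$ be taken fixed, is then a routine consequence of the length-non-increasing property and a uniform bound on the speed of the base-point motion. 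This compactness-plus-continuity bookkeeping, rather than the existence of $F_t$ itself, is the technical heart of the argument.
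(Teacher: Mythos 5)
Your flow construction and the compactness argument giving a uniform time $T_1$ at which every trajectory starting in $Flwr_{n+1,N,l,r}$ has either collapsed near the core or left a neighbourhood of $\bar M_0$ are fine, and run parallel to what the paper itself does with its (weak) curve-shortening flow on flowers. The genuine gap is your case (b). The ends are only $\delta$-locally convex: that hypothesis controls minimizing geodesics between $\delta$-close boundary points and makes $\bar M_j$ invariant under the shortening process, but it does not provide any ``straight-line contraction'' of $\bar M_j$ to a fixed interior point. $\bar M_j$ is non-compact, is not geodesically star-shaped, and need not even be contractible --- e.g.\ $M_j$ may be $\Sigma_j\times(0,\infty)$ with $\pi_1(\Sigma_j)\neq 0$, in which case a petal that has escaped into the end can be non-contractible inside $\bar M_j$, so no homotopy whatsoever annihilates it there, let alone one with controlled length and controlled distance to $\bar M_0$. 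For the same reason the proposed cutoff blending of the ``petal-collapse'' map with the ``end-contraction'' map cannot produce a genuinely continuous $\alpha^N$ with $\alpha^N(\cdot,1)\in Flwr_{n+1,N-1,L,R}$: across the interface the time-one output would have to jump from a constant flower near the core to an object deep inside the end. This obstruction is exactly why the paper works with \emph{weak} fillings and a weak $L$-flow, in which flowers that escape are sent to the point at infinity $q_j$ and continuity is demanded only after composing with the quotient $M^c\longrightarrow T$ collapsing the ends; the strong continuity you are trying to arrange inside the ends is neither available nor needed.

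Separately, your mechanism for dropping a petal differs from the one the paper has in mind for this induction step, and by itself it only gives the weaker dichotomy ``the whole flower collapses or it escapes'': the unweighted flow provides no reason why a single petal should die while the flower survives near the core. The paper's route is the weighted length functional: enumerate the petals and weight them by powers $a^i$ of a large $a\geq 3$ (depending on $l,r$); by the merging lemma every critical point of the weighted functional is a geodesic flower whose top-weighted petal is ``wide'' (angle $\pi-O(1/a)$ at the base point), and a compactness estimate bounding the angles of geodesic loops of bounded length in a compact region away from $\pi$ rules such critical points out for $a$ large, so the weighted flow must collapse the top-weighted petal for flowers that stay near the core, with a uniform time again coming from compactness. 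So both the treatment of escaping flowers and the petal-reduction mechanism in your argument need to be replaced before the lemma is proved.
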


Assuming this and the previous lemma we can immediately prove the existence of a filling
of $(n+1,l,r)$-small flowers by introducing $L-0=l, R_0=r, L_{i+1}=L(n,L_i,R_i), R_{i+1}=R(n,L_i,R_i)$ and simply combining homotopies $\alpha^{N-i}$ defined
on $Flwr_{n+1,N-i,L_i,R_i}$ for $i=0,\ldots, N-2$ as well as $\alpha$
from the previous lemma defined on $\Lambda(L_{N-1},R_{N-1})$.

Thus, the main theorem follows from the two lemmae above.
\forgotten

\medskip\noindent\section{Weak curve-shortening flow on spaces of flowers}

\forget
\subsection{Birkhoff flow for closed curves on non-compact manifolds} Observe that
$2$-cages are just closed curves (only separated into three arcs). For $2$-cages
one can use an adaptation of the Birkhoff curve shortening process (BCSP).
While this is not the flow that we are going to use for general nets (or even flowers), we decided to describe an adaptation of the Birkhoff curve shortening process for non-compact manifolds as 1) it is simpler than the flow that we use in this paper and will describe below; 2) The same (minor) technical difficulties that arise when we extend the Birkhoff process (flow) to complete non-compact manifolds arise also for our flow; 3) We are not aware of any descriptions of the Birkhoff flow in non-compact case.

Recall, that for closed Riemannian manifolds BCSP works as follows. The BCSP will be applied to
all closed curves of length $\leq L$ for some value $L$ of interest to us.

Initially, one chooses a large finite set
of points on the curve, so that the distances between consecutive points $p_i,\ p_{i+1}$ are equal to some parameter $v$ that is less than half of the injectivity radius  of the
manifold. The distance between the last of this points and the endpoint of  the curve does not exceed $v$ as well. For compact manifolds one can just take
$v$ to be equal to $\min\{\delta,{inj(M)\over 4}\}$. (Here $\delta$ is an arbitrary positive constant. We choose to be equal to $\delta$ in
Definitions 3.1, 3.2.)
 Then one replaces the curve by the piecewise geodesic curve  formed by all geodesic segments connecting consecutive
pairs of points $p_i$. One can connect the initial curve with the piecewise geodesic curve by means of a length non-increasing
homotopy. At each moment of this homotopy for each time $t\in (0,1)$ we first follow the arc of the initial curve from
$p_i=p_i(0)$ to $p_i(1-t)$, where $p_i(1)=p_{i+1}$, and then connect $p_i(1-t)$ with $p_{i+1}$.
We call this stage {\it Birkhoff deformation.}

Once the curve becomes piecewise geodesic the process becomes iterative. At each iteration one finds midpoints $\tilde p_i$ of geodesic segments
$p_ip_{i+1}$ and replaces the curve by a new broken geodesic curve formed by geodesic segments $\tilde p_i\tilde p_{i+1}$.
Again, there exists a continuous length non-increasing homotopy between the closed curve before and after this iteration.
For each $i$ and $t$ we follow the arc $\tilde p_i\tilde p_{i+1}$ parametrized by $t\in [0,1]$ for time $1-t$, and then cut
to $\tilde p_{i+1}$ along the minimizing geodesic.
At the end of each stage we do again the Birkhoff deformation with the same
parameter $v$. The purpose of the Birkhoff deformation is to rebalance the
distances between points $p_i$.

Observe that if the initial curve is in a locally convex domain $U$, then it will remain in $U$ forever, providing that the parameter $v$ is sufficiently small. This fact follows simply from
the fact that all new arcs on new closed curves are always minimizing geodesics connecting points on previously constructed
curves. If our initial manifold $M^n$ is closed, then
 1) we can use a uniform bound $N$ for the number of points $p_i$ that enables one
to treat BCSP as a flow on a finite dimensional manifold $i(M^n)^N$; 2) It is well known that a sequence of iterations of BCSP
applied to a closed curve $\gamma$ either converges to a point or to a non-trivial periodic geodesic; 3) If there is no
non-trivial periodic geodesic of length $\leq l$ (for some $l$), then BCSP is continuous
with respect to the initial curve.

If $M$ is a complete non-compact manifold without non-trivial periodic geodesics, there are two possible outcomes
of BCSP when applied to a closed smooth curve $\gamma$: Either this curve shrinks to a point, or it goes to infinity. The last
option means that for each $r$ there exists $N(r)$ such that after $N(r)$ iterations of BCSP the curve exists the
closed metric ball $B_r(v_0)$ of radius $r$ centered at $v_0$ and never returns to this metric ball again. Indeed,
proceeding by contradiction,
observe that the alternative is that the existence of some $r$ and an infinite increasing sequence $N_i$ such that
the curve after $N_i$ iterations of BCSP is in $B_r(v_0)$,
yet then it
exits $B_{2r}(v_0)$ (before again returning to $B_r(v_0))$.
This implies that the lengths of the results of all $N_i$th iterations of BCSP applied to the initial curve are bounded from below
by a positive constant since a curve of length $\leq c(r)=\inf$ convexity radius$(x)$ over $x\in B_r(v_0)$ will rapidly contract
to a point inside the ball of radius $c(r)$ centered at any of its points that is inside $B_{r+c(r)}(v_0)\subset B_{2r}(v_0)$.
On the other hand the lengths of these $N_i$th iterates do not exceed the length of the original curve. Now 
one can use the Ascoli-Arzela theorem to obtain a convergent subsequence of BCSP iterates, that converges
to a non-constant closed curve in $B_r(v_0)$ which is a fixed point for BCSP and, therefore, a periodic geodesic. This provides
a contradiction with the assumption that $M$ does not have periodic geodesics.

In both cases, BCSP provides a $2$-disc contracting a $2$-cage regarded as a closed curve (possibly to a point at infinity), and this process
is continuous with respect to the closed curve. 

\medskip\noindent\subsection{Curve-shortening flow on spaces of nets.} 
\forgotten 

In this section we prove Theorem 3.7. We construct a weak curve shortening flow on the space of $L$-flowers.
All flowers outside of the
$(L+2\delta)$-neighbourhood of $\bar M$ will be immediately sent
to one of points $q_l$ at infinity in the same component of the complement of $\bar M_0$. Therefore, it is sufficient to consider $L$-flowers in the $(L+2\delta)$-neighbourhood of $\bar M_0$. Our description of this flow will be an adaptation of the process described in [NR] for cages on closed Riemannian manifolds, and does not contain any new ideas.

Let $i_1$ denotes the infimum of the injectivity radii of points of $M$ in this set, and $I$ denotes $\max\{\delta/2, i_1/4\}$. Let $N$ denote the integer part of ${L\over I}$. Given a flower
of length $\leq L$, we observe that the length of each petal does not exceed $L$.
Consider first the obvious length-nonincreasing deformation of the space of all $L$-flowers
in the $(L+\delta)$-neghbourhood of $\bar M$ into the space of broken
geodesic flowers with each petal subdivided into $N+1$ segments of equal length $\leq I$ by exactly $N$ intermediate points: One first subdivides the petal into $N+1$ arcs
of equal length using $N$ intermediate points. Then one connects pairs of consecutive points by (unique) minimizing geodesics. If the initial curve was outside of $M_0$, then the $\delta$-convexity of the complement of $M_0$ implies that the new curve will
be outside of $M_0$. Note that replacing each broken geodesic segment of the original
curve by the minimal geodesic segment in the new curve cannot increase the length.
We can connect the old curve and the new curve by a length non-increasing homotopy
by following the $N+1$ arcs of the original petal for shorter and shorter periods of time and then following the minimal geodesic to the end point of the arc.
This stage is similar to analogous stage in the Birkhoff curve shortening process.
Therefore, we will call it {\it Birkhoff deformation}.

Such flowers are completely determined by the coordinates of the base point of the flower and N $\times$ the number of petals of the flowers. 
From now on we can identify considered flowers with a subset of $(M^n)^K$,
where $K$ is $1+N\times $ the number of the petals, where "$1$" corresponds to the base point
of the flower. The flow will mostly consist of stages during which we flow $K$ points that determine the flowers along trajectory of a vector field on a domain in $(M^n)^K$.
Yet, we do not want distances between pairs of points that are supposed to be connected by the unique minimizing geodesic to grow too much.
It will be immediately clear from the construction of this field that the speed of movement of each point will not exceed $2(n+1)$. Therefore, we plan to stop after
each time interval of length ${I\over 10n}$. This guarantees that the distances between consecutive points will still be less that $\delta$, and less than $i_1\over 2$. Therefore, the corresponding points will uniquely determine a flower.
Right after stopping the flow we check if the curve is outside of $L$-neighbourhood of $\bar M_0$. If it is, it will be immediately mapped by the flow to the corresponding point at infinity $q_j$ and will stay there forever.
Then we perform the Birkhoff deformation, that will again drop the distances
between consecutive points on petals to less than $I$. At the end of the Birkhoff deformation we again check
if the resulting curve is outside the $L$-neighbourhood of $\bar M_0$, and map it to the corresponding point at infinity, if it is outside. 

Now we are going to describe the vector field on the considered domain of $(M^n)^K$.
We are going to describe it on several overlapping open subsets covering the considered domain, and then combine
these vector fields into one vector field by using a subordinate partition of unity.

We are first going to describe the flow on flowers where all petals are ``not too small", where ``not to small" means that the petal is not contained in ${a\over 2}$-neighbourhood f the base point of the flower for some small $a$ that will be
defined later.

The vector field depends on positions of all $K$ points that determine the flower.
For each of these points $p$ consider the adjacent geodesic segments on all petals that contain $p$.
The number of these segments will be equal to $2\times$ the number of petals
for the base point and $2$ for each of the remaining $K-1$ points.
For each of these geodesic segments consider the unit tangent vector at $p$ directed
from $p$ and sum up all these vectors. The result will be a vector $V=V(p)$ in the tangent space $T_pM^n$ of $M^n$ at $p$. These $K$ vectors will form the vector field at the point of $(M^n)^K$ that corresponds to the considered flower, and will be used to deform it.
The first variation formula for the length functional implies that the time derivative of
the length functional of the flower at $t=0$ will be equal to $-\Sigma_p\Vert V(p)\Vert^2$, where we sum over all $K$ points $p$
that determine the flower. Thus, the deformation will be length non-increasing,
and will be length decreasing unless all petals of the flower are geodesics, and the flower is a stationary geodesic flower. As we assumed that there are no stationary geodesic flowers,
the deformation will be uniformly length decreasing. Further, assume that the base point of the flower is on the boundary of $M_0$, yet all adjacent endpoints of geodesic segments are outside of $M_0$. Then all the tangent vectors at the base point point outside of $M_0$, and so is their sum. Thus, a flower outside of $M_0$
cannot, even partially, enter $M_0$ when deformed along this vector field.

Now consider all flowers where ALL petals are in a sufficiently small
neighbourhood of the base point $b$. In this case the base point does not move, and the remaining $K-1$ points move along the unique minimizing geodesics to the base point.
(In other world, the component of the vector field corresponding to the base point in the zero tangent vector, for all other points $p$ the components of the vector field
at $p$ are unit tangent vectors to the minimizing geodesics connecting $p$ and $b$ and
directed towards $b$. The size of the neighbourhood does not exceed $I$, and is chosen so that the flow is length-decreasing. It is easy to see that one can choose
the uniform size of such neighbourhoods over all points $b$ in the closed $(L+2\delta)$-neighbourhood of $\bar M_0$.

If some petals are very close to the base point, the remaining $l$ petals are not,
we move the flower as if it is a flower with just the $l$ "long" petals. We calculate
the vector field at the base point and all intermediate points on long petals as above (that is, as the sums of the unit tangent vectors to all incident geodesic segments). The tangent vectors to short
petals are not included into the sum at the base point. The components of the vector field at $N$ points $p$ on each short petal are then calculated as follows: First, consider the vector $V(b)$ calculated for the base point $b$ 
as the sum of unit tangent vectors to arcs of all long petals adjacent to $b$. (Each long
petal contributes two unit tangent vectors at both its endpoints that coincide with $b$). Consider the unique minimizing geodesic between $b$ and $p$ and parallel translate $V(b)$ to $p$ along this geodesic. Denote the result by $V_1$. Let $V_2$
denote the unit tangent vector at $p$ to the geodesic between $p$ and $b$. We direct this vector towards $b$. Define $V(p)$ as $V_1+V_2$. Looking at holonomies
for geodesic triangles $bp_ip_{i+1}$, where $p_i$, $p_{i+1}$ are adjacent
points on all short geodesic segments along "short" petals, we see that there exists $a>0$ such that
if all ``short" petals are in the $a$-neighbourhood of the base point (and the base point is in the $L$-neighbourhood of $\bar M_0$), then the first variation of the length
for the flow defined by this vector field will be negative.

Now we can consider a covering of the set of all consider $K$-tuples of points in $M^n$ by the open neighbourhoods of strata that correspond to degenerate flowers
where some or all petals are constant as well as an open set corresponding
to flowers where all petals are "long" (or, more precisely, not contained
in a small neighbourhood of the base point). Now we can use a subordinate partition
 of unity to combine the constructed vector fields on open sets into one vector field.
 The flow along this vector field will be length-decreasing.

\forget
Here we review the construction from [NR],
where a length-shortening flow was constructed on the space of nets on a closed Riemannian manifold, and extend it to complete non-compact manifolds.  Observe
that this flow has similar properties to BCSP; the generalization of the construction of this flow for complete non-compact manifolds is straightforward; and, its restriction to cages (or flowers) is a curve-shortening flow for cages (or flowers).
As we already proved in previous sections, the existence of such a flow implies our main theorem.

Recall
that a {\it net} is a piecewise-smooth map of a finite not necessarily connected
multigraph $G$ to a Riemannian manifold. This means that one first chooses some orientation
of all edges. Then one assigns to each directed edge $e=(v_i, v_j)$ a piecewise smooth map
$f_e:[0,1]\longrightarrow M$ subject to the following condition: If two edges $e_1$, $e_2$ are
both incident to a vertex $v\in G$, then the images under $f_{e_1},\ f_{e_2}$ 
of the endpoints of the intervals $[0,1]$ corresponding to $v$ must coincide. We regard $f_e$ as
the image of the edge of $G$ (parametrized by $[0,1]$) to $M$, and regard a net as a map 
$G\longrightarrow M$. 
Thus, $k$-cages are nets modelled on the complete graph with $(k+1)$-vertices.
A multigraph is allowed to have loops as well as multiple edges between two vertices. These multiple edges
can, in principle, be mapped in the same way creating a positive integer multiplicity (=weight) for the
curve in the manifold corresponding to this edge.
We call the  maps $f_e$ and sometimes their images $f_e([0,1])$ {\it edges of the net}, but
if it is important to distinuish between $f_e$ and its image, we call $f_e([0,1])$ {\it visible edges of the net}.
We call the images of the vertices of graph $G$ in $M$ {\it vertices of the net}.
Further, the map $f_e$ is allowed to be constant on an edge $e$, so an edge of the net can look like a point.
Thus, the image of $G$ in the manifold can look like a quotient of the graph, where some of the ``visible" edges
may have integer weights $>1$.

A length of the net is, by definition, the sum of lengths of all its edges, or equivalently, the sum of lengths of all visible edges multiplied by their weights.
A net $Net$ is called a {\it (stationary) geodesic net} if it is a critical point of the length functional on the space of nets modelled on some multigraph $G$.
 More precisely,
this means that for each $1$-parametric flow $\Phi$ of diffeomorphisms of $M$ $0$ is the critical point of the
function $l(t)$ defined on the interval $(-\epsilon, \epsilon)$ for some $\epsilon>0$ by the formula $l(t)=$ length$(\Phi_t(Net))$. Using the
first variation formula for the length functional it is easy to see that a net is geodesic if and only if the following two conditions hold:
\par\noindent
1) Each of its edges is a geodesic.
\par\noindent
2) For each vertex $\nu_i\in G$ consider all unit vectors in the
tangent space $T_{Net(\nu_i)}M$ that are
tangent to the visible edges of the graph that are incident to $\nu_i$.
In each case we choose the direction
of the tangent vector from $Net(\nu_i)$ towards another endpoint regardless of the
orientation of the corresponding edge.
If an edge is constant, then we take the corresponding tangent vector to be the zero vector. 
The condition at $\nu_i$ is that the sum of all these tangent vectors multiplied by weights
of the corresponding visible edges is the zero vector in
$T_{Net(\nu_i)}M$.  Finally, the condition at $\nu_i$ must hold for all vertices $\nu_i$ of the graph $G$.

\par
Again, note that a stationary geodesic $k$-cage might look like a different graph in $M$.
For example, if a spanning tree of $G=K_{k+1}$ is mapped
into a point, and as the result all vertices merged (=were mapped) into one point, then
it will look as a collection of $K={k(k-1)\over 2}$ geodesic loops.
Further, some of these loops can be constant (and, thus, disappear), whereas some others can be mapped in the same way.
Thus, a (stationary) geodesic net {\it might} look like
a collection of weighted geodesic loops in $M$ with the same base point such that the sum of weights
is at most $K$. In addition, in this case the geodesic net satisfies the second condition, that asserts
that the (weighted) sum of all
unit tangent vectors at its only vertex (=the base point) directed from the vertex
is zero. A geodesic net that consists of
geodesic loops based at the same point is called a
{\it geodesic flower}, or {\it geodesic $m$-flower}, where $m$ is the number of non-trivial geodesic loops (``petals")
counted with their weights. Observe, that the stationarity condition at the base
point implies
that if there is only one (weighted) petal, then the geodesic net
must be an (iterated) periodic geodesic.

In [NR] one considers all nets in $M$ modelled on all graphs with the same number of edges $N$ as elements
of the same space. One can consider the distance between two nets as the minimum over all possible orderings
of edges of these nets of the maximal distance between edges of two nets with the same number.
The distance between two edges is the sum of the $C^0$-distance between them and $L^{\infty}$-distance
between their derivatives.
A length-shortening flow on the space of nets on a closed Riemannian manifold with a fixed
number (of possibly constant) edges can be defined as follows
(see the details in [NR]):

On the first stage one introduces finite collections of points on each edge dividing it into many
arcs of length less than, say, one quarter of the injectivity radius of the manifold. This is done using
the Birkhoff deformation applied separately to each edge:
One chooses a large finite set
of points on the curve, so that the distances between consecutive points $p_i,\ p_{i+1}$ are equal to some parameter $v$ that is less than half of the injectivity radius  of the
manifold. The distance between the second last of these points and the last one that coincides with the end of the edge does not exceed $v$ as well. The first of these points is the starting point of the edge. For closed manifolds (or compact domains on complete non-compact manifolds) one can just take
$v$ to be equal to $\min\{\delta,{inj(M)\over 4}\}$.
(Here $\delta$ is an arbitrary positive constant. We choose to be equal to $\delta$ in
Definitions 3.1, 3.2.)
 Then one replaces the edge by the piecewise geodesic curve with the same endpoints formed by all geodesic segments connecting consecutive
pairs of points $p_i$. One can connect the initial edge with the piecewise geodesic curve by means of a length non-increasing
homotopy. At each moment of this homotopy for each time $t\in (0,1)$ we first follow the arc of the initial curve from
$p_i=p_i(0)$ to $p_i(1-t)$, where $p_i(1)=p_{i+1}$, and then connect $p_i(1-t)$ with $p_{i+1}$.
We call this homotopy a {\it Birkhoff deformation.}
A Birkhoff deformation 
constitutes a length non-increasing deformation of the space of all nets to the space of
piecewise geodesic nets. (Piecewise geodesic nets are, by definition, nets such that each edge is
a piecewise geodesic; no stationarity condition at vertices is assumed.)

If the lengths
of all edges are a priori bounded, and the manifold is closed,
then the total number of endpoints of geodesic intervals will be uniformly bounded. (The
same will be true if the manifold is not closed, but we consider only geodesic nets in a closed metric ball). Let us call all endpoints of the geodesic segments vertices. Some of these vertices
are original vertices of the geodesic net (these vertices are called ``multiple vertices" in [NR]), some are ``new" vertices added to subdivide
the edge into short arcs (and called ``double vertices" in [NR]). These vertices completely determine the geodesic nets. Therefore,
the considered space of geodesic nets with a bounded number of vertices is finite dimensional.
Yet this space is not a subset of full measure of
an appropriate Cartesian power of $M$. The reason is that
only certain pairs of vertices are supposed to be connected by geodesic arcs. Different choices
of connections correspond to different manifold strata, which intersect at lower-dimensional
strata corresponding to degenerate nets, where some vertices
merge and/or some
edges become constant
(=disappear). For example, consider piecewise geodesic nets with six vertices. These
six vertices can be possibly be connected into two disjoint geodesic triangles. When a vertex of
one of these triangles becomes close to a vertex of the other triangle and they eventually merge
one obtains a bow-tie shaped net with five vertices. However, the same bow-type shaped net
can be obtained from a geodesic hexagon connecting six vertices, where a pair of opposite vertices
becomes close and merge. Pairs of geodesic triangles and geodesic hexagons correspond to two different
strata of the space of nets with at most six vertices that can each be identified with a subset of
$M^6$. Their intersection in a neighborhood of the bow-type shaped net looks like a subset
of a degenerate stratum that can be identified with a subset of $M^5$.

From this moment the curve-shortening process will take place on the space of piecewise geodesic
nets (and will be quite different from the Birkhoff curve-shortening process).
Each net can be identified with a finite collection of points (vertices)
on $M$ some of which are supposed to be connected with geodesic segments
(=minimizing geodesics). The distance between each pair of such points
does not exceed half of the injectivity radius at either of them. Some of these
vertices (``old vertices") correspond to vertices of (multi)graph $G$ used to define the net, and some (``new vertices") were added to make distances between
consecutive pairs of vertices along each edge small.
We move a net by moving each of its vertices (that then can be connected by geodesic segments). Each vertex is moved along a trajectory of a vector field,
so we need to define these vector fields at all vertices. These fields
depend on the (whole) geodesic net but not on time.

A very natural idea is to define the velocity vector at each vertex $v_i$ as the
sum of all unit vectors tangent to edges incident to this vertex. (As usual,
we direct the tangent vectors away from the considered vertex.) Denote these sums $\Sigma_i$. The first variation
formula for the length functional implies that the derivative of length
in this direction is equal to the sum over all vertices of
$\Vert \Sigma_i\Vert^2$.

However, this choice of tangent vector fields leads to the following problem:
Consider the above example of two geodesic triangles converging to a geodesic
bow tie, and a geodesic hexagon converging to the same bow tie.
Consider the tangent vector fields for vertices of the bow tie, and, in particular, the tangent vector at the point where two geodesic triangles meet.
This tangent vector will not be close to tangent vectors at two vertices of
two triangles about to meet at the vertex of the bow tie. All these
tangent vectors are very different from tangent vectors at two vertices of the
geodesic hexagon about to meet at the vertex of the bow tie. Thus, the vector
field that we want to choose is discontinuous at the bow tie, or, more generally, at nets corresponding to lower-dimensional ``degenerate" strata.

On the other hand, the first variation formula for the length functional
implies that if one chooses the optimal vectors for vertices of the bow
tie as above and then uses parallel translation along minimizing geodesics
to move them to corresponding vertices of a nearby pair of disjoint
geodesic triangles or a nearby geodesic hexagon, then the derivative
of the length functional in the chosen direction will have almost the
same absolute value as the derivative of the length functional
for the bow tie. 

This suggests an idea of defining the desired flow inductively starting
from open neighborhoods of the most degenerate nets (namely, points or
finite discrete collection of points) and going up to higher and higher 
dimensional strata. This idea is implemented using partitions of unity
subordinate to a covering by open balls, where each ball is always centered at a point of the lowest dimensional strata that it intersects. The
tangent vectors for the net corresponding to the center of the ball
are sums of unit tangent vectors to the incident edges. The tangent
vectors at vertices of nets not at the center of the ball are obtained
by parallel translation of the tangent vector at the nearest vertex of the net
at the center of the ball. 
For example, assume that one of the petals of a flower became very short during the described process. As the result, the flower became close to a flower with a smaller number of petals that belongs to a stratum with a higher priority. Once the petal is sufficiently short
it will be ``dragged" after the base point (via parallel translations
of the velocity vector at the base point to vertices on the petal).
Moreover, the tangent vectors to the petal at the base point will
not be taken into consideration when we calculate the velocity vector
at the base point as if the petal disappeared.

In addition, one needs to move new vertices along ``old" edges to ensure that
distances between them remain small and almost equal.
For this purpose one frequently stops the flow applied to a geodesic net
and applies the Birkhoff deformation described in the previous section to every
edge. The goal is to ensure that the distance between double
vertices will not have time to increase by more than $inj/4$ (assuming
that $M$ is closed).
Note that if $M$ is closed, the intervals between two consecutive values
of stopping time can be uniformly bounded from below. If $M$ is not compact,
we define the same continuous function $i(r)$ as in the previous section and 
require that distance between consecutive points do not increase more than
by $i(r)$ during the interval between two consecutive values of the stopping time provided that the image of the net is all the time within the ball of radius
$r$ centered at $v_0$. In this way, the differences between consecutive
values of stopping time and the parameter $v$ of the Birkhoff deformation
will be uniformly bounded below by positive constants depending on $r$ while
the net is in $B_r(v_0)$. This implies that the number of double
vertices of nets will be uniformly bounded above by a function of $r$. Also, $v$ does not exceed the parameter $\delta$ characterising the local convexity of $\cup_i\Sigma_i$.

Assume that a net is in a closed convex or $\delta$-convex
locally convex subset $C$ of $M$. In order to prove that
it will forever remain in $C$, it is sufficient to check only that for
each vertex of the net that is on the boundary of $C$ the tangent
vector at this vertex points towards $C$. This will  follow merely
from the fact that this tangent vector is a linear combination with positive
coefficients of tangent vectors to edges incident to the vertex all of which 
point to $C$. 


Observe, that when we apply the constructed flow to a cage, it remains a cage at all times. The same will be true for a flower.
It is also easy to check that the flow on cages (correspondingly, flowers) has all the required properties in Definition 3.3

Thus, we obtain the following theorem:

\begin{theorem} 
For each $N$ there exists a curve-shortening flow on $N$-cages. The restriction of this flow to flowers is a curve-shortening flow on flowers.
\end{theorem}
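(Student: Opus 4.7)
The plan is to adapt the length-shortening flow construction of Nabutovsky--Rotman (reviewed from \cite{NR}) to our setting of complete non-compact manifolds with locally convex ends, restricting attention to $N$-cages whose length is bounded (outside a bounded neighborhood of $\bar M_0$ the flow will simply send the cage to the appropriate point $q_j$ at infinity, as already explained for the flower version in the preceding section). First I would apply the Birkhoff deformation separately to each edge: introduce subdivision points at spacing at most $I=\min\{\delta/2,\mathrm{inj}/4\}$ computed on the relevant compact neighborhood of $\bar M_0$, and replace each sub-arc by the unique minimizing geodesic joining consecutive points. This is a continuous, length-nonincreasing deformation onto the space of piecewise geodesic $N$-cages with a uniformly bounded number of vertices.

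Next, the space of such piecewise geodesic nets sits (stratum by stratum) inside a Cartesian power $M^K$, the stratification reflecting which vertices have merged and which edges have collapsed. On the top stratum I would define a vector field whose value at each vertex $v_i$ is $\Sigma_i$, the sum of the unit tangent vectors at $v_i$ to all incident geodesic edges, oriented away from $v_i$. The first variation formula gives length derivative $-\sum_i\|\Sigma_i\|^2$, so the flow along this field is strictly length-decreasing away from stationary geodesic nets. To make the field continuous across the lower-dimensional strata where different combinatorial types meet, I would use a partition of unity subordinate to a covering by small balls, each centered at a net lying in the lowest-dimensional stratum that the ball intersects; at the center of such a ball the $\Sigma_i$ are computed directly, and at nearby nets the velocity vectors are obtained by parallel transport along the short minimizing geodesics joining corresponding vertices. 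By choosing the balls small enough and using the first variation formula, the inner product of the transported vectors with the actual $\Sigma_i$ of nearby nets stays positive, so the partition-of-unity field remains length-decreasing.

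The flow would then proceed in alternating stages: run the vector-field flow for a short time $I/(10n)$ (which keeps consecutive vertex spacings below $\delta$ and below the injectivity radius), then pause and reapply the Birkhoff deformation to redistribute vertices along each edge, then check whether the cage has left the closed $L$-neighborhood of $\bar M_0$ and, if so, assign it to the corresponding $q_j$. This alternation guarantees all axioms of Definition~3.3: length nonincrease and uniform length decrease on non-stationary cages (from the first variation estimate), preservation of $\delta$-locally convex subsets (since each tangent vector at a boundary vertex is a nonnegative combination of inward-pointing vectors), contraction of sufficiently small cages to a point (from the Birkhoff step forcing vertices into a single convex ball), and the cocycle property $F_i(F_i(k,t),s)=F_i(k,t+s)$ on the underlying deterministic flow.

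The main obstacle is continuity and length-nonincrease at the stratum boundaries: a flower obtained as a degenerate limit of two geodesic triangles sharing a vertex has a velocity at the shared vertex that is genuinely different from the velocities at the two nearby unmerged vertices, so the vector field cannot simply be defined strata-wise. This is exactly what forces the partition-of-unity construction with parallel transport from ``most-degenerate'' centers, and verifying that the resulting field still drops length requires choosing the ball radii small relative to curvature so that holonomy corrections around small geodesic triangles are controlled. Finally, for the statement about flowers, I would observe that the underlying graph of a flower (a wedge of loops at one base vertex) is preserved: the only vertex that is incident to more than two edges is the base point, and the chosen vector field cannot generate a combinatorial change that splits this vertex, so the restriction of the flow to $Flwr_j\subset Cage_{i(i-1)/2}^L\subset Cage_i^L$ lands in $Flwr_j$ and satisfies property (3) of Definition~3.3.
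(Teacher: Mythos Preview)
Your proposal is correct and follows essentially the same approach as the paper: the Nabutovsky--Rotman length-shortening flow on piecewise geodesic nets, built from the Birkhoff deformation, the sum-of-unit-tangent-vectors vector field at each vertex, a partition of unity subordinate to balls centered on the most degenerate strata with parallel transport to nearby nets, periodic rebalancing stops at time intervals of order $I/(10n)$, and the observation that the flow cannot split the base vertex of a flower. The paper's proof is exactly this construction followed by the remark that the resulting flow on cages (and its restriction to flowers) satisfies the required axioms.
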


Thus, we arrive to the following proposition that follows from Proposition 3.4:

\begin{Lem}
Assume that there are no non-constant geodesic nets on $M$. The $M$ admits
a filling of cages.
\end{Lem}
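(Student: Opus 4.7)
The plan is to reduce to constructing the flow on flowers, then build it explicitly as a length-decreasing flow on a finite-dimensional parameter space of piecewise-geodesic flowers. By the Corollary that produces a weak filling of $(n+1)$-cages from a weak filling of flowers (with a controlled length blow-up), and by the preceding Lemma that converts a weak $L$-curve-shortening flow on flowers into such a filling, it suffices to construct a weak $L$-curve-shortening flow on the space of flowers of length $\leq L$.

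First, I would restrict attention to flowers meeting the closed $(L+2\delta)$-neighborhood $\Omega$ of $\bar M_0$; condition (0.2) mandates that any flower entirely contained in some $M_j$ outside the $(L+\epsilon_1)$-neighborhood of $\bar M_0$ be mapped instantly to $q_j$ and frozen. Set $i_1 = \inf_{x\in\Omega}\mathrm{inj}(x) > 0$, $I = \min\{\delta/2,\, i_1/4\}$, and $n_0 = \lceil L/I\rceil$. Run a preliminary Birkhoff-style length non-increasing homotopy that subdivides each petal into $n_0+1$ equal-length arcs and replaces each arc by its unique minimizing geodesic; after this preparation a flower is parametrized by a tuple of at most $K = K(L,n)$ points in $M$.

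On this parameter space, define a velocity field by the first variation formula: at every vertex $p$ of the prepared flower, sum the unit tangent vectors at $p$ to each incident geodesic segment, pointed away from $p$, to obtain $V(p)\in T_pM$. Then $\tfrac{d}{dt}\,\mathrm{length} = -\sum_p \|V(p)\|^2 \leq 0$, with equality only when every $V(p)=0$, i.e.\ on a stationary geodesic flower, which by hypothesis must be constant. Flow along this field for a short time $\tau = I/(10n)$, then stop, re-run the Birkhoff subdivision to rebalance vertex spacings, and iterate; after each stopping time, check whether the current flower has left the $L$-neighborhood of $\bar M_0$ and, if so, send it to the appropriate $q_j$ and freeze. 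The locally convex ends condition (together with $I \leq \delta/2$) guarantees that when a flower lies in $\bar M_j$ every $V(p)$ at a boundary vertex points into $\bar M_j$, yielding property (6).

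The hard part will be handling the degeneration strata where petals collapse to the base point: the sum $V(p_0)$ is discontinuous as a petal shrinks, which is precisely the difficulty the \textnormal{[NR]} construction was designed to resolve. My plan is to cover the parameter space by finitely many open sets indexed by the pattern of \emph{short} petals (those contained in an $a$-ball around the base point for a small fixed $a > 0$); on each such open set, use a reduced vector field that omits the short-petal tangent vectors from $V(p_0)$ and drags the points of each short petal by parallel-translating the reduced base-point velocity along the minimizing geodesic to each such point, augmented by a small inward pull toward $p_0$. A partition of unity subordinate to this cover combines these local fields into a single continuous vector field whose first variation of length remains a non-negative combination of the individual strictly negative first variations, and hence stays negative on non-stationary flowers. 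Property (2) of Definition 3.6 then follows from compactness of the slab of length-$\leq L$ prepared flowers in $\Omega$ via a standard contradiction argument using the no-geodesic-flower hypothesis; properties (0.1)--(5) are routine bookkeeping consequences of the construction.
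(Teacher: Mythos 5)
Your proposal follows essentially the same route as the paper: reduce cages to flowers via the deformation retraction, build the \textnormal{[NR]}-style length-decreasing flow on the finite-dimensional space of Birkhoff-subdivided flowers in the $(L+2\delta)$-neighbourhood of $\bar M_0$ (with the partition-of-unity treatment of short-petal strata and the locally convex ends giving property (6)), and then invoke the lemma converting a weak curve-shortening flow plus the absence of non-constant stationary nets into a weak filling, with discontinuities only at the points at infinity removed by the quotient map. This matches the paper's argument, so no further comparison is needed.
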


In view of the previous discussion we see that:

\begin{Cor}
A complete non-compact manifold with locally convex ends has a non-constant 
geodesic net.
\end{Cor}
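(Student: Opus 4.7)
The plan is to prove the corollary by contradiction, combining the preceding Lemma (no non-constant geodesic nets implies existence of a filling of cages) with Lemma \ref{no filling of cages} (a manifold with locally convex ends admits no weak filling of cages of sufficiently large bounded length). So I would suppose, toward a contradiction, that $M$ is a complete non-compact Riemannian manifold with $\delta$-locally convex ends on which every geodesic net is constant.

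Next, I would invoke the previous Lem (stated just above the corollary): because $M$ has no non-constant geodesic nets, and in particular no non-constant geodesic $i$-cages for any $i\leq n+1$, $M$ admits a filling of cages. I would make sure to check that this gives, for each choice of $L\in(0,\infty)$, at least a weak filling of $(n+1)$-cages of length $\leq L$ in the sense of Definition 3.3; the point is that the construction of the length-shortening flow together with the passage from flow to filling (the \emph{From cages to flowers} corollary and the flow-to-filling lemma) produces $H_i:Cage_i^L\times[0,1]\to Cage_i$ with the required properties, where the only discontinuities occur after composing with the quotient $q:M^c\to T$, as required.

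Then I would apply Lemma \ref{no filling of cages}, which provides constants $\delta_0>0$ and $L_0>0$ such that for every $\delta\in(0,\delta_0]$ and every $L\in[L_0,\infty)$ there is \emph{no} weak filling of cages of length $\leq L$ on $M$. Choosing $\delta$ small enough that $M$ has $\delta$-locally convex ends with $\delta\leq\delta_0$, and choosing any $L\geq L_0$, the existence of the weak filling obtained in the previous paragraph directly contradicts Lemma \ref{no filling of cages}. This contradiction refutes the assumption, so $M$ must carry a non-constant geodesic net (in fact, the argument shows the stronger conclusion of a non-trivial geodesic flower, via the cages-to-flowers reduction).

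The only non-routine issue is making sure the two halves of the dichotomy refer to compatible versions of ``filling.'' Specifically, the flow we constructed acts on cages of bounded length $\leq L$ and, after the cages-to-flowers corollary, yields only a weak filling of $(n+1)$-cages of length $\leq L$ (with intermediate cages possibly of length up to $\sim nL$); but Lemma \ref{no filling of cages} is stated exactly for weak fillings of cages of length $\leq L$ with $L$ sufficiently large, so the two match. Hence the main (and essentially only) obstacle—keeping careful track of the length bounds and the weak-versus-strong distinction through the cage/flower reduction—was already handled in Sections 2 and 3, and the corollary follows at once.
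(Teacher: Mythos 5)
Your proposal is correct and follows essentially the same route as the paper: assume there are no non-constant geodesic nets, obtain a (weak) filling of cages of bounded length from the curve-shortening flow, and contradict Lemma \ref{no filling of cages}. The only cosmetic difference is that you delegate the ``impossible extension'' step and the issue that the resulting map restricted to $\bar M_0$ is only homotopic to the inclusion to the proof of Lemma \ref{no filling of cages}, whereas the paper's proof of the corollary spells out that extension and the small-simplex/convexity-radius homotopy argument explicitly; the underlying argument is the same.
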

\begin{proof}

The main idea is obvious: Assuming that there are no geodesic nets we are attempting the impossible extension as described above.
We already explained how to extend to the $1$-skeleton. Now we use existence of the filling of cages to obtain the impossible extension that provides
the contradiction with Corollary 2.2.

However, this argument has a minor flow. Namely, when we need to extend $\Psi$ (defined in section 2) to an
$(n+1)$-dimensional simplex $\sigma^{n+1}$ , we need not only to extend the already constructed restriction of $\Psi$ to the $1$-skeleton of $\sigma^{n+1}$ but also to ensure that the image of the restriction of $\Psi$ to the face $\sigma^m$ of $\sigma^{n+1}$ in $M^n$ coincides with this face. Fortunately, we are helped by the fact that these faces in $M^n$ are all very small, and are in the ball of radius smaller than the convexity radius of the manifold. As the result, the $1$-skeleton of each small $m$-dimensional simplex in $M^n$ will be contracted to a point within the ball $B$ of radius $conv(x)$ centered at any vertex $x$ of this simplex.
The same will be true for all its subcages and subcages of the nets obtained during the curve-shortening flow. Thus, the $1$-skeleton of $\sigma^{n+1}$ will be filled inside $B$. While it will not coincide with $\Psi$ , it will be homotopic in $B$ to $\Psi$, as we can connect the images of respective points simply along the minimizing geodesics in $M^n$. (Here we using the fact that each pair of points within a metric ball of radius $conv(x)$ for any $x\in M^n$ can be connected by the unique minimizing geodesic, and this geodesic continuously depend on its endpoints.) The restrictions of these homotopies on small $n$-dimensional simplices of the triangulation of $M^n$ (regarded as $n$-faces of corresponding $(n+1)$-dimensional simplices) can be glued into one homotopy defined on all $M^n$. This follows from the fact that fillings of $(n-1)$-dimensional faces of $n$-simplices of $M^n$ do not depend on the ambient $n$-dimensional simplices but only
on the $(n-1)$-dimensional face itself (see Proposition 3.4 (2)).
\end{proof}

\par\noindent
{\it Proof of Theorem 1.2 from Lemma 3.5} It is almost straightforward to prove 
Theorem 1.2. Indeed, Lemma 3.1 reduces filling of cages to filling of flowers, and
a brief examination of our curve-shortening flow shows that when we restrict it to flowers, its trajectory goes via flowers. Also, the trajectory of a flowers in a convex ball in $M^n$ forever remains in this ball. This helps to resolve the complication that the restriction of $\Psi$ that we construct to $M^n$ is not the identity map. (Indeed, as before we see that this map is homotopic to $\Psi$, and the contradiction with Corollary 2.2 still arises.)

\par\noindent
{\it Proof of Lemma 3.5:} The constructed curve-shortening flow on cages restricts to a curve-shortening flow on flowers. In the absence of a stationary flower this flow can be continued indefinitely.

While periodic geodesics, iterated periodic geodesics or ``unions"
of iterated periodic geodesics are all geodesic nets, there are many geodesic
nets that have nothing to do with periodic geodesics. Therefore, one would
need new ideas to prove the existence of a periodic geodesic.

\par\noindent
{\bf 2.5. Weighted length functional.} In this section we
are going to review ideas of the third author (R.R.) on applications of weighted length functionals that can be found in [R2], [R3], [R4]. The starting point
is that instead of the length functional one considers weighted length functional, where the lengths of edges are multiplied by powers of some number $a\geq3$
that later is chosen to be very large. This implies that in the stationarity condition each unit vector tangent to an edge is being multiplied by
the corresponding power of $a$. Similarly, factors equal to powers of $a$
enter the formula as coefficients at tangent vectors at all vertices.  When
an ``old" edge is broken into many short ``new" edges the weights for all
these new edges are equal to the weight for the old edge.

\par\noindent
{\bf 2.5.1. Idea 5:} Let $k$ be a cage with vertices
$v_,\ldots , v_m$. We enumerate edges of this cage as follows: The edge
connecting $v_2$ to $v_1$ has number $1$, edges connecting $v_3$ to $v_1$
and $v_2$ have numbers $2$ and $3$, etc. The last $m-1$ numbers are given to edges connecting $v_m$ with $v_1,\ldots , v_{m-1}$. Each edge is given the weight
equal to $a^i$, where, $a\geq 3$ is a chosen constant, and $i$ is the number
of the edge. The weighted length functional
is defined as $\Sigma_{i=1}^{m(m-1)/2} a^ilength (e_i)$. 
Note that is $a$ is integer, and $k$ is the critical point for this functional,
then the new net where the weight of each $e_i$ is multiplied by $a^i$ 
will be a critical point for the weighted length functional, i.e. a geodesic net.

The merging lemma from [R2] asserts that if $a\geq 3$, then each critical
point of the weighted length functional is a geodesic flower (see above for the
definition of the geodesic flowers). In particular, it consists of $N$
geodesic loops based at the same point, where $N\leq m(m-1)/2$.

Indeed, otherwise there exists an edge connecting two distinct vertices.
Among all such edges choose one with the largest number $i$. Consider
its endpoint that has a smaller number.  Consider the balancing condition
at this edge. The unit vector tangent to $e_i$ enters with weight $a^i$.
Each of the other edges incident to the same vertex contributes either a vector of length $a^j$ for $j<i$ (taken into account the weight), if it connects
different vertices, or 
two unit vectors with weight $a^j$,  $j<i$, if this edge looks as a loop.
The sum of all these vectors with the exception of the unit vector
tangent to $e_i$ has length that does not exceed
$2(1+a+a^2+\ldots +a^{i-1})<a^i$, and so they cannot balance $e_i$.

This would provide an alternative approach to deforming cages to flowers 
on a closed manifold without non-constant periodic geodesics (that was done in a more straightforward way above):

%

\begin{Lem}
Assume that $M$ is a complete non-compact Riemannian manifold without non-trivial periodic geodesics. Then for each $k$ the space of $k$-cages can be deformed
to the space of $k$-flowers. 
\end{Lem}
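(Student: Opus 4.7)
The plan is to use the weighted length functional $L_w(k) = \sum_{i=1}^{m(m-1)/2} a^i \cdot \mathrm{length}(e_i)$ of Idea 5 (with $a \geq 3$ and the edges ordered as there) to define a gradient-type flow on the space of $k$-cages, and then obtain the deformation by flowing to the subspace of combinatorial flowers and finishing with the unconditional cages-to-flowers deformation established earlier. First I would construct the flow in direct analogy with the construction of the curve-shortening flow in Section 4: at each vertex $v$ of a cage the velocity is the weighted sum $\sum_{e \ni v} a^{i(e)} \tau_e(v)$, where $i(e)$ is the ordinal of $e$ and $\tau_e(v)$ is the unit tangent to $e$ at $v$ directed away from $v$. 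This is the negative $L_w$-gradient at $v$, so by the first variation formula the flow is $L_w$-monotone and strictly $L_w$-decreasing off its critical set. Degenerate strata (coalescing vertices, collapsing edges) are handled by a partition of unity exactly as in Section 4, interleaved with Birkhoff rebalancing of double vertices on each edge.

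Second, I would invoke the Merging Lemma of Idea 5: because $a \geq 3$, the balance condition at the top-numbered vertex of any critical cage forces the highest-weighted incident edge to be constant, and an induction forces every edge incident to the top vertex to be constant. Hence any non-trivial critical cage of $L_w$ is already a combinatorial flower. In particular, on cages that are \emph{not} combinatorial flowers, $L_w$ is strictly decreased by the flow.

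Third, the deformation is obtained by running the flow until either the combinatorial type has merged into a flower or the cage has escaped into one of the ends. The strict $L_w$-decrease on non-flowers, together with the dominant weight $a^{m(m-1)/2}$ on the top edge, guarantees that this collapse happens in bounded time on each sublevel set $\{L_w \leq L\}$, by the same Ascoli--Arzel\`a compactness argument used for the main flow: any sequence of times with non-collapsed cages would produce a subsequential limit cage that is neither a flower nor a critical point of $L_w$, contradicting the strict decrease. Trajectories that escape to infinity are dealt with by the locally convex ends condition, which sends them to the corresponding point $q_j$, a degenerate flower. Once the combinatorial flower configuration is reached, the unconditional deformation retraction of cages to flowers from Section 3.2 completes the homotopy into the space of $k$-flowers.

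The main obstacle, in my view, is reconciling the hypothesis ``no non-trivial periodic geodesic'' with the fact that a multi-petal critical flower of $L_w$ need not be a single periodic geodesic; the Merging Lemma produces geodesic flowers, but the absence of periodic geodesics does not a priori rule out multi-petal ones. The workaround I would pursue is to stop the flow the moment the top-weight edges have collapsed --- at that point the cage is already combinatorially a flower, which is all the statement requires --- and then defer to the unconditional flower-collapse lemma. In this way the flow only needs to survive long enough to force the combinatorial merger, and for that the strict $L_w$-decrease on non-flowered cages suffices, independently of the finer question of which geodesic flowers $M$ does or does not carry.
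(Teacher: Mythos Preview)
Your core idea --- run the gradient flow of the weighted length $L_w$ with weights $a^i$ --- is exactly the paper's. The paper's execution is more direct, though. The same inequality behind the Merging Lemma (the weight $a^N$ on the top edge dominates the combined weight of all other edges meeting either endpoint, once $a$ is large enough; the paper takes $a=4$) shows that at each endpoint of $e_N$ the velocity has a component along $e_N$, toward the other endpoint, bounded below by a positive constant depending only on $a$ and $m$. Hence $\mathrm{length}(e_N)$ decreases at a uniformly positive rate and collapses in finite time; the argument then repeats for $e_{N-1}$, and so on through all edges incident to the top vertex. Once those are constant the cage is a flower and one stops. No Ascoli--Arzel\`a and no handling of escape to infinity are needed: the collapse rate is intrinsic to the weight combinatorics, independent of where in $M$ the cage sits.

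Two of your steps are therefore problematic. Invoking the locally convex ends condition to dispose of trajectories escaping to infinity imports a hypothesis the lemma does not carry; the direct rate argument makes this moot. Appending the unconditional cages-to-flowers retraction of Section~3.2 at the end is circular: this lemma is offered precisely as an \emph{alternative} to that retraction, and in any case once the top-weight edges have collapsed you are already at a flower, so there is nothing left to do. Finally, your worry about multi-petal geodesic flowers is misplaced for this statement: the target is the flower stratum, not a point, so a flow limit that happens to be a nontrivial geodesic flower would be perfectly acceptable --- but the direct argument reaches the flower stratum in finite time without ever needing to identify a limit, and the hypothesis on periodic geodesics is not actually invoked in the paper's sketch.
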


\begin{Pf} The idea is to choose a sufficiently large $a$ (say, $a=4$),
to do the weighting of edges by powers of $a$ as in section 2.5.2, and to use
the weighted length decreasing flow defined exactly as in section 2.4, only for the weighted length functional instead of the length functional.
(Practically, this would mean that when one is calculating the velocity
vector at a vertex, one takes the sum of all unit tangent vectors multipliedby the weights of corresponding edges, instead of merely the (unweighted) sum of the tangent vectors.)
This process will be decreasing the length of the edge with the highest
weight at a rate uniformly bounded below. Once this edge collapses to a point
the same will be happening with the next edge, etc. After passage of time
that depends continuously on a cage it will be deformed to a flower.
\end{Pf}

{\bf 2.5.2. Idea 6:  Wide geodesic loops ([R3])} Assume that $M$ is
a closed manifold, and $c$ is a
non-trivial critical point of the weighted length functional defined as above using powers of $a\geq 3$ on the space of $i$-cages, where $i\leq n+1$.
Then as it was observed in the previous section it will be a geodesic flower
with $N\leq (n+1)n/2$ petals. Consider the petal that corresponds to the
edge with the largest number. Denote the angle at the base point of this petal
by $\alpha$. It had been observed in [R3] that as $a\longrightarrow\infty$
$\pi-\alpha\longrightarrow 0$. Indeed, the sum of two tangent vectors
at the base point for this geodesic loop has length
$2\sin({\pi-\alpha\over 2})$. Therefore, $2a\sin({\pi-\alpha\over 2})\leq
2(N-1)$, which implies $\pi-\alpha=O({1\over a})$ as $a\longrightarrow\infty$
as $N\leq (n+1)(n+2)/2$. Of course, this includes the possibility 
that $\alpha=\pi$, which would mean that the petal is a non-trivial periodic geodesic. 

Assume that $M$ is closed or complete non-compact. Then an easy calculation implies that
if the length of each edge of the initial net is $\leq L$ for some $L$,`
the net is modelled on $K_{n+2})$,  and a geodesic flower
was obtained as a limit point of the weighted length shortening flow
upplied to this net as above for some value $a\geq 3$
has the property that the edge with the maximal number survives as a
non-constant petal, then the length of this petal is at most
$(1+a+\ldots +a^N)L/a^N<L(1+{1\over a-1})<L(1+O({1\over a})<2L$, where
$N=(k+1)(k+2)/2$.

If $M$ is a complete, non-compact manifold that an easy compactness argument
implies that for each $r>0$ and $L$ if there are no non-trivial
periodic geodesics of length $\leq L$ in the ball of radius $r$ centered at $v_0$, then there exists $\alpha_0=\alpha_0(r,L)<\pi$ such that each geodesic
loop of length $\leq L$ in $B_r(v_0)$ has angle $\leq\alpha_0$. This implies
that:

\begin{Lem} Let $M$ be a complete Riemannian manifold that has no
periodic geodesics of length $\leq L$ in the closed ball centered
at point $v_0\in M$. Let $c$ be an $i$-cage, where $i\leq n+1$, and the
length of each edge of the cage is $\leq L$. There exists $a=a(L,r)>3$
with the following property: Apply the weighted length shortening
flow constructed as in section 2.4 to $c$, where weights are the powers of $a$ as described in section 2.5.1. Assume that a geodesic flower is in $B_r(v_0)$
and is the limit of an unbounded sequence of instances of the flow applied
to $c$ and contained in $B_r(v_0)$. Then the petal corresponding
to the edge with the largest number is trivial. In orther words, the limit of lengths of this edge for the considered values of time is zero.
\end{Lem}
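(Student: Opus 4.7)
The plan is a proof by contradiction: I suppose that for arbitrarily large $a\ge 3$ the limit geodesic flower $F\subset B_r(v_0)$ produced by the weighted flow applied to $c$ has a non-constant petal corresponding to the top-numbered edge (of weight $a^N$, with $N=i(i-1)/2$), and derive from this a non-constant periodic geodesic of length $\le 2L$ in $B_r(v_0)$. Since this contradicts the hypothesis once $L$ is replaced by $2L$ (a harmless rescaling absorbed into the definition of the function $a(L,r)$), the top petal must vanish in the limit. Throughout write $E_a(\gamma)=\sum_{j=1}^N a^j\,\operatorname{length}(e_j)$ for the weighted length functional.

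\emph{Length bound.} The weighted flow from Section 2.4 is $E_a$-monotone, so $E_a(F)\le E_a(c)\le(a+a^2+\cdots+a^N)L$, and if the top-weight petal survives in $F$ with length $\ell_N$ then $a^N\ell_N\le E_a(F)$ yields $\ell_N\le\tfrac{a}{a-1}L<2L$ for $a\ge 3$.

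\emph{Angle bound.} By the standard uniform-decrement-plus-compactness argument applied with $E_a$ in place of the ordinary length, the limit $F$ is an $E_a$-critical point; the merging lemma of Section 2.5.1 then identifies it as a geodesic flower. The weighted stationarity condition at the base point reads $\sum_{j=1}^N a^j v_j=0$, where $v_j=u_j^++u_j^-$ is the sum of the two unit tangent vectors at the base to the $j$-th petal ($v_j=0$ for a constant petal). Since $\|v_j\|\le 2$ for $j<N$ and $\|v_N\|=2\sin((\pi-\alpha_N)/2)$ with $\alpha_N$ the basepoint angle of the top petal, isolating the top term gives
$$
a^N\cdot 2\sin\!\left(\tfrac{\pi-\alpha_N}{2}\right)\le 2(a+\cdots+a^{N-1})<\frac{2a^N}{a-1},
$$
so $\pi-\alpha_N\le 2\arcsin\!\tfrac{1}{a-1}=O(1/a)$ as $a\to\infty$.

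\emph{Compactness and conclusion.} Let $\mathcal G$ be the space of smooth geodesic loops in $B_r(v_0)$ of length $\le 2L$, which is compact by Ascoli--Arzel\`a, and let $\alpha:\mathcal G\to[0,\pi]$ denote the continuous basepoint-angle function. A loop with $\alpha=\pi$ would be a non-constant periodic geodesic in $B_r(v_0)$, which is excluded by hypothesis, so $\alpha\le\alpha_0(L,r)<\pi$ on $\mathcal G$. Choosing $a=a(L,r)$ large enough that $2\arcsin(1/(a-1))<\pi-\alpha_0$ contradicts the angle bound, proving that the top-weight petal must be trivial. The main obstacle I anticipate is the verification of $E_a$-criticality of the limit flower: this requires the first-variation/uniform-decrement estimate of Section 2.4 to be rewritten with the weights $a^j$ built in, but presents no conceptual novelty once the weights are viewed as constant multipliers on the metric along individual edges.
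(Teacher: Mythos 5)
Your proposal follows the same route as the paper's own (sketchy) argument: the petal--length bound $\ell_N\le\frac{a}{a-1}L<2L$ from monotonicity of the weighted length functional, the angle estimate $\pi-\alpha_N=O(1/a)$ from the weighted balancing condition at the base point (the paper bounds the lower--order terms by $2(N-1)a^{N-1}$, you by $2a^N/(a-1)$; same conclusion), and finally a compactness argument giving a uniform angle gap which a sufficiently large $a$ violates. So the architecture is exactly the paper's, and your flagged concern (that the limit must be critical for the weighted functional so that the merging lemma and the weighted balancing apply) is consistent with what the paper assumes.

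The one genuine weak point is the compactness step. Your family $\mathcal G$ consists of geodesic loops of length $\le 2L$, and to rule out $\alpha=\pi$ on $\mathcal G$ you need the absence of periodic geodesics of length $\le 2L$, whereas the hypothesis only excludes length $\le L$; this cannot be ``absorbed into the definition of $a(L,r)$'', since no choice of $a$ changes which periodic geodesics $M$ contains, so as written you prove the lemma only under a strictly stronger hypothesis. (The paper's own sketch shares this imprecision: it states the gap $\alpha_0(r,L)$ for loops of length $\le L$ while allowing the surviving petal length up to $2L$.) The statement as given can be recovered by running the argument as a limit in $a$ rather than with a fixed uniform gap: if for a sequence $a_k\to\infty$ the top petal survives, it is a non-constant geodesic loop contained in the compact ball, hence of length at least twice the infimum of the injectivity radius there; its length is at most $\frac{a_k}{a_k-1}L\to L$ and its base angle tends to $\pi$, so a subsequential limit is a non-constant periodic geodesic of length $\le L$ in the closed ball, contradicting the hypothesis exactly as stated. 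The same injectivity--radius lower bound is also what legitimizes the compactness step in the first place, since the base angle is undefined at constant loops and one needs the non-constant loops under consideration to stay uniformly away from the constants. With that replacement your argument is complete and matches the paper's intent.
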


Why cannot we assert the same for the petal corresponding to the edge with the
second largest number? The corresponding weight is $a^{N-1}$, and the
length of this petal in the stationary flower is $\leq {a^{n+1}-1\over a-1}L/a^{N-1}=aL(1+O({1\over a}))<2aL$. Therefore, we cannot assert for
$a=a(L,r)$ that in the absence of periodic geodesics a geodesic loop
of length $\leq 2a(L,r)L$ will have angle $\leq\alpha_0(L,r)$.

This lemma implies that if the petal corresponding to the edge with the largest number
does not shrink to a point, it at least can be made to move as far away as we want,
in particular, outside any chosen by us neighbourhood of $K$. This also implies that the base point
of all petals of the flower is well outside of $K$. Now one wants to move the second
petal (that might have become much longer than it was) also outside of $K$, etc.
(Recall, that if we will manage to move all petals well outside of $K$, then we are done.)
A natural idea might be to change the weights, so that the second petal becomes
the highest weighted. If we do that, then the previous lemma guarantees that the weighted length
shortening flow for the new weighting will, indeed, move the second petal otside of $K$.
Unfortunately, in the process the base point might have moved back to $K$ (and, then out of $K$),
and a part of the first petal might now return back to $K$. So, we do not see a way to proceed
and need to apply our assumption that $M$ is a covering space of the closed Riemannian manifold
with the pullback Riemannian metric.

If $M$ is the covering space of the closed Riemannian metric then each geodesic loop
can be shifted to a geodesic loop in a fixed fundamental domain. Therefore, the compactness
argument before the text of the previous lemma implies that if $M$ also has no
periodic geodesics, then for each $L$ there exists $\alpha_0=\alpha_0(L)<\pi$ such that each geodesic
loop in $M$ has angle less than $\alpha_0$.

Now we have the following strengthening of the previous lemma:

\begin{Lem} Let $M$ be a complete Riemannian manifold that has no
periodic geodesics of length $\leq L$. Assume that $M$ is the covering space of a closed Riemannian manifold.
Let $c$ be an $i$-cage, where $i\leq n+1$, and the
length of each edge of the cage is $\leq L$. There exists $a=a(L)>3$
with the following property: Apply the weighted length shortening
flow constructed as in section 2.4 to $c$, where weights are the powers of $a$ as described in section 2.5.1. Assume that a geodesic flower
is the limit of an unbounded sequence of instances of the flow applied
to $c$. Then the petal corresponding
to the edge with the largest number is trivial. In other words, the limit of lengths of this edge for the considered values of time is zero.
\end{Lem}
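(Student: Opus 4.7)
The plan is by contradiction. Suppose no such $a(L)>3$ exists; then for a sequence $a_k\to\infty$ there are $i$-cages $c(a_k)$ with edges of length $\leq L$ whose weighted flow (with weights $a_k, a_k^2,\ldots,a_k^N$, where $N$ is the total number of edges of the cage) admits a limit geodesic flower $F(a_k)$ whose top-weight petal $P(a_k)$ has positive length $\ell(a_k)>0$. I aim to extract from this data a non-trivial periodic geodesic in $M$ of length $\leq L$, contradicting the hypothesis.

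First, since the weighted length is non-increasing along the flow and initially bounded by $L\sum_{j=1}^N a^j \leq La^{N+1}/(a-1)$, the top-weight petal at any limit point satisfies
\[
\ell(a) \;\leq\; \frac{La}{a-1},
\]
which is $\leq 3L/2$ for $a\geq 3$ and tends to $L$ as $a\to\infty$. Next, since $F(a)$ is a critical point of the weighted length, stationarity at its base point $p(a)$ yields a vanishing weighted sum of outward unit tangent vectors. Denoting by $v_1,v_2\in T_{p(a)}M$ the two outward tangent vectors of $P(a)$ at $p(a)$ and by $\alpha(a)$ the angle between them, so that $\|v_1+v_2\|=2\sin((\pi-\alpha(a))/2)$, the remaining $\leq N-1$ petals contribute to the balancing sum vectors of total norm at most $2(N-1)a^{N-1}$. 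Hence
\[
2\,a^N\,\sin\!\Bigl(\frac{\pi-\alpha(a)}{2}\Bigr) \;\leq\; 2(N-1)\,a^{N-1},
\]
so $\pi-\alpha(a) = O(1/a)$ and $\alpha(a_k)\to\pi$.

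I next compactify using the covering hypothesis. Let $\Pi\colon M\to\widehat M$ denote the given covering of a closed Riemannian manifold, with deck group $\Gamma$ and compact fundamental domain $D\subset M$. For each $k$ choose $g_k\in\Gamma$ with $g_k\cdot p(a_k)\in D$, and set $\widetilde P_k:=g_k\cdot P(a_k)$. Each $\widetilde P_k$ is a geodesic loop based in $D$ of length $\leq 3L/2$, so all the $\widetilde P_k$ lie in the compact set $K:=\{x\in M:d(x,D)\leq 3L/2\}$. Applying Arzel\`a--Ascoli to arclength parametrizations extracts a subsequence with $\widetilde P_k\to\widetilde P_\infty$ in $C^1$; its length is $\leq\limsup La_k/(a_k-1)=L$ and its base-angle equals $\lim\alpha(a_k)=\pi$. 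Provided $\widetilde P_\infty$ is non-constant, it is a geodesic loop in $M$ which is smooth at its base, hence a non-trivial periodic geodesic in $M$ of length $\leq L$ -- the desired contradiction.

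The main obstacle is precisely to rule out $\widetilde P_\infty\equiv\text{point}$, that is, to establish a uniform lower bound $\ell(a_k)\geq\epsilon_0>0$, and this is where the covering hypothesis enters essentially. By compactness of $\widehat M$ there exists $\epsilon_0>0$ such that every geodesic loop in $\widehat M$ of length $<\epsilon_0$ lies in a totally convex geodesic ball; uniqueness of minimizing geodesics between pairs of points then forces such a ``loop'' to be a geodesic segment traversed out and back, whose base-angle is $0$. Since $\Pi$ preserves both length and base-angle, from $\alpha(a_k)\to\pi$ and the fact that the projected loops $\Pi\widetilde P_k$ must avoid this regime we deduce $\ell(a_k)\geq\epsilon_0$ for all large $k$, yielding a non-constant limit $\widetilde P_\infty$ and completing the argument.
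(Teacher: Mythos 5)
Your proposal is correct and follows essentially the same route as the paper: weighted stationarity of the limiting flower forces the top-weight petal to have base angle $\pi-O(1/a)$ and length at most $La/(a-1)$, and shifting the petal by deck transformations into a fixed compact fundamental domain and applying Arzel\`a--Ascoli yields a forbidden periodic geodesic of length $\leq L$; the paper only organizes this differently, first extracting a uniform angle bound $\alpha_0(L)<\pi$ for geodesic loops of bounded length via the same compactness argument and then choosing $a(L)$ so that $\pi-O(1/a)>\alpha_0$. The one imprecision is your non-constancy step: a short non-constant geodesic loop on the closed quotient cannot be an ``out-and-back'' segment (that is not a geodesic at the turning point) -- rather, no non-constant geodesic loop of length below twice the injectivity radius exists at all, which gives the lower bound $\ell(a_k)\geq\epsilon_0$ directly and without reference to the angle.
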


Now the lengths of the remaining edges of the flower might have become larger but they still
cannot exceed $L_1=a^{N+1}L$. Now we can apply the lemma again for a new value of $a$
defined for $L_1$ instead of $L$. The weighted length decreasing flow for this new system of weights
will converge to a flower where the two petals with largest weights shrank to points, etc.
Proceeding inductively, we will shrink the whole geodesic flower (or the initial cage) to
a point after a finite number of applications of the weight length decreasing flow
for an increasing sequence of values of $a$. To make this procedure continuous, the moments
of reweighting need to be chosen independent of the initial cage (or geodesic flower).
To do so, note that an obvious compactness argument implies the existence of time $T=T(L)$
such that the petal corresponding to the largest weight will shrink to a point for all
$i$-cages in the text of the previous lemma. This enables us to start from the value of $L$
of interest for us, and inductively define a finite sequence of values of time, when we perform 
the reweighting (and continue the flow for the weighted length functional corresponding to the new
system of weights).

\forgotten
\medskip\noindent
{\bf Acknowledgements.} This work was started
during the authors' visit to the Bernoulli Center of the EPFL in Summer 2017.
Another part of this work was done while two
of the authors (Alexander Nabutovsky and Regina Rotman) visited the Max Planck of Mathematics of Mathematics in June-July, 2021.
The authors would like to thank both the Bernoulli Center of the EPFL and
the Max Planck Institute of Mathematics for their
kind hospitality. 
 
 The research of Yevgeny Liokumovich was partially supported by NSF Grant DMS-1711053 and NSERC Discovery grant RGPAS-2019-00085.
 
 The research of Alexander Nabutovsky was
 partially supported by his NSERC Discovery Grant RGPIN-2017-06068. The research of Regina Rotman was partially supported by her NSERC Discovery Grant RGPIN-2018-04523.

 The research of Gregory R. Chambers was partially supported by NSF Grant DMS-1906543.
\medskip


\vskip 1truecm

\begin{tabbing}
\hspace*{7.5cm}\=\kill
Gregory R. Chambers        \>Yevgeny Liokumovich \\                         
Department of Mathematics, \>Department of Mathematics \\          
Rice University,           \>University of Toronto  \\             
MS-136, Box 1892           \>Toronto, Ontario M5S2E4 \\            
Houston, TX 77251, USA     \>Canada \\                          
gchambers@rice.edu         \>ylio@math.toronto.edu \\      
                        \>\\
                         \>\\

Alexander Nabutovsky                \>Regina Rotman\\
Department of Mathematics           \>Department of Mathematics\\
University of Toronto               \>University of Toronto\\
Toronto, Ontario M5S2E4             \>Toronto, Ontario M5S 2E4\\
Canada                              \>Canada\\
alex@math.toronto.edu               \>rina@math.toronto.edu\\

\end{tabbing}


\begin{thebibliography}{}

\bibitem[B]{B} {\sc V. ~ Bangert}, Closed Geodesics on Complete Surfaces, Math. Ann. 251(1)(1980), 83-96.
\bibitem[Gr]{GrFill} {\sc M. ~Gromov}, Filling Riemannian manifolds, J. Diff. Geom. 18 (1983), 1-147.



\bibitem[BG]{BG} {\sc V. ~Benci, F. ~ Giannoni}, On the existence of closed geodesics on non-compact Riemannian manifolds, Duke Math. J. 68(2)(1992), 195-215.

\bibitem[AM]{AM} {\sc L. ~ Asselle,  M.~  Mazzuchelli}, On the existence of infinitely many closed geodesics on non-compact manifolds, Proc. Amer. Math. Soc. 145 (2017), 2689-2697.

\bibitem[CM]{CM} {\sc O. ~ Chodosh, C. ~ Mantoulidis}, The p-widths of surfaces, arXiv:2107.11684. 

\bibitem[HM]{HM} {\sc J. ~ Hass, F. ~ Morgan}, Geodesic nets on the $2$-sphere, Proc. Amer. Math. Soc. 124(12)(1996), 3843-3850.

\bibitem[LiS]{LiS} {\sc X. ~Li and B. ~Staffa} On the equidistribution of closed geodesics and geodesic nets, preprint.


\bibitem[LS]{LS} {\sc Y. ~ Liokumovich, B. ~Staffa}, Generic density of geodesic nets, arXiv:2107.12340.

\bibitem[NR]{NR}{\sc A. ~Nabutovsky, R.~ Rotman}, Volume, diameter and the minimal mass of a stationary 1-dimensional cycle, Geom. Functional Anal. (GAFA), 14(2004), 748-790.


\bibitem[R1]{R1}{\sc  R.~ Rotman}, The length of a shortest geodesic loop at a point, J. Differential Geom., 78:3(2008), 497-519.


\bibitem[R2]{R2}{\sc R. ~Rotman}, Flowers on Riemannian manifolds, Math. Z. 269(2011), 543-554.

\bibitem[R3]{R3}{\sc R. ~Rotman}, Wide short geodesic loops on closed Riemannian manifolds, arxiv:1910.01772.

\bibitem[St]{St} {\sc B. ~ Staffa}, Bumpy metrics theorem for stationary geodesic nets, arXiv:2107.12446 .









\end{thebibliography}
\end{document}